\definecolor{gr}{rgb}   {0.,   0.69,   0.23 }
\definecolor{bl}{rgb}   {0.,   0.5,   1. }
\definecolor{mg}{rgb}   {0.85,  0.,    0.85}
\definecolor{yl}{rgb}   {0.8,  0.7,   0.}
\definecolor{or}{rgb}  {0.7,0.2,0.2}
\newcommand{\mathbbm}[1]{\text{\usefont{U}{bbm}{m}{n}#1}}
\newcommand{\be}{\beta}
\newcommand{\id}{\mbox{Id}}
\newcommand{\1}{{\bf 1}}
\newcommand{\xiti}{\tilde{\xi}}
\newcommand{\etati}{\tilde{\eta}}
\newcommand{\lati}{\tilde{\la}}
\newcommand{\betati}{\tilde{\beta}}
\newcommand{\R}{\mathbb R}
\newcommand{\ca}{\mathcal A}
\newcommand{\cac}{\mathcal C}
\newcommand{\cd}{\mathcal D}
\newcommand{\ce}{\mathcal E}
\newcommand{\cf}{\mathcal F}
\newcommand{\cg}{\mathcal G}
\newcommand{\cj}{\mathcal J}
\newcommand{\cp}{\mathcal P}
\newcommand{\cw}{\mathcal W}
\newcommand{\al}{\alpha}
\newcommand{\ga}{\gamma}
\newcommand{\gga}{\Gamma}
\newcommand{\ka}{\kappa}
\newcommand{\la}{\lambda}
\newcommand{\si}{\sigma}
\newcommand{\vp}{\varphi}
\newtheorem{theorem}{Theorem}[section]
\newtheorem{corollary}[theorem]{Corollary}
\newtheorem{definition}[theorem]{Definition}
\newtheorem{lemma}[theorem]{Lemma}
\newtheorem{proposition}[theorem]{Proposition}
\theoremstyle{remark}
\newtheorem{remark}[theorem]{Remark}
    \pgfmathsetlength{\pgf@xb}{\pgfkeysvalueof{/pgf/outer xsep}}%
    \pgfmathsetlength{\pgf@yb}{\pgfkeysvalueof{/pgf/outer ysep}}%
\colorlet{symbols}{blue!90!black}
\colorlet{testcolor}{green!60!black}
\def\symbol#1{\textcolor{symbols}{#1}}
\def\1{\mathbf{\symbol{1}}}
\def\drawx{\draw[-,solid] (-3pt,-3pt) -- (3pt,3pt);\draw[-,solid] (-3pt,3pt) -- (3pt,-3pt);}
\tikzset{
	root/.style={circle,fill=testcolor,inner sep=0pt, minimum size=2mm},
	dot/.style={circle,fill=black,inner sep=0pt, minimum size=1mm},
	var/.style={circle,fill=black!10,draw=black,inner sep=0pt, minimum size=2mm},
	dotred/.style={circle,fill=black!50,inner sep=0pt, minimum size=2mm},
	generic/.style={semithick,shorten >=1pt,shorten <=1pt},
	dist/.style={ultra thick,draw=testcolor,shorten >=1pt,shorten <=1pt},
	testfcn/.style={ultra thick,testcolor,shorten >=1pt,shorten <=1pt,<-},
	testfcnx/.style={ultra thick,testcolor,shorten >=1pt,shorten <=1pt,<-,
		postaction={decorate,decoration={markings,mark=at position 0.6 with {\drawx}}}},
	kprime/.style={semithick,shorten >=1pt,shorten <=1pt,densely dashed,->},
	kprimex/.style={semithick,shorten >=1pt,shorten <=1pt,densely dashed,->,
		postaction={decorate,decoration={markings,mark=at position 0.4 with {\drawx}}}},
	kernel/.style={semithick,shorten >=1pt,shorten <=1pt,->},
	multx/.style={shorten >=1pt,shorten <=1pt,
		postaction={decorate,decoration={markings,mark=at position 0.5 with {\drawx}}}},
	kernelx/.style={semithick,shorten >=1pt,shorten <=1pt,->,
		postaction={decorate,decoration={markings,mark=at position 0.4 with {\drawx}}}},
	kernel1/.style={->,semithick,shorten >=1pt,shorten <=1pt,postaction={decorate,decoration={markings,mark=at position 0.45 with {\draw[-] (0,-0.1) -- (0,0.1);}}}},
	kernel2/.style={->,semithick,shorten >=1pt,shorten <=1pt,postaction={decorate,decoration={markings,mark=at position 0.45 with {\draw[-] (0.05,-0.1) -- (0.05,0.1);\draw[-] (-0.05,-0.1) -- (-0.05,0.1);}}}},
	kernelBig/.style={semithick,shorten >=1pt,shorten <=1pt,decorate, decoration={zigzag,amplitude=1.5pt,segment length = 3pt,pre length=2pt,post length=2pt}},
	rho/.style={dotted,semithick,shorten >=1pt,shorten <=1pt},
	renorm/.style={shape=circle,fill=white,inner sep=1pt},
	labl/.style={shape=rectangle,fill=white,inner sep=1pt},
	xi/.style={circle,fill=symbols!10,draw=symbols,inner sep=0pt,minimum size=1.2mm},
	xix/.style={crosscircle,fill=symbols!10,draw=symbols,inner sep=0pt,minimum size=1.2mm},
	xib/.style={circle,fill=symbols!10,draw=symbols,inner sep=0pt,minimum size=1.6mm},
	xibx/.style={crosscircle,fill=symbols!10,draw=symbols,inner sep=0pt,minimum size=1.6mm},
	not/.style={circle,fill=symbols,draw=symbols,inner sep=0pt,minimum size=0.5mm},
	>=stealth,
	}
\def\DeclareSymbol#1#2#3{\expandafter\gdef\csname MH@symb@#1\endcsname{\tikz[baseline=#2,scale=0.15,draw=symbols]{#3}}\expandafter\gdef\csname MH@symb@#1s\endcsname{\scalebox{0.7}{\tikz[baseline=#2,scale=0.15,draw=symbols]{#3}}}}
\def\<#1>{\csname MH@symb@#1\endcsname}
\newcommand{\luxor}{\mathbf{\Psi}}
\newcommand{\luxo}{\mathbf{\Psi}^{\mathbf{1}}}
\newcommand{\cherry}{\mathbf{\Psi}^{\mathbf{2}}}
\title{A nonlinear  Schr\"{o}dinger equation with fractional noise}
\numberwithin{equation}{section}
\begin{document}

\subjclass[2000]{60H15; 35Q55; 60G22}    

\keywords{Stochastic Schr\"odinger equation, space-time  fractional  noise, Wick renormalization.}


\author{Aur{\'e}lien Deya}
\author{Nicolas Schaeffer}
\author{Laurent Thomann}
\address{Universit{\'e} de Lorraine, CNRS, IECL, F-54000 Nancy, France}
\email{aurelien.deya@univ-lorraine.fr}
\email{nicolas.schaeffer@univ-lorraine.fr}
\email{laurent.thomann@univ-lorraine.fr}

\begin{abstract}
We study a stochastic  Schr\"{o}dinger equation with a quadratic nonlinearity and a space-time fractional perturbation, in space dimension less than 3.  When the Hurst index is large enough, we prove local well-posedness of the problem using classical arguments. However, for a small Hurst index, even the interpretation of the equation needs some care. In this case, a renormalization procedure must come into the picture, leading to a Wick-type interpretation of the model. Our fixed-point argument then involves some specific regularization properties of the Schr\"{o}dinger group, which allows us to cope with the strong irregularity of the solution.
\end{abstract}

\maketitle

\

\section{Introduction and main results}\label{sec:intro}

\subsection{General introduction}
In this paper we study  the following $d$-dimensional stochastic     Schr\"{o}dinger equation with a quadratic nonlinearity and a space-time fractional perturbation:
\begin{equation}\label{equa-abstract}
\left\{
\begin{array}{l}
\imath \partial_t u-\Delta u= \rho^2 |u|^2 + \dot{B} \, , \quad  t\in [0,T] \, , \, x\in \R^d \, ,\\
u_0=\phi\, ,
\end{array}
\right.
\end{equation}
where  $\rho:\R^d\to \R$ is a smooth cut-off function in space and $\dot{B}$ stands for the derivative of a space-time fractional Brownian motion of Hurst index $H=(H_0,\ldots,H_d)\in (0,1)^{d+1}$.

\smallskip

We first show that, when $2H_0+\sum_{i=1}^{d}H_i >d+1$ (the so-called \emph{regular} case), the interpretation and local well-posedness of~\eqref{equa-abstract} can be derived from quite direct arguments, based on a first-order expansion and the use of Strichartz inequalities. 

\smallskip

The equation behaves less favorably when $2H_0+\sum_{i=1}^{d}H_i \leq d+1$  (the irregular or \emph{rough} case). In this situation, we first need a Wick-type  renormalization procedure in order to interpret the model. The fixed-point argument then relies on the smoothing properties of the Schr\"{o}dinger equation, and in particular on its local regularization effect.  

\medskip

We can (loosely) sum up our results as follows. 
 
\begin{theorem} 
Assume that $1\leq d \leq 3$ and set $\alpha_H:=\big(2H_0+\sum_{i=1}^{d}H_i\big)-(d+1)$. The following (partial) picture holds true:

\smallskip

\noindent
$(i)$ \textbf{Case $\alpha_H >0$.} The equation~\eqref{equa-abstract} is almost surely locally well-posed in $H^{\beta}(\R^d)$ for some $\beta>0$. 

\smallskip

\noindent
$(ii)$ \textbf{Case $\alpha_H \leq 0$.}  There exists $\alpha_d<0$ such that if $\alpha_H >\alpha_d$   then the equation~\eqref{equa-abstract}   can be interpreted in the Wick (renormalized) sense and  it is     almost surely locally well-posed in $H^{-\beta}(\R^d)$ for some $\beta>0$. 
\end{theorem}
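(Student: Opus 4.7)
My plan in both regimes is a Da Prato--Debussche style decomposition: write $u=\luxo+v$ in the regular case and $u=\luxo+\cherry+v$ in the rough case, where $\luxo$ is the mild solution of the linear problem
\[
\imath\partial_t\luxo-\Delta\luxo=\dot{B},\qquad \luxo(0)=0,
\]
and $\cherry$ is a suitable (Wick-renormalized) second-order stochastic object. In either case, this reduces~\eqref{equa-abstract} to a deterministic fixed-point equation for the remainder $v$, with random inputs that can be pre-constructed on a full-measure set.

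\textbf{The regular case $\alpha_H>0$.} I would first estimate the covariance of $\luxo$ on the Fourier side. The condition $2H_0+\sum_i H_i >d+1$ is precisely what makes $\mathbb{E}[\|\luxo(t)\|_{H^\beta}^2]<\infty$ for some $\beta>0$, and a Kolmogorov continuity argument then puts $\luxo$ in $C([0,T];H^\beta(\R^d))$ almost surely. The remainder $v:=u-\luxo$ satisfies
\[
\imath\partial_t v-\Delta v=\rho^2|v+\luxo|^2,\qquad v(0)=\phi,
\]
which is now a standard quadratic Schr\"odinger equation with a random but sufficiently regular forcing. Since $d\leq 3$ and the space cut-off $\rho$ controls the support of the nonlinearity, a contraction argument based on classical Strichartz inequalities should close the fixed point in a Strichartz-type space at regularity $H^\beta$.

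\textbf{The rough case $\alpha_d<\alpha_H\leq 0$.} Now $\luxo$ only lives in $H^{-\beta}$ for some $\beta>0$, so $|\luxo|^2$ is no longer a well-defined function. I would replace $\dot{B}$ by a smooth regularization $\dot{B}^\ep$ and subtract the diverging constant $c_\ep:=\mathbb{E}[|\luxo^\ep(t,x)|^2]$. The renormalized second-order object
\[
\cherry^\ep(t):=-\imath\int_0^t e^{-\imath(t-s)\Delta}\bigl[\rho^2\bigl(|\luxo^\ep(s)|^2-c_\ep\bigr)\bigr]\,ds
\]
is shown, via a second-chaos covariance computation, to converge as $\ep\to 0$ in $C([0,T];H^{-\beta'}(\R^d))$ for some $\beta'<\beta$. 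This is where the gap $\alpha_H-\alpha_d>0$ is consumed: the gain over the naive regularity $H^{-2\beta}$ of the formal product $|\luxo^\ep|^2$ comes from the local smoothing / regularization effect of the Schr\"odinger propagator, which transmutes the temporal and spatial oscillations of $\dot{B}$ into extra derivatives. Setting $u^\ep=\luxo^\ep+\cherry^\ep+v^\ep$, the remainder $v^\ep$ satisfies schematically
\[
\imath\partial_t v^\ep-\Delta v^\ep=\rho^2\bigl(|v^\ep+\cherry^\ep|^2+2\mathrm{Re}\bigl(\luxo^\ep\,\overline{(v^\ep+\cherry^\ep)}\bigr)\bigr),
\]
in which every dangerous product $\luxo^\ep\cdot\overline{\luxo^\ep}$ has been absorbed into $\cherry^\ep$. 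A contraction argument for $v^\ep$ in a function space built on Schr\"odinger smoothing norms, uniform in $\ep$, then produces in the limit a solution $u\in H^{-\beta}$.

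\textbf{The main obstacle.} Part~(i) is essentially routine. The genuine difficulties both sit in~(ii). The first is stochastic: proving the convergence of $\cherry^\ep$ in a space of negative Sobolev regularity sharp enough to be compatible with the fixed point requires a careful chaos decomposition and exploitation of the oscillatory factor $e^{-\imath(t-s)\Delta}$ inside the covariance. The second, and probably the hardest, is analytic: the equation for $v^\ep$ must be solved at a regularity strictly above that of $\luxo$ and $\cherry$, so neither Strichartz nor energy estimates alone can close the loop; I would lean on the local-in-space smoothing effect of the Schr\"odinger group (in the spirit of Kenig--Ponce--Vega) to recover the missing derivatives, and then balance this gain carefully against the regularity loss produced by multiplication of $v^\ep$ with the very rough stochastic data $\luxo^\ep$ and $\cherry^\ep$.
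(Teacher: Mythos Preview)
Your treatment of the regular case~(i) matches the paper exactly: first-order Da~Prato--Debussche decomposition $u=\<Psi>+v$, with $\<Psi>$ shown to lie in $H^\beta$ via a covariance/Kolmogorov argument, and a Strichartz contraction for $v$.

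In the rough case~(ii), however, your scheme diverges from the paper's in a significant way. The paper \emph{does not} perform a second-order expansion $u=\<Psi>+\cherry+v$. It keeps the first-order ansatz $u=\<Psi>+v$, and the Wick-renormalized square $\<Psi2>:=\lim_n\big(|\<Psi>_n|^2-\sigma_n\big)$ enters only as a forcing term in the equation for $v$ (equation~\eqref{equa-v-rough}). The stochastic step is to construct $\rho^2\<Psi2>$ directly in $\mathcal C_T\mathcal W^{-2\alpha,p}$ via a second-chaos computation (Proposition~\ref{sto1}); no Duhamel operator or smoothing is used at this stage. The deterministic fixed point is then run for $v$ in the space $X^{\alpha,\kappa,(p,q)}_\rho(T)=\mathcal C_TH^{-2\alpha}\cap L^p_T\mathcal W^{-2\alpha,q}\cap L^{1/\kappa}_TH^{-2\alpha+\kappa}_\rho$, which is a \emph{negative} Sobolev space, not a positive one. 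The local smoothing (Constantin--Saut, Lemma~\ref{lem:loc-regu}) is what lifts $v$ from $H^{-2\alpha}$ to $H^{-2\alpha+\kappa}_\rho$ with $\kappa>3\alpha$, and that marginal positive local regularity is just enough to make the product $(\rho v)\cdot(\rho\<Psi>)$ well-defined via Lemma~\ref{lem:product}. A commutator estimate (Lemma~\ref{lem:commut}) is needed to pass between the local norm $H^s_\rho$ and the global one.

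Your second-order ansatz buys nothing here and in fact creates extra work: the equation for your $v^\ep$ contains the products $|\cherry^\ep|^2$ and $\luxo^\ep\cdot\overline{\cherry^\ep}$, neither of which appears in the paper's formulation. To close your loop you would need either to show $\cherry$ has enough positive regularity to absorb these (plausible but delicate, since the Constantin--Saut gain is capped at $1/2$), or to build them as further stochastic objects. The paper explicitly flags (Remark~\ref{rk:limitations} and the remark following it) that pushing to a genuine second-order expansion in the Schr\"odinger setting is obstructed by the limited smoothing available. Your identification of local smoothing as the decisive analytic tool is correct; the point you miss is that it is deployed on $v$ inside a first-order scheme, not used to upgrade a second stochastic tree.
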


We refer to  Definition~\ref{defi:sol-regu} and Theorem~\ref{resu} for precise statements in the regular case $(i)$, and to  Definition~\ref{defi:sol} and Theorem~\ref{resu1} in the rough case $(ii)$. To our knowledge, it is the first result in the context of  nonlinear Schr{\"o}dinger equations where   both renormalization arguments and local regularization properties   are used  to control  such an irregular noise (in Sobolev spaces of negative order).  \medskip

\

The stochastic Schr{\"o}dinger equation is a widely studied model in the SPDE literature. Just as for stochastic heat or wave equations, the stochastic Schr{\"o}dinger model admits numerous possible variants and is known to be the source of many challenging questions, whose treatment can only be achieved through the sophisticated combination of PDE tools with probabilistic considerations. Let us here mention some major contributions of  de Bouard, Debussche and their coauthors, see e.g.~\cite{debou-debu-4,debou-debu-6,debou-debu-8,debu-weber,debu-martin} and references therein, as well as  the recent works by Oh and his coauthors on this topic, see e.g.~\cite{oh-forlano-wang,oh-okamoto,oh-popovnicu-wang,oh-tzvetkov}.

\

In this paper, we propose to make another step in this direction, by considering the model of a nonlinear Schr\"{o}dinger equation with quadratic perturbation and forcing term given by a space-time fractional noise. To be more specific, the dynamics we will focus on can be described as follows:
\begin{equation}\label{dim-d}
\left\{
\begin{array}{l}
\imath \partial_t u-\Delta u= \rho^2 |u|^2 + \dot{B}\, , \quad  t\in [0,T] \, , \, x\in \R^d \, ,\\
u_0=\phi\, ,
\end{array}
\right.
\end{equation}
where 

\smallskip

\noindent
$\bullet$ $\rho:\R^d \to \R$ is a smooth compactly-supported function of the space variable, allowing us to bring the analysis back to compact domains, 

\smallskip

\noindent
$\bullet$ $\phi$ is a deterministic initial condition, the regularity of which will be specified later on,

\smallskip

\noindent
$\bullet$ one has $\dot{B}:=\partial_t \partial_{x_1}\cdots\partial_{x_d}B$, that is $\dot{B}$ is the space-time derivative (in the sense of distributions) of $B$, with $B$ a \emph{fractional sheet} of  Hurst index $H=(H_0,H_1,\dots,H_d) \in (0,1)^{d+1}$ (see Definition~\ref{def:fbm} below for details).

\

The consideration of such a fractional noise $\dot{B}$ will be the main specificity of our analysis. Observe that when $H_0=\frac12$, $\dot{B}$ is a white noise in time, and the situation in this case can be compared with the models studied e.g. in~\cite{debou-debu-4,oh-okamoto,oh-popovnicu-wang}. Our aim in the sequel will be to offer as much flexibility as possible regarding the choice of the Hurst index $H\in (0,1)^{d+1}$. Thus, for a (hopefully) large range of such indexes, we intend to at least prove local well-posedness of equation~\eqref{dim-d}. Note that this objective was already at the core of the investigations of the first author in~\cite{deya-wave,deya-wave-2} for a quadratic fractional wave equation (extending the white-noise model studied in~\cite{gubi-koch-oh}).  

\

Before we go further, let us recall that over the last decade, tremendous developments have been observed in the field of singular stochastic PDEs. This progress has been especially prominent in the parabolic (SPDE) setting, with the introduction of the theory of regularity structures~\cite{hai-14} or the paracontrolled approach~\cite{gubi-imke-perk}. Among other novelties, those theories provide a convenient framework towards renormalization procedures, thus paving the way to a rigorous treatment of many long-standing problems. If one focuses on additive-noise models only (in the vein of~\eqref{dim-d}), let us quote for instance, among a flourishing literature,  the work of Catellier and Chouk~\cite{cate-chouk} about the stochastic quantization equation on the three-dimensional torus
\begin{equation}\label{stoch-quant-equa}
\partial_t u-\Delta u=- u^3 +\xi \, , \quad  t\in [0,T] \, , \, x\in \mathbb{T}^3 \, ,
\end{equation}
or the study by Hairer and Shen~\cite{hairer-shen} about the parabolic sine-Gordon model
\begin{equation}\label{sine-gordon}
\partial_t u +\frac{1}{2}(1-\Delta) u +\sin(\be u)=\xi \, , \quad  t\in [0,T] \, , \, x\in \mathbb{T}^2 \, ,
\end{equation}
with $\xi$ a space-time white noise. 

\smallskip

Unfortunately, the application of those new groundbreaking approaches beyond the parabolic setting has proved to be very limited so far (this holds true for both regularity structures and paracontrolled theories). To our knowledge, the only attempt to extend such a strategy to a non-parabolic setting is due to Gubinelli, Koch and Oh in their recent work~\cite{gubi-koch-oh-2}, dealing with a stochastic wave model. In particular, we are not aware of any similar extension to a singular stochastic Schr{\"o}dinger equation. 

\

As regards the deterministic Schr\"odinger equation with polynomial nonlinearities, its well-posedness in positive Sobolev spaces was established long ago using Strichartz estimates (see \cite{GinibreVelo, CazWei} and also the monography \cite{caz}). More recent developments, applying in particular to NLS with quadratic nonlinearities, also led to well-posedness results for the model in negative Sobolev spaces, thanks to subtle bilinear estimates in the so-called Bourgain spaces (see \cite{Co-De-Ke-Sta, Tao, Beje-DeSilva,Iwa-Oga}, and also Remark \ref{rk:bourgain-spaces} below). 

\

With this background in mind, let us now go back to the analysis of equation~\eqref{dim-d}. The starting point of the study will be the mild formulation of~\eqref{dim-d}, that is the equation
\begin{equation}\label{mild-equation-u}
u_t=S_t \phi -\imath\int_0^t S_{t-\tau}(\rho^2 |u_{\tau}|^2)\, d\tau +\<Psi>_t \, ,
\end{equation}
where $S$ stands for the Schr\"{o}dinger group, and where we have set 
\begin{equation}\label{def-luxo-formal}
\<Psi>_t:=-\imath\int_0^t S_{t-\tau}(\dot B_{\tau})\, d\tau \, .
\end{equation}
Note that $\<Psi>$ can also be seen as the solution of the following \enquote{linear} counterpart of~\eqref{dim-d}:
\begin{equation}\label{linear-equation}
 \left\{
    \begin{array}{ll}
  \imath\partial_t\<Psi>-\Delta \<Psi> = \dot B, \hskip 0.3 cm t \in [0,T], \hspace{0,2cm} x \in \mathbb{R}^d,   \\
 \  \<Psi>(0,.)=0\, .
    \end{array}
\right.
\end{equation}
For this reason, we will henceforth refer to $\<Psi>$ as the linear solution of the problem. 

\smallskip

A first essential part of our work will be to give a precise meaning to both definition~\eqref{def-luxo-formal} and equation~\eqref{mild-equation-u}. In order to initiate this analysis, let us proceed with a standard transformation of the problem, by considering the equation satisfied by the process $v:=u-\<Psi>$, that is the equation (which is still formal at this point)
\begin{multline}\label{equa-v-intro}
v_t=S_t(\phi)-\imath\int_0^t S_{t-\tau}(\rho^2 |v_{\tau}|^2)\, d\tau-\imath\int_0^t S_{t-\tau}((\rho \overline{v}_{\tau})\cdot(\rho \<Psi>_{\tau}))\, d\tau\\
-\imath\int_0^t S_{t-\tau}((\rho v_{\tau})\cdot(\overline{\rho \<Psi>_{\tau}}))\, d\tau-\imath\int_0^t S_{t-\tau}( |\rho\<Psi>_{\tau}|^2)\, d\tau \, .
\end{multline}
The main idea behind this transition from $u$ to $v$ is that $\<Psi>$ is expected to behave as some \enquote{first-order expansion} of $u$. In other words, due to the specific properties of the group $S$ (which we will detail later on), we expect the process $v$ to be more regular than $u$ and $\<Psi>$, making equation~\eqref{equa-v-intro} more tractable than~\eqref{mild-equation-u}. Following this idea, we will focus our subsequent investigations on equation~\eqref{equa-v-intro}.

\

Let us now elaborate on the successive steps of our reasoning, and introduce our main results. Note that these steps are overall similar to those developed in~\cite{deya-wave,deya-wave-2,gubi-koch-oh} for the corresponding quadratic wave model. Nevertheless, when going into the technical details, some new fundamental difficulties arise in the analysis of the Schr{\"o}dinger model, as we will try to highlight it in the  presentation below.

\subsection{Step 1: Study of the linear equation}\label{subsec:intro-linear}

\

\smallskip

Recall that the noise $\dot{B}$ involved in~\eqref{dim-d} is defined as the derivative of a fractional sheet $B$, which is a non-differentiable process (in the standard sense). Consequently, just as the white noise $\xi$ in~\eqref{stoch-quant-equa}-\eqref{sine-gordon}, $\dot{B}$ can only be understood as some random negative-order distribution, and thus the interpretation of the convolution in~\eqref{def-luxo-formal} requires some clarification.

\smallskip

To do so, let us start from a smooth approximation $(B_n)_{n\geq 0}$ of $B$, that is, for each fixed $n\geq 0$, $(t,x)\mapsto B_n(t,x)$ is a.s. smooth, and $B_n(t,x) \stackrel{n\to\infty}{\longrightarrow} B(t,x)$ for every $(t,x)\in [0,T]\times \R^d$ (the choice of such an approximation process will be specified in Section~\ref{stocha} below). Then consider the corresponding sequence of linear solutions, that is the sequence $\<Psi>_n$ of solutions to the equation
\begin{equation}\label{linear-equation-n}
 \left\{
    \begin{array}{ll}
  \imath\partial_t\<Psi>_n-\Delta \<Psi>_n = \dot B_n\, , \quad t \in [0,T]\, , \ x \in \mathbb{R}^d\, ,   \\
 \  \<Psi>_n(0,.)=0\, ,
    \end{array}
\right.
\end{equation}
where $\dot{B}_n:=\partial_t\partial_{x_1}\cdots \partial_{x_d}B_n$. For every fixed $n\geq 0$, the smoothness of $B_n$ (and accordingly the smoothness of $\dot{B}_n$) makes the analysis of~\eqref{linear-equation-n} considerably easier than the one of~\eqref{linear-equation}, and readily allows us to define a unique Gaussian solution process $\big\{\<Psi>_n(t,x), \, t\in [0,T],x\in \R^d\big\}$ (see Definition~\ref{defi:luxo-n}). 

\smallskip

The solution $\<Psi>$ of the rough equation~\eqref{linear-equation} is then interpreted through the following convergence result, which can be seen as our first main contribution:

\begin{proposition}\label{sto}
Let $d\geq 1$ and fix $(H_0,\ldots,H_d)\in (0,1)^{d+1}$. Let $(B_n)_{n\geq 0}$ be the sequence of smooth processes defined by formula~\eqref{defi:approx-b-n}, and let $\<Psi>_n$ be the solution of~\eqref{linear-equation-n} associated with $B_n$.

\smallskip

Then, for every test function $\chi:\mathbb{R}^d$ $\rightarrow$ $\mathbb{R}$ (i.e., smooth and compactly-supported), the sequence $(\chi \<Psi>_n)_{n  \geq 0}$ converges almost surely in the space $\mathcal{C}([0,T]; \mathcal{W}^{-\al,p}(\mathbb{R}^d))$, for all $2\leq p\leq \infty$ and
\begin{equation}\label{assump-gene-al}
\alpha > d+1-\bigg(2H_0+\sum_{i=1}^{d}H_i\bigg)\, .
\end{equation}
We denote the limit of this sequence by  $\chi \<Psi>$.
\end{proposition}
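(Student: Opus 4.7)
The plan is to prove that $(\chi\<Psi>_n)_{n\geq 0}$ is Cauchy in a Banach space embedding into $\mathcal{C}([0,T];\mathcal{W}^{-\alpha,p}(\R^d))$, exploiting that each $\<Psi>_n$ is a Gaussian field (as a linear functional of the underlying fractional sheet $B$). Gaussian hypercontractivity then controls every $L^q(\Omega)$-moment of a fixed linear quantity by its variance, reducing the whole analysis to second-moment computations. To cover the full range $2\leq p\leq\infty$ uniformly, I would embed the target space into a larger Besov space $\mathcal{B}^{-\alpha+\epsilon}_{r,r}(\R^d)$ for $r$ large and $\epsilon>0$ small, so that the Besov norm decomposes via Fubini as $\sum_N N^{-(\alpha-\epsilon)r}\int|P_N(\chi\<Psi>_n)(t,x)|^r\,dx$ over dyadic Littlewood--Paley projectors $P_N$.

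The core of the argument is a frequency-localized variance estimate. Using the spectral representation
\[
\<Psi>_n(t,x)=-\imath\int_{\R\times\R^d}e^{\imath\xi\cdot x+\imath t|\xi|^2}\,\frac{e^{\imath t(\tau-|\xi|^2)}-1}{\imath(\tau-|\xi|^2)}\,\sigma_n(\tau,\xi)\,W(d\tau,d\xi)\, ,
\]
where $\sigma_n\to\sigma(\tau,\xi):=|\tau|^{\frac{1-2H_0}{2}}\prod_{i=1}^d|\xi_i|^{\frac{1-2H_i}{2}}$ and $W$ is the complex Gaussian white noise underlying $B$, It\^o's isometry yields
\[
\mathbb{E}\big|\<Psi>_n(t,x)\big|^2=\int_{\R\times\R^d}\frac{4\sin^2\!\big(t(\tau-|\xi|^2)/2\big)}{(\tau-|\xi|^2)^2}\,|\sigma_n(\tau,\xi)|^2\,d\tau\,d\xi\, .
\]
The Fej\'er-type kernel in $\tau$ concentrates on the resonance $\tau\sim|\xi|^2$ and integrates to a constant multiple of $t$, so integration against the weight $|\tau|^{1-2H_0}$ contributes a factor $|\xi|^{2-4H_0}$ for $|\xi|\sim N$ large. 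Combined with the spatial weight $\prod_i|\xi_i|^{1-2H_i}$ and dyadic averaging on the shell $|\xi|\sim N$, this gives the key bound
\[
\mathbb{E}\big|P_N(\chi\<Psi>_n)(t,x)\big|^2\lesssim t\cdot N^{2\alpha_0}\, ,\quad\alpha_0:=d+1-\Big(2H_0+\sum_{i=1}^d H_i\Big)\, ,
\]
whose weighting by $N^{-2\alpha}$ is summable precisely under condition~\eqref{assump-gene-al}.

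Applying hypercontractivity pointwise in $x$ to pass from $L^2(\Omega)$ to $L^r(\Omega)$, then integrating over the compact support of $\chi$ and summing over dyadic blocks, promotes the variance bound to a uniform control $\sup_n\mathbb{E}\|\chi\<Psi>_n(t)\|^r_{\mathcal{B}^{-\alpha+\epsilon}_{r,r}}<\infty$. The same scheme applied to $\chi\<Psi>_n-\chi\<Psi>_m$ (with $|\sigma_n-\sigma_m|^2$ replacing $|\sigma_n|^2$) delivers a quantitative Cauchy property, with a polynomial rate in $\min(n,m)$. Time continuity comes from an analogous estimate for the increment $\<Psi>_n(t)-\<Psi>_n(s)$: the additional factor $|e^{\imath(t-s)|\xi|^2}-1|\lesssim\min(|t-s||\xi|^2,1)$ allows one to trade a small fraction of the spatial regularity against H\"older regularity in $t$, after which Kolmogorov's continuity criterion applies. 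A Borel--Cantelli extraction along a dyadic subsequence finally promotes the $L^q(\Omega)$-Cauchy property to almost-sure convergence in $\mathcal{C}([0,T];\mathcal{W}^{-\alpha,p}(\R^d))$.

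The main technical obstacle is the careful analysis of the $(\tau,\xi)$-integral: the resonant regime $\tau\sim|\xi|^2$ has to be isolated from the non-resonant tails, and the anisotropic weight $\prod_i|\xi_i|^{1-2H_i}$ must be dyadically averaged in a way that depends genuinely on $d$ and on whether each $H_i$ lies below or above $1/2$. Low-frequency divergences of $|\xi_i|^{1-2H_i}$ for $H_i<1/2$ are tamed by the compactly-supported cut-off $\chi$, whose smooth Fourier transform provides the necessary spatial localization. A further subtlety is ensuring that the approximations $\sigma_n$ converge to $\sigma$ with enough uniform control to produce a quantitative Cauchy rate on every dyadic shell.
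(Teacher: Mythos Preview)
Your proposal is correct and follows essentially the same strategy as the paper: reduce to second-moment estimates by Gaussian hypercontractivity, control the variance via the spectral (harmonizable) representation of $\<Psi>_n$, extract the key scaling exponent $\alpha_0=d+1-(2H_0+\sum_i H_i)$ from the resonant $\tau$-integral, and conclude with Kolmogorov's criterion and Borel--Cantelli.

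The only genuine difference is packaging. You organize the spatial regularity through a Littlewood--Paley/Besov decomposition and sum $N^{-2\alpha}\,\mathbb{E}|P_N(\chi\<Psi>_n)|^2$ over dyadic shells; the paper instead works directly with the $\mathcal{W}^{-\alpha,2p}$ norm, using a technical lemma (their Lemma~\ref{lem:handle-chi-correc}) to absorb the cut-off $\widehat{\chi}$ into a clean weight $\{1+|\eta|^2\}^{-\alpha}$, followed by a hyperspherical change of variables. Your Fej\'er-kernel heuristic for the $\tau$-integral is precisely what the paper makes rigorous in its Lemma~\ref{lem} and Corollary~\ref{tec}, which give the sharp bound $\int_{\R}|\gamma_{s,t}(\xi,r)|^2|\xi|^{1-2H_0}\,d\xi\lesssim |t-s|^{2\kappa}(1+r^{4(H_0-\kappa)-2-2\varepsilon})^{-1}$; this is where most of the work lies, and your proposal correctly identifies it as the main technical obstacle. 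The Besov route is arguably cleaner for handling the $p=\infty$ endpoint in one shot, whereas the paper treats $2\leq p<\infty$ first and recovers $p=\infty$ by a final Sobolev embedding.

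One small slip: the low-frequency singularity of $|\eta_i|^{1-2H_i}$ occurs when $H_i>\tfrac12$, not $H_i<\tfrac12$; in either case it is integrable near $0$ and causes no real difficulty (the paper simply checks $\int_0^1 r^{2d-1-2\sum_i H_i}\,dr<\infty$).
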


\smallskip

The proof of this convergence result relies on the stochastic properties of $\dot{B}$, and will be developed in Section~\ref{subsec:proof-sto-1}. As the reader might expect it, the resulting regularity property (i.e., the fact that $\chi\<Psi>\in \mathcal{C}([0,T]; \mathcal{W}^{-\al,p}(\mathbb{R}^d))$, for every $\al$ satisfying~\eqref{assump-gene-al}) will be of crucial importance in the analysis of~\eqref{equa-v-intro}.

\smallskip

Using a standard patching argument, the limit elements $\{\chi\<Psi>, \ \chi\in \cac_c^\infty(\R^d)\}$ provided by Proposition~\ref{sto} can then be merged into a single locally-controlled distribution $\<Psi>$ (see Proposition~\ref{prop:defi-psi-glo}), which we will refer to in the sequel.

\smallskip

\begin{remark}\label{rk:compari-lin}
We can compare the above regularity restriction~\eqref{assump-gene-al} for $\<Psi>$ with the corresponding result of~\cite{deya-wave} in the wave setting, that is when replacing $\imath\partial_t\<Psi>-\Delta \<Psi>$ with $\partial^2_t\<Psi>-\Delta \<Psi>$ in~\eqref{linear-equation}. In the latter situation, and according to~\cite[Proposition 1.2]{deya-wave}, one must have 
$$
\alpha_{\text{wave}} > d-\frac12-\sum_{i=0}^{d}H_i\, .
$$
In light of this result, it is interesting to see how the Schr{\"o}dinger scaling (where the time variable somehow \enquote{counts twice}) echoes on condition~\eqref{assump-gene-al}, through the combination $2H_0+\sum_{i=1}^{d}H_i$.\\
\indent Besides, although such a property cannot be found in the existing literature, we could show along the same pattern that in the heat situation (that is with $\partial_t\<Psi>-\Delta \<Psi>$ instead of $\imath\partial_t\<Psi>-\Delta \<Psi>$ in~\eqref{linear-equation}), the restriction for $\al$ becomes
$$
\alpha_{\text{heat}} > d-\bigg(2H_0+\sum_{i=1}^{d}H_i\bigg)\, ,
$$
which, compared to~\eqref{assump-gene-al}, reflects the stronger regularization properties of the heat kernel.
\end{remark}

\smallskip

\subsection{Step 2: Interpretation of the main equation}

\

\smallskip

Now equipped with a proper definition of $\<Psi>$ (as well as a sharp control on its regularity), we can go back to our interpretation issue for the main equation~\eqref{mild-equation-u} (or equivalently~\eqref{equa-v-intro}). In fact, according to the result of Proposition~\ref{sto}, two cases need to be distinguished.

\subsubsection{The regular case}

\

\smallskip

In the sequel, we will call \emph{regular case} the situation where 
\begin{equation}\label{cond-regu-case}\tag{\textbf{H1}}
2H_0+\sum_{i=1}^{d}H_i >d+1 \, .
\end{equation}
In this case, we can pick $\al <0$ such that condition~\eqref{assump-gene-al} is satisfied, and therefore, thanks to Proposition~\ref{sto}, we are allowed to consider every element $\chi \<Psi>$ ($\chi\in \cac^\infty_c(\R^d)$) as a function \emph{of both time and space variables} (in an almost sure way). As a result, the square element $|\rho \<Psi>|^2$ involved in~\eqref{equa-v-intro} simply makes sense as a standard pointwise product of functions. This immediately leads us to the following direct interpretation of the equation: 

\begin{definition}\label{defi:sol-regu}
Fix $d\geq 1$ and assume that condition~\eqref{cond-regu-case} is satisfied. Recall that for every test function $\chi:\R^d \to \R$, $\chi\<Psi>$ is the process defined in Proposition~\ref{sto}.

\smallskip

Then we call a solution (on $[0,T]$) of equation~\eqref{dim-d} any stochastic process $(u(t,x))_{t \in [0,T], x \in \mathbb{R}^d}$ such that, almost surely, the process $v:=u-\<Psi>$ is a solution of the mild equation
\begin{multline*}
v_t=S_t(\phi)-\imath\int_0^t S_{t-\tau}(\rho^2 |v_{\tau}|^2)\, d\tau-\imath\int_0^t S_{t-\tau}((\rho \overline{v}_{\tau})\cdot(\rho \<Psi>_{\tau}))\, d\tau\\
-\imath\int_0^t S_{t-\tau}((\rho v_{\tau})\cdot(\overline{\rho \<Psi>_{\tau}}))\, d\tau-\imath\int_0^t S_{t-\tau}(|\rho \<Psi>_{\tau}|^2)\, d\tau \, , \quad t\in [0,T]\, .
\end{multline*}
\end{definition}

\smallskip

\subsubsection{The rough case}

\

\smallskip

Let us now turn to the second situation, where 
\begin{equation}
2H_0+\sum_{i=1}^{d}H_i \leq d+1 \, .
\end{equation}
In this case, we can no longer pick $\al <0$ such that condition~\eqref{assump-gene-al} is satisfied, and so, referring to Proposition~\ref{sto}, the element $\rho\<Psi>$ involved in~\eqref{equa-v-intro} must be considered as a function of time \emph{with values in a space of negative-order distribution}. We will call this situation the \emph{rough case}, to signify that we are here working with very irregular processes.

\smallskip

Naturally, the fact that $\rho \<Psi>_\tau$ must be seen as a distribution (for every fixed $\tau$) makes the interpretation of $|\rho\<Psi>_{\tau}|^2$ in~\eqref{equa-v-intro} unclear. 

\smallskip

This problem can be emphasized through a regularization approach. Namely, let us go back to the sequence of approximated linear solutions $(\<Psi>_n)_{n\geq 0}$ satisfying~\eqref{linear-equation-n}. For every fixed $n\geq 0$, $\<Psi>_n$ is (almost surely) a function of time and space, and so, for every $(t,x)\in (0,T]\times \R^d$, we can compute the moment $\mathbb{E}\big[|\<Psi>_n(t,x)|^2\big]$. The following asymptotic result (which will be proved in Section~\ref{subsec:renorm-cstt}) rules out any possibility to extend the pointwise interpretation to the limit element~$|\<Psi>|^2$:

\begin{proposition}\label{prop:renorm-cstt}
Fix $d\geq 1$ and assume that $2H_0+\sum_{i=1}^{d}H_i\leq d+1$. Then the quantity $\mathbb{E}\big[|\<Psi>_n(t,x)|^2\big] $ does not depend on $x$. Denoting $\sigma_n(t):=\mathbb{E}\big[|\<Psi>_n(t,x)|^2\big]$,  the  following asymptotic equivalence property holds true: for every $(t,x)\in (0,T]\times \R^d$,
\begin{equation}\label{estim-sigma-n}
    \sigma_n(t) \underset{n \rightarrow \infty}\sim  \left\{
    \begin{array}{ll}
		c_H^1\, t\, n & \mbox{if}\quad 2H_0+\sum_{i=1}^{d}H_i=d+1 ,\\
        c_H^2\, t\, 2^{2n(d+1-[2H_0+\sum_{i=1}^{d}H_i])} & \mbox{if} \quad 2H_0+\sum_{i=1}^{d}H_i<d+1 \, ,  
    \end{array}
\right.
\end{equation}
for some constants $c_H^1,c_H^2>0$.
\end{proposition}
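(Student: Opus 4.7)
The plan is to reduce the computation of $\sigma_n(t)$ to an explicit Fourier integral, and then to extract its leading asymptotic behaviour by a standard dyadic/scaling argument. The proof naturally splits into three parts, with one central technical effort.

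First, the $x$-independence of $\sigma_n(t)$ follows from a stationarity argument: the sheet $B$ has stationary increments in each spatial coordinate, hence its distributional mixed derivative $\dot B$ is a centred Gaussian field, stationary in $x$. Since the approximation $B_n$ in~\eqref{defi:approx-b-n} is built from $B$ through a frequency-type truncation that preserves this stationarity, and since the Schr\"odinger group $S_t$ commutes with spatial translations, the centred Gaussian field $x\mapsto \<Psi>_n(t,x)$ is itself stationary in $x$, so its variance does not depend on $x$.

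Next, I would write an explicit spectral formula for $\sigma_n(t)$. The harmonizable representation of $\dot B$ has spectral density proportional to $|\eta|^{1-2H_0}\prod_{i=1}^d |\xi_i|^{1-2H_i}$. Plugging this into the Duhamel formula, computing the elementary time integral
$$
\int_0^t e^{-\imath(t-\tau)|\xi|^2}\,e^{\imath\eta\tau}\,d\tau=\frac{e^{\imath\eta t}-e^{-\imath|\xi|^2 t}}{\imath(\eta+|\xi|^2)},
$$
and applying It\^o isometry, one arrives at an identity of the form
$$
\sigma_n(t)=c\int_{D_n}\frac{4\sin^2\!\big(\tfrac{t}{2}(\eta+|\xi|^2)\big)}{(\eta+|\xi|^2)^2}\,|\eta|^{1-2H_0}\prod_{i=1}^d |\xi_i|^{1-2H_i}\,d\eta\,d\xi,
$$
with $D_n$ the dyadic spectral truncation set associated with $B_n$ (of width $\sim 2^n$ in each of the $d+1$ coordinates). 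For the asymptotic extraction, I would integrate in $\eta$ first: setting $s=\eta+|\xi|^2$, the kernel $\sin^2(ts/2)/s^2$ concentrates like $\tfrac{\pi t}{2}\,\delta_0(s)$ as $|\xi|\to\infty$, while on its effective support $\{|s|\lesssim 1/t\}$ the factor $|s-|\xi|^2|^{1-2H_0}$ is essentially equal to $|\xi|^{2(1-2H_0)}$. Substituting these asymptotic values reduces the problem to
$$
\sigma_n(t)\;\sim\;\pi t\int_{E_n}|\xi|^{2(1-2H_0)}\prod_{i=1}^d |\xi_i|^{1-2H_i}\,d\xi,
$$
where $E_n$ is the spatial projection of $D_n$. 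The integrand has homogeneity $-d+2\big[(d+1)-(2H_0+\sum_{i=1}^{d} H_i)\big]$, and a direct scaling/dyadic-shell count produces the polynomial rate $2^{2n((d+1)-[2H_0+\sum H_i])}$ in the subcritical regime, while at criticality the scale-invariance of the integrand yields a logarithmic divergence which, summed across the $\sim n$ dyadic shells of $E_n$, gives the advertised factor $n$.

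The main obstacle lies in the asymptotic step: the replacement of the Dirichlet kernel by a Dirac mass and of $|\eta|^{1-2H_0}$ by its on-shell value $|\xi|^{2(1-2H_0)}$ is only a formal asymptotic, and turning it into a rigorous equivalence requires a careful Littlewood--Paley decomposition of $D_n$ together with quantitative error estimates that are uniform in $n$. The critical neighbourhood to handle is the diagonal region $\{|\eta|\sim |\xi|^2\}$ where the above approximation breaks down, and one must also control the low-frequency end where the spectral singularities $|\xi_i|^{1-2H_i}$ accumulate. The critical case $2H_0+\sum H_i=d+1$ is the most demanding, since the leading logarithmic contribution has to be cleanly isolated from sub-leading terms that may themselves diverge at individual dyadic scales, and this is where the precise values of the constants $c_H^1$ and $c_H^2$ get pinned down.
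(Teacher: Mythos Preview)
Your heuristic is sound---the Fej\'er-type kernel $\sin^2(ts/2)/s^2$ does concentrate on the shell $\{\eta=-|\xi|^2\}$, and this is the core mechanism---but there is a factual slip that matters: the truncation set $D_n$ is \emph{not} of width $2^n$ in all $d+1$ coordinates. By~\eqref{defi:approx-b-n} the time-frequency is cut at $2^{2n}$, precisely so that the on-shell region remains inside $D_n$ for every space frequency up to $2^n$ (this is the Schr\"odinger scaling emphasised in the paper). With a $2^n$ time-frequency cutoff your concentration step would fail for space frequencies $\gtrsim 2^{n/2}$, and the resulting rate would be off by a factor of two in the exponent.

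On execution, the paper avoids the Littlewood--Paley machinery you anticipate. After a hyperspherical reduction and the substitution $r\mapsto r^2$, a single rescaling by $4^n$ in both variables turns $\sigma_n(t)$ (up to constants) into $t\,4^{n\kappa}\int_0^1 h(r)(\Phi_n\ast g)(r)\,dr$ for explicit one-dimensional functions $h,g$ and an approximation of the identity $\Phi_n(x)=\tfrac{4^nt}{2}\Phi(\tfrac{4^nt}{2}x)$, $\Phi(x)=\sin^2(x)/(\pi x^2)$; the subcritical case $\kappa>0$ then follows directly from $\Phi_n\ast g\to g$ in $L^p$, with no dyadic decomposition or uniform error tracking. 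The critical case $\kappa=0$ is handled by an entirely different and rather specific computation: with $T=4^nt/2$, the paper differentiates the resulting function $f(T)$ twice, factors $f''(T)$ as a product of two cosine integrals which it identifies as Fourier transforms of power functions, and applies Euler's reflection formula to obtain $f''(T)\sim\pi/T$, then integrates back up via a comparison lemma to get $f(T)\sim\pi T\ln T$. Your dyadic-shell count for the logarithmic regime is a reasonable heuristic, but it does not resemble the paper's argument and would need a genuinely different implementation to pin down the constant $c_H^1$.
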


A natural way to circumvent this divergence issue and to offer a possible interpretation of~$|\<Psi>|^2$ is to turn to a \emph{renormalization procedure}. In fact, it will here be sufficient to consider the most elementary of those methods, namely the Wick renormalization. Thus, for all fixed $n\geq 0$, $(t,x)\in [0,T]\times \R^d$, we set
\begin{equation}\label{defi:cerise}
\<Psi2>_n(t,x) := |\<Psi>_n(t,x)|^2-\sigma_n(t) \quad \text{where} \ \sigma_n(t):=\mathbb{E}\big[|\<Psi>_n(t,x)|^2\big] \, .
\end{equation}
Our second main contribution now reads as follows:

\begin{proposition}\label{sto1}
Let $d\geq 1$ and consider $(H_0,H_1,\dots,H_d)\in (0,1)^{d+1}$ such that
\begin{equation}\label{cond-hurst-psi-2}\tag{\textbf{H2}}
d+\frac{3}{4}< 2H_0+\sum_{i=1}^{d}H_i \leq d+1\, .
\end{equation}
Then, for every test function $\chi:\mathbb{R}^d\rightarrow\mathbb{R}$, the sequence $(\chi^2\<Psi2>_n)_{n \geq 1}$ (defined by~\eqref{defi:cerise}) converges almost surely in the space $\mathcal{C}([0,T]; \mathcal{W}^{-2\al,p}(\mathbb{R}^d))$, for all $2\leq p\leq \infty$ and $\al >0$ satisfying~\eqref{assump-gene-al}.

\smallskip

We denote the limit of this sequence by  $\chi^2 \<Psi2>$.
\end{proposition}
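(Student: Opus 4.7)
The plan is to follow the now-standard template for the construction of a second-order Wick square of a Gaussian distribution, combining two ingredients: Gaussian hypercontractivity in fixed chaos, which reduces the whole problem to an $L^2(\Omega)$ calculation, and a careful Fourier-side analysis of the resulting covariance kernels, where the sharp condition~\eqref{cond-hurst-psi-2} will enter.

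The starting point is that, for every fixed $n\geq 0$ and every $(t,x)$, the random variable $\chi^2(x)\<Psi2>_n(t,x)$ lies in the second (complex) Wiener chaos generated by the approximating noise $B_n$. This property is preserved by smooth spatial averaging and by Littlewood--Paley frequency localization, so that each block $\Delta_j(\chi^2\<Psi2>_n)(t,x)$ remains a second-chaos variable. By Nelson's hypercontractive inequality, all $L^q(\Omega)$-norms of such variables are equivalent to the $L^2(\Omega)$-norm, up to a constant depending only on $q$. Consequently, to prove almost-sure convergence in $\mathcal{C}([0,T];\mathcal{W}^{-2\alpha,p}(\R^d))$, it will suffice to prove that $\chi^2\<Psi2>_n$ is Cauchy in $L^2(\Omega)$ with respect to a slightly stronger norm in both time and space, with quantitative error going to zero as $n,m\to\infty$.

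For this second-moment calculation I would then appeal to Wick's formula for centered complex Gaussians, which yields
$$
\mathbb{E}\big[\<Psi2>_n(t,x)\,\overline{\<Psi2>_m(s,y)}\big]
= \big|K^{(1)}_{n,m}(t,x;s,y)\big|^2 + \big|K^{(2)}_{n,m}(t,x;s,y)\big|^2,
$$
where $K^{(1)}_{n,m}(t,x;s,y):=\mathbb{E}\big[\<Psi>_n(t,x)\overline{\<Psi>_m(s,y)}\big]$ and $K^{(2)}_{n,m}(t,x;s,y):=\mathbb{E}\big[\<Psi>_n(t,x)\<Psi>_m(s,y)\big]$. Both kernels admit explicit Fourier-side representations involving the Schr\"odinger phase $e^{-i(t-\tau)|\xi|^2}$ tensorized against the spectral density of $\dot B$, built on exactly the same ingredients as in the proof of Proposition~\ref{sto}. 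Plugging these expressions into a Littlewood--Paley decomposition of the $\mathcal{W}^{-2\alpha,p}$-norm, then applying Fubini and hypercontractivity inside the spatial $L^p$-integral, the estimate is reduced to controlling a family of iterated frequency integrals of the form $\int d\xi\,d\eta\,|\widehat{\varphi_j}(\xi+\eta)|^2\,\Sigma_H(\xi)\,\Sigma_H(\eta)\,\mathcal{I}(t,s;\xi,\eta)$ (with $\Sigma_H$ the spectral density of $\dot B$ and $\mathcal{I}$ the surviving oscillatory factor), and then to summing these bounds over the dyadic scale $j\geq-1$ with the weight $2^{-2\alpha jp}$.

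The main technical obstacle lies precisely in this frequency-integral estimate, substantially more delicate than its counterpart for $\<Psi>$: the squared kernels $|K^{(i)}_{n,m}|^2$ involve a \emph{convolution} of two fractional spectra coupled to two \emph{independent} Schr\"odinger phases, and no smoothing is provided by the group $S$ along the resonance set $|\xi|^2=|\eta|^2$, in sharp contrast with the parabolic situation recalled in Remark~\ref{rk:compari-lin}. A careful dyadic decomposition of the $(\xi,\eta)$ integration region, combined with a stationary-phase analysis of $\mathcal{I}$ along the non-degenerate directions normal to the resonant set, should yield summability of the frequency series precisely under the assumption $2H_0+\sum_{i=1}^d H_i>d+\tfrac34$; the $\tfrac34$-gap with respect to the linear threshold $d+1$ quantifies the limited (only partial, Strichartz-like) smoothing afforded by $S_{t-\tau}$. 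Once this bound is secured, the very same machinery applied to the time increments produces a H\"older-type control of the form $\mathbb{E}\big[\|\chi^2\<Psi2>_n(t)-\chi^2\<Psi2>_n(s)\|_{\mathcal{W}^{-2\alpha,p}}^2\big]\lesssim |t-s|^{2\gamma}$ needed to close a Kolmogorov continuity argument, and the a.s.\ convergence in $\mathcal{C}([0,T];\mathcal{W}^{-2\alpha,p})$ then follows by Borel--Cantelli along a dyadic subsequence, in the same spirit as the proof of Proposition~\ref{sto}.
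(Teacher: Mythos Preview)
Your high-level architecture (hypercontractivity in the second chaos, Wick's formula giving $|K^{(1)}|^2+|K^{(2)}|^2$, Fourier-side estimation, then Kolmogorov/GRR and Borel--Cantelli) is exactly the skeleton the paper follows. The execution, however, is organized differently. The paper does not use a Littlewood--Paley decomposition or stationary-phase arguments: instead it works directly with the Sobolev weight $\{1+|\cdot|^2\}^{-\alpha}$, absorbs the cut-off $\chi^2$ via the convolution Lemma~\ref{lem:handle-chi-correc}, and then reduces everything to integrals of $|\gamma_t(\xi,|\eta|)|\,|\gamma_{s,t}(\xi,|\eta|)|\cdot|\gamma_t(\tilde\xi,|\tilde\eta|)|^2$ against the fractional spectral densities. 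The time-frequency integrals in $\xi,\tilde\xi$ are dispatched by the elementary pointwise bounds of Corollary~\ref{tec} (no oscillatory-integral machinery), and the remaining coupling between the two spatial frequencies $\eta,\tilde\eta$ is handled by a single splitting $D_1\cup D_2$ according to whether $|\tilde\eta|$ is far from or close to $|\eta|$. On $D_1$ the weight $\{1+|\eta-\tilde\eta|^2\}^{-2\alpha}$ decouples the two factors; on $D_2$ (the near-diagonal region) one lands on $\int_{1/2}^{3/2} dr\,(1-r)^{-4\alpha}$, which is finite precisely when $\alpha<\tfrac14$, and this is where the threshold $2H_0+\sum H_i>d+\tfrac34$ enters. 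Note in particular that the relevant ``resonance'' is not $|\xi|^2=|\eta|^2$ but rather the Schr\"odinger dispersion relation $\xi=|\eta|^2$ (already built into $\gamma_t$) together with the spatial near-diagonal $|\eta|\approx|\tilde\eta|$; your proposal would need to be adjusted accordingly. Your LP/stationary-phase route may well work, but the paper's more pedestrian approach has the advantage of making the origin of the $\tfrac14$ completely transparent.
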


Just as the proof of Proposition~\ref{sto}, the proof of Proposition~\ref{sto1} will strongly lean on the stochastic properties of $\dot{B}$. It will be the topic of Section~\ref{subsec:cerise} below.

\smallskip

\begin{remark}
The restriction $2H_0+\sum_{i=1}^{d}H_i>d+\frac{3}{4}$ in~\eqref{cond-hurst-psi-2} (which will stem from our technical computations) can be compared with the restriction $\sum_{i=0}^d H_i > d-\frac34 $ in~\cite[Proposition 1.4]{deya-wave} for the quadratic wave model. We suspect that this condition might not be optimal, that is, we can certainly extend the construction of $\chi^2\<Psi2>$ below this threshold. However, condition~\eqref{cond-hurst-psi-2} will prove to be sufficient for our purpose, as it can be seen from the comparison with the more restrictive assumption~\eqref{cond-irreg-case} in our main theorem (see also Remark~\ref{rk:limitations}).
\end{remark}

\smallskip

With the above constructions in hand, the following \emph{Wick interpretation} of the equation naturally arises:

\begin{definition}\label{defi:sol}
Fix $d\geq 1$ and assume that condition~\eqref{cond-hurst-psi-2} is satisfied. Recall that for every test function $\chi:\R^d \to \R$, $\chi\<Psi>$ and $\chi^2\<Psi2>$ are the processes defined in Proposition~\ref{sto} and Proposition~\ref{sto1}, respectively.

\smallskip

In this setting, we call a \emph{Wick} solution (on $[0,T]$) of equation~\eqref{dim-d} any stochastic process $(u(t,x))_{t \in [0,T], x \in \mathbb{R}^d}$ such that, almost surely, the process $v:=u-\<Psi>$ is a solution of the mild equation
\begin{multline}\label{equa-v-rough}
v_t=S_t(\phi)-\imath\int_0^t S_{t-\tau}(\rho^2 |v_{\tau}|^2)\, d\tau-\imath\int_0^t S_{t-\tau}((\rho \overline{v}_{\tau})\cdot(\rho \<Psi>_{\tau}))\, d\tau\\
-\imath\int_0^t S_{t-\tau}((\rho v_{\tau})\cdot(\overline{\rho \<Psi>_{\tau}}))\, d\tau-\imath\int_0^t S_{t-\tau}(\rho^2 \<Psi2>_{\tau})\, d\tau\, , \quad t\in [0,T] \, .
\end{multline}
\end{definition}

\smallskip

\begin{remark}
As the reader may have noticed, our tree notation $\<Psi>$ and $\<Psi2>$ for the main stochastic processes follows the convention used in~\cite{hai-14}: the circle $\<circle>$ therein stands for the random noise $\dot{B}$, while the line $\<line>$ represents the Duhamel integral operator $\mathcal{I}=(i\partial_t -\Delta)^{-1}$.
\end{remark}

\smallskip

\subsection{Step 3: Local wellposedness results}

\

\smallskip

At this stage of the procedure, the stochastic part of our analysis can be considered as complete, and our aim now is to solve equation~\eqref{dim-d} (understood along Definition~\ref{defi:sol-regu} or Definition~\ref{defi:sol}) by means of a \emph{deterministic} fixed-point argument.

\smallskip

As in the previous section, and for the sake of clarity, let us separate the regular and rough situations in the presentation of our results.

\subsubsection{The regular case}

\

\smallskip

Let us first handle the regular case, where condition~\eqref{cond-regu-case} is satisfied and Definition~\ref{defi:sol-regu} of a solution prevails. By relying on the most standard estimates associated with the Schr{\"o}dinger group $S$ (the so-called \emph{Strichartz inequalities}, summed up in Lemma~\ref{lem:strichartz} below), we are here able to establish the following result: 

\begin{theorem}[\textbf{Local well-posedness under~\eqref{cond-regu-case}}]\label{resu}
Assume that $1\leq d \leq 4$ and that condition~\eqref{cond-regu-case} is satisfied.
Let $\beta$ be such that $0< \beta < 2H_0+\sum_{i=1}^{d}H_i- (d+1)$, and consider the pair $(p,q)$ given by the formulas
$$p=\frac{12}{d-\beta} \ , \ q=\frac{6d}{2d+\beta} \, .$$
Assume finally that $\phi \in H^\beta(\mathbb{R}^d)$. In  this setting, the following assertions hold true:

\smallskip

\noindent
$(i)$ Almost surely, there exists a time $T_0 >0$ such that equation~\eqref{dim-d} admits a unique solution~$u$ (in the sense of Definition~\ref{defi:sol-regu}) in the set 
$$\mathcal{S}_{T_0}:= \<Psi> + X^{\beta}(T_0),\hspace{0,5cm} \text{where} \ X^{\beta}(T_0):=\mathcal{C}([0,T_0]; H^\beta(\mathbb{R}^d))\cap L^p([0,T_0];\mathcal{W}^{\beta,q}(\mathbb{R}^d)).$$

\smallskip

\noindent
$(ii)$ For every $n\geq 1$, let $u_n$ denote the \emph{smooth} solution of~\eqref{dim-d}, that is $u_n$ is the solution (in the sense of Definition~\ref{defi:sol-regu}) associated with $\rho \<Psi>_n$. Then, for every 
$$0< \beta < 2H_0+\sum_{i=1}^{d}H_i- (d+1)$$
and for every test function $\chi: \R^d \to \R$, the sequence $(\chi u_{n})_{n\geq 1}$ converges almost surely in $\mathcal{C}([0,T_0];H^{\beta}(\mathbb{R}^d))$ to $\chi u$, where $u$ is the solution  exhibited in item $(i)$.
\end{theorem}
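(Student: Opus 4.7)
The plan is to solve equation~\eqref{equa-v-intro} by a Banach fixed-point argument in a closed ball $B_R\subset X^\beta(T_0)$, where the Picard map $\Phi$ is the right-hand side of the mild equation appearing in Definition~\ref{defi:sol-regu}. First I would record two preliminary facts. The pair $(p,q)$ is Schr\"odinger-admissible, since
$$\frac{2}{p}+\frac{d}{q}=\frac{d-\beta}{6}+\frac{2d+\beta}{6}=\frac{d}{2},$$
so Lemma~\ref{lem:strichartz} provides $\|S_\cdot\phi\|_{X^\beta(T_0)}\lesssim \|\phi\|_{H^\beta}$ and the dual inhomogeneous bound
$$\Big\|\int_0^\cdot S_{\cdot-\tau}F_\tau\,d\tau\Big\|_{X^\beta(T_0)}\lesssim \|F\|_{L^{p'}([0,T_0];\,\mathcal{W}^{\beta,q'}(\mathbb{R}^d))}.$$
Second, since~\eqref{cond-regu-case} holds and $0<\beta<2H_0+\sum_{i=1}^dH_i-(d+1)$, one may pick $\alpha=-\beta$ in Proposition~\ref{sto}, which gives $\chi\<Psi>\in\mathcal{C}([0,T];\mathcal{W}^{\beta,\infty}(\mathbb{R}^d))$ almost surely for every smooth compactly-supported~$\chi$. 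As all nonlinearities in~\eqref{equa-v-intro} are localised by~$\rho$, this is the only regularity property one needs from the stochastic input.

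The core of the argument is then a set of multilinear estimates controlling each of the four Duhamel terms in $L^{p'}_{T_0}\mathcal{W}^{\beta,q'}_x$ by a positive power of $T_0$. For the genuinely quadratic term $\rho^2|v|^2$ I would combine the fractional Leibniz rule with the Sobolev embedding $\mathcal{W}^{\beta,q}(\mathbb{R}^d)\hookrightarrow L^r(\mathbb{R}^d)$, where $r=3d/(d-\beta)$ is determined by $1/q'=1/q+1/r$ (the embedding is legitimate because $\beta-d/q+d/r=\beta/2\ge 0$), and then apply H\"older in time with exponents $(p,p)$ in $L^{p/2}_{T_0}$ followed by $L^{p/2}_{T_0}\hookrightarrow L^{p'}_{T_0}$ on the interval $[0,T_0]$. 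This yields
$$\bigl\|\rho^2|v|^2\bigr\|_{L^{p'}_{T_0}\mathcal{W}^{\beta,q'}_x}\lesssim T_0^{\theta}\,\|v\|_{X^\beta(T_0)}^{2},\qquad \theta=1-\frac{3}{p}=\frac{4-d+\beta}{4},$$
and the essential point is that $\theta>0$ exactly in the regime $1\le d\le 4$ with $\beta>0$, which is where the dimensional restriction of the theorem comes from. The two mixed terms $(\rho v)(\rho\<Psi>)$ and $(\rho\bar v)(\rho\<Psi>)$ are handled by the same Leibniz rule, using the almost sure $L^\infty_x$ bound on $\rho\<Psi>$ from the paragraph above; they are controlled by $T_0^{\theta}\,\|v\|_{X^\beta(T_0)}\,\|\rho\<Psi>\|_{\mathcal{C}_T\mathcal{W}^{\beta,\infty}_x}$. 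The purely stochastic term $\rho^2|\<Psi>|^2$ is bounded directly by $T_0\,\|\rho\<Psi>\|_{\mathcal{C}_T\mathcal{W}^{\beta,\infty}_x}^{2}$, independently of~$v$.

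Combining these estimates gives, with an a.s.\ finite random constant $C(\omega)$,
$$\|\Phi(v)\|_{X^\beta(T_0)}\le C(\omega)\bigl(\|\phi\|_{H^\beta}+T_0^{\theta}(R^{2}+R+1)\bigr),\quad \|\Phi(v)-\Phi(\tilde v)\|_{X^\beta(T_0)}\le C(\omega)\,T_0^{\theta}(R+1)\,\|v-\tilde v\|_{X^\beta(T_0)}.$$
Choosing first $R\sim \|\phi\|_{H^\beta}+1$ and then a random $T_0(\omega)>0$ small enough, the Banach fixed-point theorem provides the existence and uniqueness of $v\in X^\beta(T_0)$, which is assertion~$(i)$. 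For item~$(ii)$, I would run exactly the same scheme with $\<Psi>$ replaced by $\<Psi>_n$. The almost sure convergence $\chi\<Psi>_n\to\chi\<Psi>$ in $\mathcal{C}([0,T];\mathcal{W}^{\beta,\infty}_x)$ (Proposition~\ref{sto} applied to the same $\alpha=-\beta$) guarantees that, for $n$ large enough, the constants $C(\omega,n)$ can be bounded uniformly, so a common random time of existence $T_0(\omega)$ may be chosen; the Lipschitz dependence of the Picard fixed point on its data then yields $\chi v_n\to \chi v$, hence $\chi u_n\to\chi u$, in $\mathcal{C}([0,T_0];H^\beta(\mathbb{R}^d))$.

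The main obstacle is the quadratic estimate in the third step: the precise Strichartz pair in the statement is tuned so that the H\"older step in time produces a strictly positive power $\theta$ of $T_0$ for the term $|v|^2$, and balancing the Sobolev embedding with the H\"older exponent is what forces $d\le 4$. All other estimates are then essentially linear in $v$ and follow from the regularity of $\chi\<Psi>$ obtained in the stochastic part of the paper.
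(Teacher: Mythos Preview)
Your proposal is correct and follows essentially the same approach as the paper: a contraction mapping in $X^\beta(T_0)$ based on the Strichartz estimates of Lemma~\ref{lem:strichartz}, the fractional Leibniz rule of Lemma~\ref{lem:frac-leibniz} combined with the Sobolev embedding $\mathcal{W}^{\beta,q}\hookrightarrow L^{3d/(d-\beta)}$, and a H\"older-in-time step producing the power $T_0^{1-(d-\beta)/4}$, which is exactly the paper's exponent in Proposition~\ref{control-regular}. The only cosmetic differences are that the paper packages the argument as a deterministic result (Theorem~\ref{thm:regular}) for a generic input $\luxor\in\mathcal{E}_\beta$ before specialising to $\luxor=\rho\<Psi>$, and that it measures $\luxor$ in the $\mathcal{W}^{\beta,q}$ norm rather than your $\mathcal{W}^{\beta,\infty}$ norm; both are controlled by Proposition~\ref{sto}, so this changes nothing.
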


Let us be slightly more specific about the convergence property in item $(ii)$. In fact, what we will establish in the sequel (see Theorem~\ref{thm:regular} below) is that for every element $\luxo$ in a suitable space of functions, the equation obtained by replacing~$\rho\<Psi>$ with $\luxo$ in~\eqref{equa-v-intro} admits a unique solution~$v$, on some time interval depending only on~$\luxo$. Besides, this solution is a continuous function of~$\luxo$ (see Proposition~\ref{control-regular}). Item $(i)$ in the above statement is then an application of these general results to $\luxo=\rho\<Psi>$, while item $(ii)$ corresponds to taking $\luxo=\rho\<Psi>_n$, which provides us with the desired time $T_0>0$.

\smallskip

\subsubsection{The rough case}\label{subsec:wellposed-rough-intro}

\

\smallskip

Let us conclude this presentation of our main results by considering the wellposedness issue for the equation in the rough case. To be more specific, we assume from now on that the assumptions in~\eqref{cond-hurst-psi-2} are satisfied, so that the two processes $\rho\<Psi>$ and $\rho^2\<Psi2>$ are well defined and the equation can be understood in the sense of Definition~\ref{defi:sol}. In other words, we now focus on the analysis of equation~\eqref{equa-v-rough}. 

\smallskip

In order to describe the major technical difficulty  arising in this case, recall first that under assumption~\eqref{cond-hurst-psi-2} and following Proposition~\ref{sto}, the element $\rho\<Psi>_\tau$ must here be treated as a distribution of negative Sobolev regularity (for every fixed time $\tau$). Consequently, the term $(\rho \overline{v}_\tau) \cdot (\rho \<Psi>_\tau)$ (or $(\rho v_\tau) \cdot (\overline{\rho \<Psi>_\tau})$) in~\eqref{equa-v-rough} can only be understood as the product of a distribution, namely $\rho \<Psi>_\tau$, with a function, that is $\rho \overline{v}_\tau$. Such a product is known to obey the following simple rule (see Lemma~\ref{lem:product} for a precise statement): if $\rho \<Psi>_\tau$ is of Sobolev regularity $-\alpha$ (with $\al >0$), then $\rho \overline{v}_\tau$ must be a function of Sobolev regularity $\beta$ with $\beta>\alpha$, and in this case $(\rho \overline{v}_\tau) \cdot (\rho \<Psi>_\tau)$ is indeed well defined as a distribution of negative order $-\alpha$.

\smallskip

Going back to equation~\eqref{equa-v-rough}, our only hope to settle a fixed-point argument thus lies in the possibility to control the terms
$$\int_0^t S_{t-\tau}((\rho \overline{v}_{\tau})\cdot(\rho \<Psi>_{\tau}))\, d\tau \ , \ \int_0^t S_{t-\tau}((\rho v_{\tau})\cdot(\rho \overline{\<Psi>}_{\tau}))\, d\tau$$
as functions of Sobolev regularity $\be >\al >0$. Otherwise stated, we (morally) need convolution with $S$ to produce a regularization effect and allow the transition from $H^{-\al}(\R^d)$ to $H^\be(\R^d)$. 

\smallskip

Unfortunately, such a regularization property, which corresponds to a well-controlled phenomenon in the heat or in the wave situation, is not as standard in the Schr{\"o}dinger setting. In particular, the most classical estimates on $S$ (the so-called Strichartz inequalities, which we have already mentioned in the regular situation) cannot provide us with the desired smoothing effect (see Lemma~\ref{lem:strichartz} for more details). 

\smallskip

To overcome this obstacle, we propose to turn to more specific \emph{local} regularization properties, similar to those exhibited by Constantin and Saut in~\cite{constantin-saut}. It indeed appears that if we only focus on the local regularity of the distributions at stake (meaning that a cut-off function is inserted within the usual Sobolev norm, see~\eqref{defi:h-rho}), then a small gain can be expected from the convolution with $S$. This will be the topic of our intermediate Lemma~\ref{lem:loc-regu}, which can be seen as an extension of the main result in~\cite{constantin-saut}. 

\smallskip

For this technical property to be implemented here, an additional condition must be imposed on the function $\rho:\R^d \to \R$ in~\eqref{dim-d} (or in~\eqref{equa-v-rough}): namely, we need $\rho$ to be of the form
\begin{equation}\label{form-rho}\tag{$\mathbf{F_\rho}$}
\rho(x_1,\dots,x_d)=\rho_1(x_1)\cdots \rho_d(x_d)
\end{equation}
for smooth compactly-supported functions $\rho_1,\ldots,\rho_d$ on $\R$.

\smallskip

Note also that for stability reasons (toward a fixed-point argument), the consideration of a local Sobolev norm is of course not without consequences: it will have to be counterbalanced by a commutator-type estimate, that is a suitable control on switching $\rho$ with the fractional Laplacian in Sobolev norms, which will be the purpose of Lemma~\ref{lem:commut}.

\smallskip

With the above elements in mind, we are finally in a position to state our main result in the rough situation (the spaces involved in this statement will all be introduced into details in the subsequent Section~\ref{subsec:notations}):
 
\begin{theorem}[\textbf{Local well-posedness under~\eqref{cond-irreg-case}}]\label{resu1}
Assume that $1\leq d\leq 3$ and that the cut-off function $\rho: \R^d \to \R$ in~\eqref{dim-d} is of the form~\eqref{form-rho}. Besides, assume that 
\begin{equation}\label{cond-irreg-case}\tag{\textbf{H2'}}
-\al_d<2H_0+\sum_{i=1}^{d}H_i- (d+1)\leq 0 \, , \quad \text{where} \ \ \al_d:=
\left\{
\begin{array}{l}
\frac{3}{20} \quad \text{if} \ d=1\\
\frac{1}{10} \quad \text{if} \ d=2\\
\frac{1}{24} \quad \text{if} \ d=3 
\end{array}
\right. \, .
\end{equation}
Fix $\al>0$ such that $d+1-\big(2H_0+\sum_{i=1}^{d}H_i\big)<\al<\al_d$. Then the following assertions hold true:

\smallskip

\noindent
$(i)$ One can find parameters $0\leq \ka\leq \frac12$ and $p,q\geq 2$ such that almost surely, and for every $\phi \in H^{-2\alpha}(\mathbb{R}^d)$, there exists a time $T_0>0$ for which equation~\eqref{dim-d} admits a unique Wick solution $u$ (in the sense of Definition~\ref{defi:sol}) in the set
$$\mathcal{S}_{T_0}:= \<Psi> + X^{\alpha,\ka,(p,q)}_\rho(T_0)\, ,$$
where
$$X^{\alpha,\ka,(p,q)}_\rho(T):=\mathcal{C}([0,T]; H^{-2\alpha}(\mathbb{R}^d))\cap L^p([0,T]; \cw^{-2\alpha,q}(\mathbb{R}^d))\cap L^{\frac{1}{\ka}}_T H^{-2\al+\ka}_\rho \, .$$

\smallskip

\noindent
$(ii)$ For every $n\geq 1$, let $\widetilde{u}_n$ denote the \emph{smooth} Wick solution of~\eqref{dim-d}, that is $\widetilde{u}_n$ is the solution (in the sense of Definition~\ref{defi:sol}) associated with the pair $(\rho \<Psi>_n,\rho^2\<Psi2>_n)$. Then, for every $\al$ satisfying~\eqref{assump-gene-al} and every test function $\chi:\R^d \to \R$, the sequence $(\chi\widetilde{u}_{n})_{n\geq 1}$ converges almost surely in $\mathcal{C}([0,T_0];H^{-2\al}(\mathbb{R}^d))$ to $\chi u$, where $u$ is the Wick solution exhibited in item $(i)$.
\end{theorem}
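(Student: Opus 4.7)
The plan is to establish both claims by a \emph{deterministic} contraction argument applied to equation~\eqref{equa-v-rough}, treating $(\rho\<Psi>,\rho^2\<Psi2>)$ as an abstract pair of data $(\luxo,\cherry)$ living in $\cac([0,T];\cw^{-\al,p}(\R^d))\times \cac([0,T];\cw^{-2\al,p}(\R^d))$ for the $\al$ provided by~\eqref{assump-gene-al}. For such data I would look for $v$ as the fixed point in $X^{\al,\ka,(p,q)}_\rho(T_0)$ of the operator $\Gamma_{(\luxo,\cherry)}$ defined by the right-hand side of~\eqref{equa-v-rough} with $\luxo$, $\cherry$ substituted for $\rho\<Psi>$, $\rho^2\<Psi2>$. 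Item~(i) then follows by specializing to the true stochastic data provided almost surely by Propositions~\ref{sto}--\ref{sto1}, while item~(ii) follows from the continuity of the fixed point with respect to $(\luxo,\cherry)$ combined with the almost-sure convergence of the approximations $(\rho\<Psi>_n,\rho^2\<Psi2>_n)$ delivered by those same propositions (after multiplication by a cutoff $\chi$).

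To carry out the contraction, I would estimate each of the four Duhamel integrals appearing in~\eqref{equa-v-rough} in all three components of $X^{\al,\ka,(p,q)}_\rho(T_0)$ separately. The $\cac_t H^{-2\al}\cap L^p_t\cw^{-2\al,q}$ pieces are controlled through Strichartz inequalities (Lemma~\ref{lem:strichartz}) for a Schr\"odinger-admissible pair $(p,q)$, reducing to dual-Strichartz bounds on the integrands at regularity $-2\al$. The local-smoothing piece $L^{1/\ka}_T H^{-2\al+\ka}_\rho$, which cannot be obtained from Strichartz alone, is handled by the Constantin--Saut-type estimate of Lemma~\ref{lem:loc-regu}, providing a gain of $\ka\leq 1/2$ derivatives after localization by $\rho$. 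For the quadratic term $\rho^2|v|^2$ the bound reduces to a Sobolev product estimate using $v\in H^{-2\al+\ka}_\rho$; the mixed terms $(\rho\bar v)\luxo$ and $(\rho v)\bar\luxo$ are handled by the distribution--function product rule (Lemma~\ref{lem:product}), applicable precisely when $\ka>3\al$ so that $-2\al+\ka>\al$ and the product lies in $H^{-\al}$; the pure Wick term $S_{t-\tau}\cherry_\tau$ is bounded directly from $\cherry\in \cac_T\cw^{-2\al,p}$. The commutator estimate of Lemma~\ref{lem:commut} intervenes whenever one needs to exchange $\rho$ with a fractional Laplacian to pass between $H^s_\rho$ and $H^s$ in those product estimates.

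Once the stability bounds are secured, the Lipschitz estimates for $\Gamma_{(\luxo,\cherry)}[v_1]-\Gamma_{(\luxo,\cherry)}[v_2]$ follow by applying the same inequalities to bilinear differences $|v_1|^2-|v_2|^2=(v_1-v_2)\bar v_1+v_2(\bar v_1-\bar v_2)$ and to $(v_1-v_2)\luxo$. Time integrations against the Strichartz and local-smoothing exponents produce factors of $T_0^\theta$ with $\theta>0$, so that a sufficiently small $T_0$ turns $\Gamma_{(\luxo,\cherry)}$ into a contraction on a ball whose radius depends on $\|\phi\|_{H^{-2\al}}$ and on the norms of $(\luxo,\cherry)$; Banach's fixed point theorem then yields the unique solution $v\in X^{\al,\ka,(p,q)}_\rho(T_0)$ claimed in~(i). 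Continuity of the fixed point in $(\luxo,\cherry)$ then transfers the convergence of the approximate data from Propositions~\ref{sto}--\ref{sto1} into the convergence of $\chi\widetilde u_n$ to $\chi u$ in $\cac([0,T_0];H^{-2\al}(\R^d))$ asserted in~(ii).

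The main obstacle is a tight parameter-balancing problem: the exponents $(\al,\ka,p,q)$ must simultaneously satisfy (a) Schr\"odinger admissibility of $(p,q)$; (b) Sobolev multiplication $H^{-2\al+\ka}\cdot H^{-2\al+\ka}\hookrightarrow H^{-2\al}$ controlling $\rho^2|v|^2$ in dimensions $d\leq 3$; (c) the strict inequality $\ka>3\al$ coming from the distribution--function product rule required to make sense of $(\rho v)\cdot \luxo$; (d) the saturation bound $\ka\leq 1/2$ of the Schr\"odinger local smoothing; (e) positivity of the time exponents after H\"older in $t$. Reconciling (b)--(d) opens a non-empty window only when $\al<\al_d$ for the dimension-dependent threshold $\al_d\in\{3/20,1/10,1/24\}$, which is exactly the restriction imposed in~\eqref{cond-irreg-case}. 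Verifying that this window is genuinely non-empty and that all the above estimates close together is the technical heart of the argument; everything else is a bookkeeping exercise around Lemmas~\ref{lem:strichartz}, \ref{lem:product}, \ref{lem:commut} and~\ref{lem:loc-regu}.
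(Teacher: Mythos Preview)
Your proposal is correct and follows essentially the same route as the paper: the proof of Theorem~\ref{resu1} there is obtained by combining the stochastic constructions of Propositions~\ref{sto}--\ref{sto1} with the deterministic fixed-point result Theorem~\ref{thm:noregular} (whose core is Proposition~\ref{last}), exactly along the lines you sketch, including the use of Lemma~\ref{lem:loc-regu} for the $L^{1/\ka}_T H^{-2\al+\ka}_\rho$ component, Lemma~\ref{lem:product} for the mixed terms under the constraint $\ka>3\al$, and Lemma~\ref{lem:commut} to pass between $H^s_\rho$ and $H^s$. Two small points of precision: the paper places the abstract data in $\mathcal{R}_\al=L^\infty_T\cw^{-\al,\infty}\times L^\infty_T H^{-2\al}$ (the $p=\infty$ endpoint of Propositions~\ref{sto}--\ref{sto1}) rather than a generic $\cw^{-\al,p}$, which is what makes Lemma~\ref{lem:product} apply cleanly; and the quadratic term $\rho^2|v|^2$ is not handled by a pure $H^{-2\al+\ka}\cdot H^{-2\al+\ka}\hookrightarrow H^{-2\al}$ product law but by the fractional Leibniz rule (Lemma~\ref{lem:frac-leibniz}) in a dual Strichartz space $\cw^{-2\al+\ka,s'}$ together with the interpolation Lemma~\ref{lem:interpol}, with dimension-dependent choices $(p,q)=(\infty,2),(4,4),(2,6)$ for $d=1,2,3$---this is precisely where the thresholds $\al_d$ emerge.
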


\

The above wellposedness result can be considered as the main novelty of our work. We are indeed not aware of any previous study of a nonlinear Schr{\"o}dinger equation involving such an irregular noise, and thus forcing us to rely on both renormalization arguments and local regularization properties.

\

\begin{remark}\label{rk:limitations}
It is interesting to note that the conditions in~\eqref{cond-irreg-case}, which stem from the deterministic part of the study (as emphasized by Theorem~\ref{thm:noregular} below), are more restrictive than those in~\eqref{cond-hurst-psi-2} ensuring the existence of the stochastic element $\<Psi2>$. This observation contrasts with the situation described for instance in~\cite{deya-wave,deya-wave-2,gubi-koch-oh,oh-okamoto-2}, where the application of a similar strategy is, on the contrary, limited by the scope of validity of the stochastic objects.
\end{remark}

\smallskip

\begin{remark}
Due to the limitations of the local regularization effect of the Schr{\"o}dinger group (as reported in Lemma~\ref{lem:loc-regu}), further expansions of the strategy, in the spirit of the \enquote{second-order analysis} developed e.g. in~\cite{deya-wave-2,oh-okamoto-2}, seem difficult to set up in this Schr{\"o}dinger setting.
\end{remark}

\smallskip

\begin{remark}
As can be seen from the above description of our methodology, the cut-off function~$\rho$ in~\eqref{dim-d} is to play two fundamental roles in the study: \\
\indent $\bullet$ First, it will allow us to bring the analysis of the equation back to compact domains, where the regularity of the driving processes is well controlled (by Propositions~\ref{sto} and ~\ref{sto1}). The situation, in this regard, is somehow equivalent to studying equation~\eqref{dim-d} on a torus (although the definition of the space-time fractional noise on a torus is not as clear as in the current Euclidean setting).\\
\indent $\bullet$ Secondly, thanks to the involvement of $\rho$, we can appeal to the specific local regularization properties of $S$, which, as we have explained it above, will be our key ingredient toward stability and fixed-point arguments. Observe that no similar regularization result would be available for a study of the equation on a torus.\\
\indent This being said, in spite of the restriction~\eqref{form-rho}, the function $\rho$ can still be taken equal to $1$ on any arbitrary compact domain, and so, at least locally (in time and in space), our solution $u$ of~\eqref{dim-d} can be regarded as a viable model for the (formal) dynamics
\begin{equation}\label{eq-without-rho}
\imath \partial_t u-\Delta u=|u|^2 + \dot{B} \, .
\end{equation}
A direct analysis of equation~\eqref{eq-without-rho} may be possible through an extension of the subsequent methods to weighted Sobolev spaces (allowing to control the asymptotic behaviour of the processes), but such adaptations are clearly beyond our reach for the moment.
\end{remark}

\smallskip

\begin{remark}
Our arguments and results could certainly be extended to the nonlinearity $\rho^2 u^2$ or~$\rho^2 \overline{u}^2$ (instead of $\rho^2 |u|^2$) through minor modifications of the stochastic constructions of Section~\ref{stocha} (the deterministic well-posedness procedure would then clearly follow the same lines).
\end{remark}

\

\subsection{Notations}\label{subsec:notations}

 Fix a (space) dimension $d\geq 1$. Throughout the paper, we will call a \emph{test function} (on $\R^d$) any function $\rho:\R^d \to \R$ that is smooth and compactly-supported. Besides, we denote by $\mathcal{S}(\R^d)$ the space of Schwartz functions on $\R^d$.

\smallskip

We will also refer to the scale of Sobolev spaces defined for all $s\in \mathbb{R}$ and $1\leq p\leq \infty$ as
$$\mathcal{W}^{s,p}(\mathbb{R}^d):=\left\{f\in \mathcal{S}'(\mathbb{R}^d):\ \|f \|_{\mathcal{W}^{s,p}}=\|\mathcal{F}^{-1}(\{1+|.|^2\}^{\frac{s}{2}}\mathcal{F}f) |L^p(\mathbb{R}^d)\| 
  <\infty \right\} \, ,$$
where the Fourier transform $\cf$, resp. the inverse Fourier transform $\cf^{-1}$, is defined along the convention: for all $f \in \mathcal{S}(\mathbb{R}^d)$ and $x \in \mathbb{R}^d,$
$$\mathcal{F}(f)(x)=\hat{f}(x):=\int_{\mathbb{R}^d} f(y)e^{-\imath \langle x, y \rangle}dy\, , \quad \text{resp.} \ \ \mathcal{F}^{-1}(f)(x):=\frac{1}{(2\pi)^d}\int_{\mathbb{R}^d} f(y)e^{\imath \langle x, y \rangle}dy\, .$$
We then set $H^s(\mathbb{R}^d):=\mathcal{W}^{s,2}(\mathbb{R}^d)$.

\smallskip

Now, as far as space-time functions (or distributions) are concerned, and for the sake of clarity, we will occasionally use the following shortcut notation: for all $T\geq 0$, $1\leq p,q\leq \infty$ and $s\in \R$, 
\begin{equation}\label{shortcut-spaces}
L^p_T \cw^{s,q}:=L^p([0,T];\mathcal{W}^{s,q}(\mathbb{R}^d))\, , \quad  \quad \|.\|_{L^p_T \cw^{s,q}}:=\|.\|_{L^p([0,T];\mathcal{W}^{s,q}(\mathbb{R}^d))} \, .
\end{equation}
The notation $\mathcal{C}([0,T];\mathcal{W}^{s,q}(\mathbb{R}^d))$ will refer to the set of continuous functions on $[0,T]$ with values in $\mathcal{W}^{s,q}(\mathbb{R}^d)$.

\smallskip

Let us finally introduce the aforementioned \emph{local} Sobolev spaces, that will play a prominent role in the analysis of the rough situation. Namely, for all test function $\rho:\R^d \to \R$ and $s\in \R$, we set
\begin{equation}\label{defi:h-rho}
H^{s}_\rho(\R^d):=\{ v \in \mathcal{S}^{'}(\mathbb{R}^d);\ \|\rho \cdot (\id-\Delta)^{\frac{s}{2}}(v)\|_{L^2(\mathbb{R}^d)}<\infty \} \, ,
\end{equation}
where the operator $(\id-\Delta)^{\frac{s}{2}}$ is understood (as usual) through the Fourier transform formula 
$$\cf\big((\id-\Delta)^{\frac{s}{2}}(v)\big)(\xi):=\{1+|\xi|^2\}^{\frac{s}{2}} \cf v (\xi) \, .$$
We endow $H^{s}_\rho(\R^d)$ with the natural seminorm $\|v\|_{H^s_\rho}:=\|\rho \cdot (\id-\Delta)^{\frac{s}{2}}(v)\|_{L^2(\mathbb{R}^d)}$,
and finally set, along the convention in~\eqref{shortcut-spaces},
\begin{equation*} 
L^p_T H^{s}_\rho:=L^p([0,T];H^{s}_\rho(\mathbb{R}^d))\, , \quad  \quad \|.\|_{L^p_T H^{s}_\rho}:=\|.\|_{L^p([0,T];H^{s}_\rho(\mathbb{R}^d))} \, .
\end{equation*}

\smallskip

\subsection{Outline of the study}

\

\smallskip

The rest of the paper is organized as follows. In Section~\ref{stocha} we perform the stochastic constructions which allow to give a sense to the equation. In Section~\ref{sec:regular} we prove the local well-posedness result in the regular case, while Section~\ref{sec:irreg-case-det} is devoted to the analysis in the irregular case. Finally, Section~\ref{appendix} is an appendix in which we gather the proofs of some technical results. 

\medskip

\textit{Throughout the paper, and for the sake of clarity, we will use the notation $A\lesssim B$ in order to signify that there exists an irrelevant constant $c$ such that $A\leq c B$.}

\

\section{Stochastic constructions}\label{stocha}

As emphasized in Section~\ref{sec:intro}, our analysis of equation~\eqref{dim-d} will be clearly splitted into a stochastic part (essentially corresponding to the construction of $\<Psi>$ and $\<Psi2>$) and a deterministic part (devoted to the fixed-point procedure). 

\smallskip

In this section, we propose to deal with the stochastic step of the study. In other words, we focus here on the proofs of Proposition~\ref{sto}, Proposition~\ref{prop:renorm-cstt} and Proposition~\ref{sto1}.

\smallskip

Before we go into the details, and for the sake of clarity, let us recall the specific definition of the process at the core of the model, that is the space-time fractional Brownian motion:

\begin{definition}\label{def:fbm}
Let $d \geq 1$ be a space dimension, $T\geq 0$ a positive time and $(\Omega,\mathcal{F},\mathbb{P})$ a complete filtered probability space. On this space, and for every fixed $H=(H_0,H_1,\dots,H_d) \in (0,1)^{d+1}$, we call a space-time fractional Brownian motion of Hurst index $H$ any centered Gaussian process $B:\Omega \times ([0,T]\times \R^d) \to \R$ with covariance function given by the formula: 
$$\mathbb{E} \big[ B(s,x_1,\dots,x_d)B(t,y_1,\dots, y_d)\big] = R_{H_0}(s,t)\prod_{i=1}^{d} R_{H_i}(x_i,y_i) \, , \quad  s,t\in [0,T] \, , \ x,y\in \R^d \, ,$$
where
$$R_{H_i}(x,y):=\frac12 (|x|^{2H_i}+|y|^{2H_i}-|x-y|^{2H_i}) \ .$$
\end{definition}

\

Now remember that our strategy to initiate the construction of both $\<Psi>$ and $\<Psi2>$ consists in the introduction of a smooth approximation $(B_n)_{n\geq 0}$ of $B$ (leading immediately to a smooth approximation $(\dot{B}_n)_{n\geq 0}$ of the noise $\dot{B}$). We will rely here on a sequence derived from the so-called harmonizable representation of $B$, and which happens to be especially suited for Fourier analysis and computations in Sobolev spaces (a similar choice is made in~\cite{deya-wave}, for the same reasons).

\smallskip

Along this idea, let us first introduce, on some complete probability space $(\Omega,\mathcal{F},\mathbb{P})$, a space-time white noise $W$ on $\R^{d+1}$. Then fix $H=(H_0,H_1,\dots,H_d)\in (0,1)^{d+1}$ and set, for all $t\in [0,T]$ and $x\in \R^d$,
\begin{equation}\label{harmo-repres}
B(t,x_1,\dots,x_d):= c\int_{\xi \in \mathbb{R}}\int_{\eta \in \mathbb{R}^d} \frac{e^{\imath t\xi}-1}{|\xi|^{H_0+\frac{1}{2}}}\prod_{i=1}^{d}\frac{e^{\imath x_i\eta_i}-1}{|\eta_i|^{H_i+\frac{1}{2}}}\, \widehat{W}(d\xi,d\eta)\, ,
\end{equation} 
for some constant $c$, and where $\widehat{W}$ stands for the Fourier transform of $W$. 

\smallskip

It is a well-established fact (see e.g.~\cite{harmo}) that for some appropriate value $c=c_H$ of the constant, the so-defined process $B$ is a space-time fractional Brownian motion of index $H$ (in the sense of Definition~\ref{def:fbm}). Our approximation of $B$ (for every fixed $H=(H_0,H_1,\dots,H_d)\in (0,1)^{d+1}$) is now obtained through a basic truncation of the integration domain in~\eqref{harmo-repres}: namely, we set $B_0\equiv 0$, and for $n\geq 1$,
\begin{equation}\label{defi:approx-b-n}
B_n(t,x_1,\dots,x_d)(\omega):= c_H\int_{|\xi|\leq 2^{2n}}^{}\int_{|\eta|\leq 2^n} \frac{e^{\imath t\xi}-1}{|\xi|^{H_0+\frac{1}{2}}}\prod_{i=1}^{d}\frac{e^{\imath x_i\eta_i}-1}{|\eta_i|^{H_i+\frac{1}{2}}}\, \widehat{W}(d\xi,d\eta)\, .
\end{equation}
By a quick examination of the possible singularities in $(\xi,\eta)$, it is not hard to see that, owing to the restricted integration domain, $B_n$ indeed defines a smooth process, for every fixed $n\geq 0$.

\begin{remark}
The choice of the scaling $\{|\xi|\leq 2^{2n}, |\eta|\leq 2^n\}$ in~\eqref{defi:approx-b-n} is directly related to the structure of the Schr{\"o}dinger operator, and will prove to be essential in the estimation of the renormalization constant (see Section~\ref{subsec:renorm-cstt}). This choice naturally contrasts with the \enquote{hyperbolic} scaling used in~\cite{deya-wave} for the corresponding wave model (see also Remark~\ref{rk:compari-lin}). Note also that the approximation \eqref{defi:approx-b-n} is the same as the one used in \cite{deya} for the study of a (rough) parabolic model.
\end{remark}

\smallskip

With approximation $(B_n)_{n\geq 0}$ in hand, we now would like to consider the sequence $(\<Psi>_n)_{n\geq 0}$ of approximated linear solutions, that is the sequence of solutions to 
\begin{equation}\label{regu}
 \left\{
    \begin{array}{ll}
  \imath\partial_t\<Psi>_n-\Delta \<Psi>_n = \dot B_n\, , \hskip 0.3 cm t \in [0,T]\, , \hspace{0,2cm} x \in \mathbb{R}^d\, ,   \\
 \  \<Psi>_n(0,.)=0\, ,
    \end{array}
\right.    
\end{equation}
where, for every $n\geq 0$, $\dot{B}_n$ is defined as the standard derivative $\dot{B}_n:=\partial_t\partial_{x_1}\cdots \partial_{x_d}B_n$. 

\smallskip

Note however that, without further integrability assumptions, the smoothness of $B_n$ (for each fixed $n\geq 0$) is not a sufficient condition to apply classical deterministic results immediately ensuring existence and uniqueness of $\<Psi>_n$. A possible way to circumvent the problem in this case is to rely on some stochastic interpretation of $\<Psi>_n$, based on the Gaussianity of the processes under consideration.

\smallskip

In order to justify this interpretation (that is, Definition~\ref{defi:luxo-n} below), let us go back to formula~\eqref{defi:approx-b-n} for $B_n$. Denoting by $S$ the $d$-dimensional Schr{\"o}dinger group and applying a Fubini-type theorem, the solution $\<Psi>_n$ can (at least formally) be written as
\small
\begin{align*}
&\<Psi>_n(t,x)=-\imath\int_{0}^{t}ds\int_{\mathbb{R}^d}dy\, S_{t-s}(x-y)\dot B_n(s,y)\\ 
&=c_H\imath^{d}\int_{0}^{t}ds\int_{\mathbb{R}^d}dy\, \int_{|\xi|\leq 2^{2n}}\int_{|\eta|\leq 2^n}S_{t-s}(x-y)\frac{\xi}{|\xi|^{H_0+\frac{1}{2}}}\prod_{i=1}^{d}\frac{\eta_i}{|\eta_i|^{H_i+\frac{1}{2}}}e^{\imath\xi s}e^{\imath\langle \eta,y \rangle}\widehat{W}(d\xi,d\eta)\\
&= c_H\imath^{d}\int_{|\xi|\leq 2^{2n}}^{}\int_{|\eta|\leq 2^n}\frac{\xi}{|\xi|^{H_0+\frac{1}{2}}}\prod_{i=1}^{d}\frac{\eta_i}{|\eta_i|^{H_i+\frac{1}{2}}}e^{\imath\langle \eta,x \rangle}\left[\int_{0}^{t}ds \, e^{\imath\xi s}\left(\int_{\mathbb{R}^d}dy\, S_{t-s}(x-y)e^{-\imath\langle \eta,x-y \rangle}\right)\right]\widehat{W}(d\xi,d\eta)\\
&= c_H\imath^{d}\int_{|\xi|\leq 2^{2n}}^{}\int_{|\eta|\leq 2^n}\frac{\xi}{|\xi|^{H_0+\frac{1}{2}}}\prod_{i=1}^{d}\frac{\eta_i}{|\eta_i|^{H_i+\frac{1}{2}}}e^{\imath\langle \eta,x \rangle}\left[\int_{0}^{t}ds \, e^{\imath\xi(t-s)}\left(\int_{\mathbb{R}^d}dy\, S_{s}(y)e^{-\imath\langle \eta,y \rangle}\right)\right]\widehat{W}(d\xi,d\eta)\, .
\end{align*}
\normalsize
At this point, remember that the spatial Fourier transform of $S$ is explicitly given by
$$\int_{\mathbb{R}^d}dx \, e^{-\imath\langle \xi,x \rangle}S_{t}(x)=e^{\imath t|\xi|^2}\, ,$$
and so we end up with the (a priori formal) representation
\begin{equation}\label{formula-heuris}
\<Psi>_n(t,x)=c_H\imath^{d}\int_{|\xi|\leq 2^{2n}}^{}\int_{|\eta|\leq 2^n}\frac{\xi}{|\xi|^{H_0+\frac{1}{2}}}\prod_{i=1}^{d}\frac{\eta_i}{|\eta_i|^{H_i+\frac{1}{2}}}e^{\imath\langle \eta,x \rangle}\gamma_t(\xi,|\eta|)\, \widehat{W}(d\xi,d\eta)\, ,
\end{equation}
where for all $t\geq 0$, $\xi \in \mathbb{R}$ and $r>0$, we define the quantity $\gamma_t(\xi,r)$ as 
\begin{equation}\label{defi-ga-t}
\gamma_t(\xi,r):=e^{\imath\xi t}\int_{0}^{t}e^{\imath s \{r^2-\xi\} }ds \, .
\end{equation}

\

Thanks to formula~\eqref{formula-heuris}, we are in a position to offer the following natural (and rigorous) stochastic definition for the solution of~\eqref{regu}:
\begin{definition}\label{defi:luxo-n}
We call a solution of equation~\eqref{regu} (or \emph{linear solution associated with~\eqref{dim-d}}) any centered complex Gaussian process 
$$\big\{\<Psi>_n(s,x), \, n\geq 1, \, s\geq 0, \, x\in \R^d \big\}$$
whose covariance is given by the relations: for all $n,m\geq 1$, $s, t\geq 0$ and $x, y \in \mathbb{R}^d$,
\begin{equation}\label{cova-luxo}
\mathbb{E}\Big[\<Psi>_n(s,x)\overline{\<Psi>_m(t,y)}\Big]=c_H^2\int_{(\xi,\eta)\in D_n \cap D_m}\frac{1}{|\xi|^{2H_0-1}}\prod\limits_{i=1}^{d}\frac{1}{|\eta_i|^{2H_i-1}}\gamma_{s}(\xi,|\eta|)\overline{\gamma_{t}(\xi,|\eta|)}e^{\imath \langle \eta, x-y \rangle}\, d\xi d\eta \, ,
\end{equation}
\begin{equation}\label{cova-luxo-2}
\mathbb{E}\Big[\<Psi>_n(s,x)\<Psi>_m(t,y)\Big]=-c_H^2\int_{(\xi,\eta)\in D_n \cap D_m}\frac{1}{|\xi|^{2H_0-1}}\prod\limits_{i=1}^{d}\frac{1}{|\eta_i|^{2H_i-1}}\gamma_{s}(\xi,|\eta|)\gamma_{t}(-\xi,|\eta|)e^{\imath \langle \eta, x-y \rangle}\, d\xi d\eta \, ,
\end{equation}
where $D_n:=B_{2n}^1 \times B_n^d$ with $B_\ell^k:=\left\{ \lambda \in \mathbb{R}^k: |\lambda| \leq 2^\ell \right\}$.
\end{definition}

\

\subsection{Preliminary estimates}

As a first step toward Proposition~\ref{sto} and Proposition~\ref{sto1}, let us collect some essential estimates on the quantity $\gamma_t(\xi,r)$ at the core of the covariance formula~\eqref{cova-luxo} (and explicitly defined by~\eqref{defi-ga-t}).

\smallskip

To this end, we set, for all $0\leq s \leq t$, $\xi \in \mathbb{R}$ and $r>0$,
\begin{equation*}
\gamma_{s,t}(\xi,r):=\gamma_t(\xi,r)-\gamma_s(\xi,r) \, .
\end{equation*}

\smallskip

\begin{lemma}\label{lem}
For all $T\geq 0$, $0\leq s \leq t\leq T$, $\xi \in \mathbb{R}$, $r>0$ and $\kappa, \lambda \in [0,1]$, it holds that 
$$
|\gamma_{s,t}(\xi,r)|\lesssim \min\bigg( |\xi|^{\kappa}|t-s|^{\kappa} + |t-s|,\frac{|t-s| r^2 }{|\xi|}+  \frac{|t-s|^{\kappa}\{1+r^2\}}{|\xi|^{1-\kappa}},\frac{|t-s|^{\kappa}\{r^{2\kappa}+|\xi|^{\kappa}\}}{||\xi|-r^2|^{1-\lambda(1-\kappa)}}\bigg)\, ,
$$
where the proportional constant in $\lesssim$ only depends on $T$.
\end{lemma}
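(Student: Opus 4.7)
The plan is to start from the closed-form evaluation of the defining integral: direct computation gives
\[\gamma_t(\xi,r)=e^{\imath\xi t}\int_0^t e^{\imath s(r^2-\xi)}\,ds=\frac{e^{\imath tr^2}-e^{\imath\xi t}}{\imath(r^2-\xi)},\]
so that $\gamma_{s,t}(\xi,r)=\frac{N}{\imath(r^2-\xi)}$ with $N:=(e^{\imath tr^2}-e^{\imath sr^2})-(e^{\imath\xi t}-e^{\imath\xi s})$. I would keep in parallel the equivalent telescoping form
\[\gamma_{s,t}(\xi,r)=e^{\imath\xi t}\int_s^t e^{\imath s'(r^2-\xi)}\,ds'+(e^{\imath\xi t}-e^{\imath\xi s})\int_0^s e^{\imath s'(r^2-\xi)}\,ds',\]
together with the single elementary inequality $|e^{\imath a}-e^{\imath b}|\leq\min(|a-b|,2)\leq 2^{1-\kappa}|a-b|^\kappa$ valid for $\kappa\in[0,1]$. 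The three estimates inside the $\min$ will each come from a different use of these two expressions, and the lemma will follow by taking the minimum.

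For the first bound I would use the telescoping form: the first integral has modulus $\leq|t-s|$, while in the second piece the integral is crudely controlled by its length $s\leq T$ and the prefactor by $|e^{\imath\xi t}-e^{\imath\xi s}|\lesssim|\xi|^\kappa|t-s|^\kappa$, producing $|\gamma_{s,t}|\lesssim|\xi|^\kappa|t-s|^\kappa+|t-s|$. For the third bound, applying the interpolation inequality to both differences in $N$ and using $|r^2-\xi|\geq||\xi|-r^2|$ gives the case $\lambda=0$ directly. The case $\lambda=1$ (denominator $||\xi|-r^2|^\kappa$) is then handled by splitting on whether $||\xi|-r^2|\geq 1$ or $||\xi|-r^2|<1$: the former follows at once from the $\lambda=0$ estimate since $\kappa\leq 1$, while the latter follows from the first bound combined with the elementary inequality $(1+|\xi|^\kappa)||\xi|-r^2|^\kappa\lesssim r^{2\kappa}+|\xi|^\kappa$, itself the result of a short case analysis on whether $|\xi|$ is bounded or large. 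The full family $\lambda\in(0,1)$ is finally recovered from the two endpoint bounds by the multiplicative interpolation $|\gamma_{s,t}|=|\gamma_{s,t}|^{1-\lambda}\cdot|\gamma_{s,t}|^\lambda$, which produces exactly the exponent $1-\lambda(1-\kappa)$ on $||\xi|-r^2|$.

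The second bound is the delicate one and is where I expect the main obstacle to lie. The plan is to exploit the partial fraction
\[\frac{1}{\imath(r^2-\xi)}=-\frac{1}{\imath\xi}+\frac{r^2}{\imath\xi(r^2-\xi)}\]
inside the closed form of $\gamma_{s,t}$ to derive the self-referential identity $\gamma_{s,t}(\xi,r)=-\frac{N}{\imath\xi}+\frac{r^2}{\xi}\gamma_{s,t}(\xi,r)$. Taking moduli and using $|e^{\imath tr^2}-e^{\imath sr^2}|\leq r^2|t-s|$ together with $|e^{\imath\xi t}-e^{\imath\xi s}|\lesssim|\xi|^\kappa|t-s|^\kappa$ then yields
\[|\gamma_{s,t}(\xi,r)|\leq\frac{r^2|t-s|}{|\xi|}+\frac{C\,|t-s|^\kappa}{|\xi|^{1-\kappa}}+\frac{r^2}{|\xi|}\,|\gamma_{s,t}(\xi,r)|.\]
A case split on the ratio $r^2/|\xi|$ closes the estimate: in the regime $r^2\leq|\xi|/2$ the last term is absorbed on the left, producing bound~(2) directly; in the complementary regime $r^2>|\xi|/2$ I would invoke the first bound already proven, controlling its $|t-s|$ summand by $\frac{r^2|t-s|}{|\xi|}$ (via $|\xi|\leq 2r^2$) and its $|\xi|^\kappa|t-s|^\kappa$ summand by $\frac{(1+r^2)|t-s|^\kappa}{|\xi|^{1-\kappa}}$ (via $|\xi|\leq 2(1+r^2)$). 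Unlike bounds~(1) and~(3), bound~(2) is thus not a direct consequence of the closed-form expression; it requires both the algebraic extraction of a $1/\xi$ factor (opening room for self-absorption) and this subsequent case analysis, which is precisely why I single it out as the main difficulty.
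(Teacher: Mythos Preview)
Your proof is correct, but for bounds (2) and (3) it takes a more circuitous route than the paper.

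For bound (2), the paper uses the same partial-fraction identity you found, but instead of closing it as a self-referential inequality, it keeps the integral form of $\gamma$ on the right-hand side. Writing
\[
\gamma_{s,t}(\xi,r)=-\frac{N}{\imath\xi}+\frac{r^2}{\xi}(e^{\imath\xi t}-e^{\imath\xi s})\int_0^s e^{\imath u(r^2-\xi)}\,du+\frac{e^{\imath\xi t}r^2}{\xi}\int_s^t e^{\imath u(r^2-\xi)}\,du,
\]
the second and third terms are bounded directly by $T\,r^2|\xi|^{\kappa-1}|t-s|^\kappa$ and $r^2|\xi|^{-1}|t-s|$ respectively, and the result drops out with no absorption and no case split on $r^2/|\xi|$. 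Your approach works, but the step you flagged as the ``main difficulty'' disappears entirely if you expand the integral rather than reinserting $\gamma_{s,t}$.

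For bound (3), the paper handles all $\lambda\in[0,1]$ in one stroke by writing $|N|=|N|^\kappa|N|^{1-\kappa}$ and using the two equivalent groupings of the four exponentials: the time-grouping $(e^{\imath tr^2}-e^{\imath sr^2})-(e^{\imath\xi t}-e^{\imath\xi s})$ gives $|N|^\kappa\lesssim|t-s|^\kappa(r^{2\kappa}+|\xi|^\kappa)$, while the frequency-grouping $(e^{\imath tr^2}-e^{\imath\xi t})-(e^{\imath sr^2}-e^{\imath\xi s})$ gives $|N|^{1-\kappa}\lesssim|r^2-\xi|^{\lambda(1-\kappa)}$ directly. This avoids your separate treatment of $\lambda=1$ via the case split on $||\xi|-r^2|\gtrless 1$ (which is correct, though the ``short case analysis'' for the inequality $(1+|\xi|^\kappa)||\xi|-r^2|^\kappa\lesssim r^{2\kappa}+|\xi|^\kappa$ requires a bit more care than you indicate) and the subsequent multiplicative interpolation. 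The paper's trick of splitting $|N|$ as a product rather than interpolating the final bound is worth remembering.
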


\begin{proof} To begin with, let us write
$$\gamma_{s,t}(\xi,r)=\{e^{\imath\xi t}-e^{\imath\xi s}\}\int_{0}^{s}e^{\imath u \{r^2-\xi\} }du + e^{\imath \xi t}\int_{s}^{t}e^{\imath u\{r^2-\xi\}}du \, ,$$ 
and so
$$
|\gamma_{s,t}(\xi,r)|\lesssim |e^{\imath\xi t}-e^{\imath\xi s}|\bigg|\int_{0}^{s}e^{\imath u\{r^2-\xi\}}du\bigg| +\bigg|\int_{s}^{t}e^{\imath u\{r^2-\xi\}}du\bigg|\lesssim |\xi|^{\kappa}|t-s|^{\kappa} + |t-s|\, .
$$
Then observe that 
$$\gamma_{t}(\xi,r)=e^{\imath\xi t}\int_{0}^{t}e^{\imath s \{r^2-\xi\} }ds=-\frac{e^{\imath r^2 t}-e^{\imath \xi t}}{\imath\xi}+\frac{e^{\imath \xi t}r^2}{\xi}\int_{0}^{t}e^{\imath s \{r^2-\xi\} }ds \, , $$
which readily entails 
\begin{align*}
&\gamma_{s,t}(\xi,r)\\
&=-\frac{\{e^{\imath r^2 t}-e^{\imath r^2 s}\}-\{e^{\imath \xi t}-e^{\imath \xi s}\}}{\imath\xi}+\frac{r^2}{\xi}\{e^{\imath\xi t}-e^{\imath\xi s}\}\int_{0}^{s}e^{\imath u \{r^2-\xi\} }du+\frac{e^{\imath \xi t}r^2}{\xi}\int_{s}^{t}e^{\imath u \{r^2-\xi\} }du \, .
\end{align*}
Thus,
\begin{eqnarray*}
|\gamma_{s,t}(\xi,r)|&\lesssim& r^2 \frac{|t-s|}{|\xi|}+\frac{|t-s|^{\kappa}}{|\xi|^{1-\kappa}}+r^2\frac{|t-s|^{\kappa}}{|\xi|^{1-\kappa}}+\frac{r^2}{|\xi|}|t-s|\\
&\lesssim& r^2 \frac{|t-s|}{|\xi|}+ \{1+r^2\} \frac{|t-s|^{\kappa}}{|\xi|^{1-\kappa}}\, .
\end{eqnarray*}
Finally, it can be checked that 
$$\gamma_{t}(\xi,r)=\frac{\imath}{r^2-\xi}\{e^{\imath \xi t}-e^{\imath r^2 t}\}\, ,$$
which yields
\begin{eqnarray*}
|\gamma_{s,t}(\xi,r)|&=& \frac{1}{|\xi-r^2|}\Big|\{e^{\imath r^2 t}-e^{\imath r^2 s}\}-\{e^{\imath \xi t}-e^{\imath \xi s}\}\Big|^{\kappa}\Big|\{e^{\imath r^2 t}-e^{\imath \xi t}\}-\{e^{\imath r^2 s}-e^{\imath \xi s}\}\Big|^{1-\kappa}\\
&\lesssim&\frac{|t-s|^{\kappa}}{|\xi-r^2|}\{r^2+|\xi|\}^{\kappa}\{|e^{\imath r^2 t}-e^{\imath \xi t}|^{\lambda}+|e^{\imath r^2 s}-e^{\imath \xi s}|^{\lambda}\}^{1-\kappa}\\
&\lesssim&\frac{|t-s|^{\kappa}}{|\xi-r^2|^{1-\lambda(1-\kappa)}}\{r^{2\kappa}+|\xi|^{\kappa}\}\ \lesssim \ \frac{|t-s|^{\kappa}}{||\xi|-r^2|^{1-\lambda(1-\kappa)}}\{r^{2\kappa}+|\xi|^{\kappa}\} \, .
\end{eqnarray*}
\end{proof}

\begin{corollary}\label{tec}
For all $T\geq 0$, $0\leq s \leq t \leq T$, $H\in (0,1)$, $r>0$, $\varepsilon\in (0,1)$ and $\kappa\in [0, \min(H, \frac{1-\varepsilon}{2}))$, it holds that
$$ \int_{\mathbb{R}}\frac{|\gamma_{s,t}(\xi,r)|^2}{|\xi|^{2H-1}}\, d\xi \lesssim \frac{|t-s|^{2\kappa}}{1+r^{4(H-\kappa)-2-2\varepsilon}}\, ,$$
where the proportional constant in $\lesssim$ only depends on $T$.
\end{corollary}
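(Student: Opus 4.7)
The plan is to split the domain $\mathbb{R}$ into three pieces depending on the relative sizes of $|\xi|$, $r^2$ and $1$, and in each piece apply the sharpest of the three inequalities furnished by Lemma~\ref{lem}. Concretely, I would introduce a small-frequency region $A_1=\{|\xi|\leq 1\}$, a resonant region $A_2=\{r^2/2\leq |\xi|\leq 2r^2\}$ (which is only relevant when $r^2\geq 2$), and a far-field region $A_3=\mathbb{R}\setminus(A_1\cup A_2)$, and then estimate each contribution separately.

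On $A_1$ I would use the first estimate of the lemma, $|\gamma_{s,t}|\lesssim |\xi|^{\kappa}|t-s|^{\kappa}+|t-s|$. Squaring and using $|t-s|^2\lesssim_T |t-s|^{2\kappa}$, together with the local integrability of the weight $|\xi|^{-(2H-1)}$ (which holds since $H<1$), one immediately gets a bound of order $|t-s|^{2\kappa}$, compatible with the right-hand side. On $A_3$ the third estimate with $\lambda=0$ gives $|\gamma_{s,t}|^{2}\lesssim |t-s|^{2\kappa}|\xi|^{2\kappa-2}$ once $|\xi|\gtrsim r^2$, and since $\kappa<H$ the remaining radial integral converges and produces a contribution of order $|t-s|^{2\kappa}r^{4\kappa-4H}$, which is dominated by the resonant term.

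The heart of the matter is the resonant region $A_2$, and this is the step I expect to be the main obstacle. Here I would use the third bound with the parameter $\lambda$ calibrated so that $\lambda(1-\kappa)=(1+\varepsilon)/2$; the assumption $\kappa<(1-\varepsilon)/2$ guarantees that this value of $\lambda$ lies in $(1/2,1]$. This choice achieves two things at once: it keeps $2(1-\lambda(1-\kappa))=1-\varepsilon<1$, so the singular factor $||\xi|-r^2|^{-(1-\varepsilon)}$ stays integrable across the resonance and contributes an $r^{2\varepsilon}$ factor, and it simultaneously minimises the resulting power of $r$. Combining this with $|\xi|^{1-2H}\sim r^{2-4H}$ and $r^{2\kappa}+|\xi|^{\kappa}\sim r^{2\kappa}$ on $A_2$ produces precisely an estimate of the form $|t-s|^{2\kappa}\,r^{4\kappa-4H+2+2\varepsilon}$, matching the denominator $1+r^{4(H-\kappa)-2-2\varepsilon}$.

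Summing the three contributions and reconciling the cases $r\leq 1$ (where the denominator is of order $1$ and only the $A_1$-type bound is needed) and $r\geq 1$ (where the $A_2$ estimate drives the right-hand side) then yields the claimed inequality. The key difficulty throughout is the tension in $A_2$ between integrability, which forces $\lambda(1-\kappa)>1/2$, and sharpness in $r$, which would like $\lambda(1-\kappa)$ as close to $1/2$ as possible; the parameter $\varepsilon$ is exactly the quantitative expression of this trade-off, and the hypothesis $\kappa<\min(H,(1-\varepsilon)/2)$ is what ensures that a compromise is available.
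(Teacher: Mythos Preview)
Your approach is essentially the same as the paper's: both arguments pivot on the third bound of Lemma~\ref{lem} with the calibration $\lambda(1-\kappa)=(1+\varepsilon)/2$ in the resonant zone $|\xi|\sim r^2$, and both use $\lambda=0$ away from resonance. The paper phrases the decomposition for $r>1$ as $||\xi|-r^2|\geq|\xi|/2$ versus $||\xi|-r^2|\leq|\xi|/2$ (equivalent to your $A_2$/$A_3$ split up to constants) and cleans up the off-resonant integral by the substitution $\xi\mapsto r^2\xi$.

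One small gap in your outline: your $A_3$ analysis only addresses $|\xi|\gtrsim r^2$, but by definition $A_3$ also contains the intermediate range $1<|\xi|<r^2/2$ when $r$ is large. There the same $\lambda=0$ bound still applies since $||\xi|-r^2|\geq r^2/2$, and after integrating $|\xi|^{1-2H}$ over $[1,r^2/2]$ one gets a contribution of order at most $|t-s|^{2\kappa}r^{4\kappa-4H}$, which is again dominated by the resonant term. This is easily patched and does not alter the strategy.
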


\begin{proof}
We will naturally lean on the estimates exhibited in Lemma~\ref{lem}.

\smallskip

For $0<r<1$, we have
$$\int_{\mathbb{R}}\frac{|\gamma_{s,t}(\xi,r)|^2}{|\xi|^{2H-1}}\, d\xi\lesssim |t-s|^{2\kappa}\bigg[\int_{|\xi|\leq 1}\frac{d\xi}{|\xi|^{2H-1}}+\int_{|\xi|\geq 1}\frac{d\xi}{|\xi|^{2(H-\kappa)+1}}\bigg]\lesssim |t-s|^{2\kappa}\, .$$
Then, for $r>1$, let us consider the decomposition 
$$\int_{\mathbb{R}}\frac{|\gamma_{s,t}(\xi,r)|^2}{|\xi|^{2H-1}}\, d\xi=  \int_{||\xi|-r^2|\geq \frac{|\xi|}{2}}\frac{|\gamma_{s,t}(\xi,r)|^2}{|\xi|^{2H-1}}\, d\xi+ \int_{||\xi|-r^2|\leq \frac{|\xi|}{2}}\frac{|\gamma_{s,t}(\xi,r)|^2}{|\xi|^{2H-1}}\, d\xi\, .$$
On the one hand, it holds that
\begin{eqnarray*}
\int_{||\xi|-r^2|\geq \frac{|\xi|}{2}}\frac{|\gamma_{s,t}(\xi,r)|^2}{|\xi|^{2H-1}}\, d\xi &\lesssim& |t-s|^{2\kappa}\int_{\left\{|\xi|\leq \frac{2}{3}r^2\right\}\cup \left\{ |\xi|\geq 2r^2\right\}} \frac{r^{4\kappa}+|\xi|^{2\kappa}}{|\xi|^{2H-1}||\xi|-r^2|^2}\, d\xi \\
&\lesssim& \frac{|t-s|^{2\kappa}}{r^{4(H-\kappa)}}\int_{\left\{|\xi|\leq \frac{2}{3}\right\}\cup \left\{ |\xi|\geq 2\right\}} \frac{1+|\xi|^{2\kappa}}{|\xi|^{2H-1}||\xi|-1|^2}\, d\xi \ \lesssim \ \frac{|t-s|^{2\kappa}}{r^{4(H-\kappa)}}\, .
\end{eqnarray*}
On the other hand, setting $\lambda=\frac{1+\varepsilon}{2(1-\kappa)}$, one has 
\begin{eqnarray*}
\int_{||\xi|-r^2|\leq \frac{|\xi|}{2}}\frac{|\gamma_{s,t}(\xi,r)|^2}{|\xi|^{2H-1}}d\xi &\lesssim& |t-s|^{2\kappa}\int_{\left\{\frac{2}{3} r^2\leq |\xi| \leq 2r^2\right\}} \frac{r^{4\kappa}+|\xi|^{2\kappa}}{|\xi|^{2H-1}||\xi|-r^2|^{2-2\lambda(1-\kappa)}}\, d\xi\\
&\lesssim& \frac{|t-s|^{2\kappa}}{r^{4(H-\kappa)-4\lambda(1-\kappa)}}\int_{\left\{\frac{2}{3} \leq |\xi| \leq 2\right\}} \frac{1}{|\xi|^{2H-1}||\xi|-1|^{2-2\lambda(1-\kappa)}}\, d\xi \\
&\lesssim& \frac{|t-s|^{2\kappa}}{r^{4(H-\kappa)-2-2\varepsilon}} \, .
\end{eqnarray*}
\end{proof}

\smallskip

Let us also take advantage of this preliminary section to introduce the following lemma, which accounts for the technical simplifications offered by the test function $\chi$ in the forthcoming computations.

\begin{lemma}\label{lem:handle-chi-correc}
Let $\chi:\R^d \to \R$ be a test function and fix $\si\in \R$. Then, for every $p\geq 1$ and for all $\eta_1,\ldots,\eta_p\in \R^d$, it holds that
$$\bigg| \int_{\R^d} dx \, \prod_{i=1}^p \int_{(\R^d)^2} \frac{d\la_i d\lati_i}{\{1+|\la_i|^2\}^{\frac{\si}{2}}\{1+|\lati_i|^2\}^{\frac{\si}{2}}} e^{\imath \langle x,\la_i-\lati_i\rangle} \widehat{\chi}(\la_i-\eta_i)\overline{\widehat{\chi}(\lati_i-\eta_i)}  \bigg| \lesssim \prod_{i=1}^p \frac{1}{\{1+|\eta_i|^2\}^\si} \, ,$$
where the proportional constant only depends on $\chi$ and $\si$. 
\end{lemma}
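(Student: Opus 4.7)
The plan is to observe that, for each fixed $i$, the double integral over $(\la_i,\lati_i)$ inside the product factorizes as $|g_i(x)|^2$, where
$$g_i(x) := \int_{\R^d} \frac{\widehat{\chi}(\la-\eta_i)}{\{1+|\la|^2\}^{\si/2}}\, e^{\imath\langle x,\la\rangle}\, d\la.$$
Since $\widehat{\chi}$ is a Schwartz function, the $\la_i$- and $\lati_i$-integrals defining the inner expression are absolutely convergent (even for negative $\si$, the polynomial factor $\{1+|\la|^2\}^{-\si/2}$ is dominated by the rapid decay of $\widehat{\chi}(\la-\eta_i)$), so Fubini separates the two variables; the $\lati_i$-integral then produces precisely $\overline{g_i(x)}$. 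The quantity on the left-hand side thus reduces to the nonnegative integral $\int_{\R^d}\prod_{i=1}^p |g_i(x)|^2\, dx$, and the outer modulus may be dropped.

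Next I would establish the two estimates
$$\|g_i\|_{L^\infty(\R^d)}\lesssim \{1+|\eta_i|^2\}^{-\si/2},\qquad \|g_i\|_{L^2(\R^d)}^2\lesssim \{1+|\eta_i|^2\}^{-\si},$$
with constants depending only on $\chi$ and $\si$. Both follow from the change of variables $\mu=\la-\eta_i$ (which rewrites $g_i(x)=e^{\imath\langle x,\eta_i\rangle}\int \widehat\chi(\mu)\{1+|\mu+\eta_i|^2\}^{-\si/2} e^{\imath\langle x,\mu\rangle}\,d\mu$) combined with Peetre's inequality
$$\{1+|\mu+\eta_i|^2\}^{-\si}\leq 2^{|\si|}\{1+|\mu|^2\}^{|\si|}\{1+|\eta_i|^2\}^{-\si},$$
which is valid for either sign of $\si$. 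The $L^\infty$ bound then follows from the finiteness of $\int \{1+|\mu|^2\}^{|\si|/2}|\widehat\chi(\mu)|\,d\mu$, while the $L^2$ bound is obtained by Plancherel on the defining integral of $g_i$ followed by the same Peetre estimate.

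Finally I would conclude by putting $p-1$ of the factors $|g_i|^2$ in $L^\infty$ and the last one in $L^1$:
$$\int_{\R^d}\prod_{i=1}^p |g_i(x)|^2\, dx\leq \bigg(\prod_{i=1}^{p-1}\|g_i\|_{L^\infty}^2\bigg)\|g_p\|_{L^2}^2\lesssim \prod_{i=1}^p \{1+|\eta_i|^2\}^{-\si},$$
which is the announced bound. No step is a genuine obstacle; the only point requiring a little care is to keep Peetre's inequality uniform in the sign of $\si$, which is why the factor $\{1+|\mu|^2\}^{|\si|}$ (rather than $\{1+|\mu|^2\}^{\si}$) appears in the estimate above.
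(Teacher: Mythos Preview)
Your proof is correct and follows essentially the same strategy as the paper: both recognize that the integrand factors as $\prod_i |g_i(x)|^2$ (the paper writes $g_i=c\,\cf^{-1}(\gga_{\eta_i})$ with $\gga_\eta(\la)=\widehat{\chi}(\la-\eta)\{1+|\la|^2\}^{-\si/2}$), and both separate the product by placing $p-1$ factors in $L^\infty$ and one in $L^2$. The differences are cosmetic: where you apply H\"older directly in physical space, the paper passes through the convolution identity $\prod_i \cf^{-1}(\gga_{\eta_i})=c\,\cf^{-1}(\gga_{\eta_1}\ast\cdots\ast\gga_{\eta_p})$, then Plancherel and Young's inequality $\|\gga_{\eta_1}\ast\cdots\ast\gga_{\eta_p}\|_{L^2}\le \|\gga_{\eta_1}\|_{L^1}\cdots\|\gga_{\eta_{p-1}}\|_{L^1}\|\gga_{\eta_p}\|_{L^2}$---the Fourier dual of your argument. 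For the individual bounds, your appeal to Peetre's inequality is cleaner than the paper's case split on $\{|\la|\ge\frac12|\eta|\}$ versus $\{|\la|<\frac12|\eta|\}$, though both yield the same pointwise estimate $|\widehat{\chi}(\mu)|\{1+|\mu+\eta|^2\}^{-\si/2}\lesssim \{1+|\mu|^2\}^{-N}\{1+|\eta|^2\}^{-\si/2}$.
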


\begin{proof}
Let us set, for all $\eta,\la\in \R^d$,
$$\gga_\eta(\la):=\widehat{\chi}(\la-\eta) \frac{1}{\{1+|\la|^2\}^{\frac{\si}{2}}} \, $$
and observe that the integral under consideration can then be written as
\begin{align}
&\int_{\R^d} dx \, \prod_{i=1}^p \int_{(\R^d)^2} \frac{d\la_i d\lati_i}{\{1+|\la_i|^2\}^{\frac{\si}{2}}\{1+|\lati_i|^2\}^{\frac{\si}{2}}} e^{\imath \langle x,\la_i-\lati_i\rangle} \widehat{\chi}(\la_i-\eta_i)\overline{\widehat{\chi}(\lati_i-\eta_i)} \nonumber\\
&=c\int_{\R^d} dx \, \prod_{i=1}^p  \big| \cf^{-1}\big( \gga_{\eta_i}\big)(x) \big|^2\nonumber\\
&=c\int_{\R^d} dx \,  \big| \cf^{-1}\big( \gga_{\eta_1}\ast \cdots \ast \gga_{\eta_p}\big)(x) \big|^2\nonumber\\
&\lesssim \big\| \gga_{\eta_1}\ast \cdots \ast \gga_{\eta_p} \big\|_{L^2(\R^d)}^2 \lesssim \big\| \gga_{\eta_1}\big\|_{L^1(\R^d)}^2 \cdots \big\|  \gga_{\eta_{p-1}} \big\|_{L^1(\R^d)}^2\big\| \gga_{\eta_p} \big\|_{L^2(\R^d)}^2 \, ,\label{rie-tho}
\end{align}
where the last inequality is derived from Plancherel theorem.

\smallskip

The conclusion now comes from the fact that for all $\eta,\la\in \R^d$ and for every $\ka >0$,
\begin{equation}\label{boun-chi-lem-correc}
\big|\widehat{\chi}(\la) \{1+|\la+\eta|^2\}^{-\frac{\si}{2}}\big| \leq c_{\si,\chi,\ka} \{1+|\la|^2\}^{-\ka} \{1+|\eta|^2\}^{-\frac{\si}{2}} \, .
\end{equation}
Let us briefly verify~\eqref{boun-chi-lem-correc} for $\si \geq 0$ (the proof for $\si<0$ being immediate). In fact, since $\chi$ is smooth and compactly-supported, one has
\begin{align}
&\big| \hat{\chi}(\la)\{1+|\la+\eta|^2\}^{-\frac{\si}{2}} \big|\nonumber\\
&=\big| \hat{\chi}(\la)\{1+|\la+\eta|^2\}^{-\frac{\si}{2}} \big| \mathbf{1}_{\{|\la|\geq\frac12 |\eta|\}}+\big| \hat{\chi}(\la)\{1+|\la+\eta|^2\}^{-\frac{\si}{2}} \big| \mathbf{1}_{\{|\la|<\frac12 |\eta|\}}\nonumber\\
&\leq c_\si \Big[ |\hat{\chi}(\la)| \mathbf{1}_{\{|\la|\geq\frac12 |\eta|\}}+|\hat{\chi}(\la)|\{1+|\eta|^2\}^{-\frac{\si}{2}}  \mathbf{1}_{\{|\la|<\frac12 |\eta|\}}\Big]\nonumber\\
&\leq c_{\si,\chi,\ka}\Big[ \{1+|\la|^2\}^{-\ka} \{1+|\la|^2\}^{-\frac{\si}{2}} \mathbf{1}_{\{|\la|\geq\frac12 |\eta|\}}+\{1+|\la|^2\}^{-\ka}\{1+|\eta|^2\}^{-\frac{\si}{2}}  \mathbf{1}_{\{|\la|<\frac12 |\eta|\}}\Big]\nonumber\\
&\leq c_{\si,\chi,\ka} \{1+|\la|^2\}^{-\ka} \{1+|\eta|^2\}^{-\frac{\si}{2}} \, .\nonumber
\end{align}
Going back to~\eqref{rie-tho}, and with estimate~\eqref{boun-chi-lem-correc} in hand, we can check for instance that
$$\big\| \gga_{\eta_1}\big\|_{L^1(\R^d)} =\int_{\R^d} d\la \, \big|\widehat{\chi}(\la)\big| \frac{1}{\{1+|\la+\eta_1|^2\}^{\frac{\si}{2}}}\leq \frac{c_{\si,\chi}}{\{1+|\eta_1|^2\}^{\frac{\si}{2}}} \, ,$$
which yields the desired bound.
\end{proof}

\

\subsection{Proof of Proposition~\ref{sto}}\label{subsec:proof-sto-1}
For simplicity, we will assume for the whole proof that $T= 1$ and we set, for all $m, n\geq 0$, $\<Psi>_{n,m}:=\<Psi>_m-\<Psi>_n$. Besides, along the statement of the proposition, we fix $\al$ satisfying~\eqref{assump-gene-al}.

\

\noindent
\textbf{Step 1:}
Let us show that for all $p\geq 1$, $1\leq n \leq m$, $0\leq s \leq t \leq 1$ and $\varepsilon,\ka >0$ small enough, one has
\begin{equation}\label{bou-psi-step-1}
 \int_{\mathbb{R}^d}\mathbb{E}\bigg[\Big|\mathcal{F}^{-1}\Big(\{1+|.|^2\}^{-\frac{\alpha}{2}}\mathcal{F}\big(\chi \big[\<Psi>_{n,m}(t,.)-\<Psi>_{n,m}(s,.)\big]\big)\Big)(x)\Big|^{2p}\bigg]\, dx \lesssim 2^{-4n \varepsilon p}|t-s|^{2 \ka p}\, ,
\end{equation}
where the proportional constant only depends on $p, \alpha$ and $\chi$.

\

First, we can observe that the random variable under consideration is Gaussian, and so, for every $p\geq 1$, one has
\begin{align}
&\mathbb{E}\bigg[\Big|\mathcal{F}^{-1}\Big(\{1+|.|^2\}^{-\frac{\alpha}{2}}\mathcal{F}\big(\chi \big[\<Psi>_{n,m}(t,.)-\<Psi>_{n,m}(s,.)\big]\big)\Big)(x)\Big|^{2p}\bigg]\nonumber\\
&\leq c_p\,  \mathbb{E}\bigg[\Big|\mathcal{F}^{-1}\Big(\{1+|.|^2\}^{-\frac{\alpha}{2}}\mathcal{F}\big(\chi \big[\<Psi>_{n,m}(t,.)-\<Psi>_{n,m}(s,.)\big]\big)\Big)(x)\Big|^{2}\bigg]^p\, ,\label{appl-hyper}
\end{align}
where the constant $c_p$ only depends on $p$.

\smallskip

Let us then expand this variable as 
\begin{align*}
&\mathcal{F}^{-1}\Big(\{1+|.|^2\}^{-\frac{\alpha}{2}}\mathcal{F}\big(\chi \big[\<Psi>_{n,m}(t,.)-\<Psi>_{n,m}(s,.)\big]\big)\Big)(x)\\
&=\frac{1}{(2\pi)^d}\int_{\mathbb{R}^d}d\lambda \, e^{\imath \langle x, \lambda \rangle} \{1+|\lambda|^2\}^{-\frac{\alpha}{2}}\mathcal{F}\big(\chi \big[\<Psi>_{n,m}(t,.)-\<Psi>_{n,m}(s,.)\big]\big)(\lambda)\\
&=\frac{1}{(2\pi)^d}\int_{\mathbb{R}^d}d\la \, \{1+|\lambda|^2\}^{-\frac{\alpha}{2}}e^{\imath \langle x, \lambda \rangle}\left(\int_{\mathbb{R}^d}d\be\, \widehat{\chi}(\la-\be) \cf\big(\big[\<Psi>_{n,m}(t,.)-\<Psi>_{n,m}(s,.)\big]\big)(\be)\right)\, ,
\end{align*}
so that
\begin{align}
&\mathbb{E}\bigg[\Big|\mathcal{F}^{-1}\Big(\{1+|.|^2\}^{-\frac{\alpha}{2}}\mathcal{F}\big(\chi \big[\<Psi>_{n,m}(t,.)-\<Psi>_{n,m}(s,.)\big]\big)\Big)(x)\Big|^{2}\bigg]\nonumber\\
&=\frac{1}{(2\pi)^{2d}}\iint_{(\R^d)^2} \frac{d\la d\lati}{\{1+|\la|^2\}^{\frac{\al}{2}} \{1+|\lati|^2\}^{\frac{\al}{2}}}e^{\imath \langle x,\la-\lati\rangle} \iint_{(\R^d)^2} d\be d\betati \, \widehat{\chi}(\la-\be) \overline{\widehat{\chi}(\lati-\betati)} \mathcal{Q}_{n,m;s,t}(\be,\betati) \, ,\label{bound-correc-q}
\end{align}
with
$$\mathcal{Q}_{n,m;s,t}(\be,\betati):=\mathbb{E}\Big[\cf\big(\big[\<Psi>_{n,m}(t,.)-\<Psi>_{n,m}(s,.)\big]\big)(\be) \overline{\cf\big(\big[\<Psi>_{n,m}(t,.)-\<Psi>_{n,m}(s,.)\big]\big)(\betati)}\Big] \, .$$
To expand the latter quantity, observe first that, using the covariance formula~\eqref{cova-luxo}, we get 
\begin{align*}
&\mathbb{E}\Big[\big\{\<Psi>_{n,m}(t,y)-\<Psi>_{n,m}(s,y)\big\}\overline{\big\{\<Psi>_{n,m}(t,\tilde{y})-\<Psi>_{n,m}(s,\tilde{y})\big\}}\Big]\nonumber\\
&= c\int_{\left\{(\xi,\eta)\in D_{n,m}\right\}}\frac{1}{|\xi|^{2H_0-1}}\prod_{i=1}^{d}\frac{1}{|\eta_i|^{2H_i-1}}|\gamma_{s,t}(\xi,|\eta|)|^2e^{\imath \langle \eta, y \rangle}e^{-\imath\langle \eta, \tilde{y} \rangle}d\xi d\eta \, ,
\end{align*}
where $D_{n,m}:=(B_{2m}^1 \times B_m^d)\setminus (B_{2n}^1 \times B_n^d)$, and hence
\begin{equation}\label{expr-q}
\mathcal{Q}_{n,m;s,t}(\beta,\betati)=c\,  \int_{\left\{(\xi,\eta)\in D_{n,m}\right\}}d\xi d\eta\frac{1}{|\xi|^{2H_0-1}}\prod_{i=1}^{d}\frac{1}{|\eta_i|^{2H_i-1}}|\gamma_{s,t}(\xi,|\eta|)|^2 \delta_{\beta=\eta} \delta_{\betati=\eta} \, .
\end{equation}
Combining~\eqref{appl-hyper}-\eqref{bound-correc-q}-\eqref{expr-q} and using a standard Fubini argument, we end up with the estimate
\begin{align*}
& \int_{\mathbb{R}^d}dx\, \mathbb{E}\bigg[\Big|\mathcal{F}^{-1}\Big(\{1+|.|^2\}^{-\frac{\alpha}{2}}\mathcal{F}\big(\chi \big[\<Psi>_{n,m}(t,.)-\<Psi>_{n,m}(s,.)\big]\big)\Big)(x)\Big|^{2p}\bigg]\\
&\lesssim \int_{\R^d} dx \, \bigg(\int_{\left\{(\xi,\eta)\in D_{n,m}\right\}}d\xi d\eta\frac{1}{|\xi|^{2H_0-1}}\prod_{i=1}^{d}\frac{1}{|\eta_i|^{2H_i-1}}|\gamma_{s,t}(\xi,|\eta|)|^2\\
&\hspace{3cm} \iint_{(\R^d)^2} \frac{d\la d\lati}{\{1+|\la|^2\}^{\frac{\al}{2}} \{1+|\lati|^2\}^{\frac{\al}{2}}}e^{\imath \langle x,\la-\lati\rangle}  \, \widehat{\chi}(\la-\eta) \overline{\widehat{\chi}(\lati-\eta)} \bigg)^p\\
&\lesssim  \left(\int_{(\xi,\eta)\in D_{n,m}}\frac{1}{|\xi|^{2H_0-1}}\prod_{i=1}^{d}\frac{1}{|\eta_i|^{2H_i-1}}\{1+|\eta|^2\}^{-\alpha}|\gamma_{s,t}(\xi,|\eta|)|^2\, d\xi d\eta\right)^p \, ,
\end{align*}
where we have used Lemma~\ref{lem:handle-chi-correc} to get the last inequality.

\smallskip

Now we can obviously write
\begin{align}
& \left(\int_{(\xi,\eta)\in D_{n,m}}\frac{1}{|\xi|^{2H_0-1}}\prod_{i=1}^{d}\frac{1}{|\eta_i|^{2H_i-1}}\{1+|\eta|^2\}^{-\alpha}|\gamma_{s,t}(\xi,|\eta|)|^2\, d\xi d\eta\right)^p \nonumber\\
&\lesssim  \left(\int_{2^{2n}\leq |\xi|\leq 2^{2m}}\int_{|\eta|\leq 2^m}\frac{1}{|\xi|^{2H_0-1}}\prod_{i=1}^{d}\frac{1}{|\eta_i|^{2H_i-1}}\{1+|\eta|^2\}^{-\alpha}|\gamma_{s,t}(\xi,|\eta|)|^2 \, d\xi d\eta\right)^p \nonumber\\
&\hspace{1cm}+ \left(\int_{|\xi|\leq 2^{2m}}\int_{2^n\leq |\eta|\leq 2^m}\frac{1}{|\xi|^{2H_0-1}}\prod_{i=1}^{d}\frac{1}{|\eta_i|^{2H_i-1}}\{1+|\eta|^2\}^{-\alpha}|\gamma_{s,t}(\xi,|\eta|)|^2\, d\xi d\eta\right)^p \nonumber\\
&=:(\mathbb{I}_{n,m}(s,t))^p+(\mathbb{II}_{n,m}(s,t))^p \, . \label{decompos-i-ii}
\end{align}
Let us focus on the estimation of $\mathbb{I}_{n,m}(s,t)$. To this end, we fix $0<\varepsilon<\min\big(H_0,\frac12\big)$, so that
\begin{equation}\label{element-bounding}
\mathbb{I}_{n,m}(s,t)\leq 2^{-4n\varepsilon}  \int_{\mathbb{R}}d\xi\int_{\R^d}d\eta \, \frac{1}{|\xi|^{2H_0-2\varepsilon-1}}\prod_{i=1}^{d}\frac{1}{|\eta_i|^{2H_i-1}}\{1+|\eta|^2\}^{-\alpha}|\gamma_{s,t}(\xi,|\eta|)|^2\, ,
\end{equation}
which, through an elementary hyperspherical change of variables, leads us to
\begin{equation}\label{chg-of-var}
\mathbb{I}_{n,m}(s,t)\lesssim 2^{-4n\varepsilon} \int_{0}^{\infty}dr \, \frac{\left\{1+r^2\right\}^{-\alpha}}{r^{2(H_1+\cdots+H_d)-2d+1}}\left(\int_{\mathbb{R}}d\xi\, \frac{|\gamma_{s,t}(\xi,r)|^2}{|\xi|^{2H_0-2\varepsilon-1}}\right) \, .
\end{equation}
We can now apply Corollary~\ref{tec} with $H:=H_0-\varepsilon$, which gives, for all $0<\kappa< \min(H_0-\varepsilon,\frac{1}{2}-\varepsilon)$,
\begin{align}
&\mathbb{I}_{n,m}(s,t)\nonumber\\
&\lesssim 2^{-4n\varepsilon} |t-s|^{2\kappa}\int_{0}^{\infty}dr \, \frac{\left\{1+r^2\right\}^{-\alpha}}{r^{2(H_1+\cdots+H_d)-2d+1}}\frac{1}{1+r^{4H_0-4\kappa-2-8\varepsilon}}\nonumber\\
&\lesssim 2^{-4n\varepsilon}|t-s|^{2\kappa}\left(\int_0^1 \frac{1}{r^{2(H_1+\cdots+H_d)-2d+1}}dr + \int_{1}^{\infty}\frac{1}{r^{2\alpha+2(2H_0+H_1+\cdots+H_d)-2d-1-4\kappa -8\varepsilon}}dr \right)\, .\label{boun-i-pr}
\end{align}
Due to our assumption~\eqref{assump-gene-al}, we can in fact pick $\varepsilon$ and $\ka$ sufficiently small so that
$$4\varepsilon + 2\kappa< \alpha-\bigg[d+1-\bigg(2H_0+\sum_{i=1}^{d}H_i\bigg)\bigg] \, ,$$
and for such a choice, the integrals involved in~\eqref{boun-i-pr} are obviously finite, implying that
$$\mathbb{I}_{n,m}(s,t)\lesssim 2^{-4n\varepsilon}|t-s|^{2\kappa}\, .$$
It is easy to see that the above estimates could also be used to bound $\mathbb{II}_{n,m}(s,t)$, leading to the very same estimate
$$\mathbb{II}_{n,m}(s,t)\lesssim 2^{-4n\varepsilon}|t-s|^{2\kappa}\, .$$
Going back to~\eqref{decompos-i-ii}, we deduce the desired bound~\eqref{bou-psi-step-1}.

\

\noindent
\textbf{Step 2:}
The estimate obtained in the previous step can naturally be rephrased as
\begin{equation}\label{recap-step-1}
\mathbb{E}\Big[\big\|\chi \<Psi>_{n,m}(t,.)-\chi\<Psi>_{n,m}(s,.)\big\|_{\mathcal{W}^{-\alpha,2p}}^{2p}\Big]\lesssim 2^{-4n \varepsilon p}|t-s|^{2 \ka p}\, ,
\end{equation}
for all $p\geq 1$, $1\leq n \leq m$, $0\leq s \leq t \leq 1$ and $\varepsilon,\ka >0$ small enough.

\smallskip

By choosing $p\geq 1$ large enough so that $2\ka p>1$, Kolmogorov continuity criterion allows us to assert that $\chi \<Psi>_{n,m} \in \mathcal{C}([0,T]; \mathcal{W}^{-\alpha,2p}(\mathbb{R}^d)) $ almost surely. In turn, this puts us in a position to use the classical Garsia-Rodemich-Rumsey estimate (see~\cite{GRR}) and deduce that almost surely, for all $p\geq 1$, $0<\ka_0<\ka$, $0\leq s \leq t \leq 1$, one has
$$ \|\chi \<Psi>_{n,m}(t,.)-\chi\<Psi>_{n,m}(s,.)\|_{\mathcal{W}^{-\alpha,2p}}^{2p} \lesssim |t-s|^{2\ka_0 p}\int_{[0,1]^2} \frac{\|\chi \<Psi>_{n,m}(u,.)-\chi\<Psi>_{n,m}(v,.)\|_{\mathcal{W}^{-\alpha,2p}}^{2p}}{|u-v|^{2\ka_0 p+2}}\, dudv\, ,$$
for some proportional constant that only depends on $\ka_0$ and $p$.

\smallskip

Picking $s=0$ and taking the supremum over $t\in [0,1]$, we derive
$$ \|\chi\<Psi>_{n,m}\|_{\mathcal{C}_T\mathcal{W}^{-\alpha,2p}}^{2p} \lesssim\int_{[0,1]^2}\frac{\|\chi \<Psi>_{n,m}(u,.)-\chi\<Psi>_{n,m}(v,.)\|_{\mathcal{W}^{-\alpha,2p}}^{2p}}{|u-v|^{2\ka_0 p+2}}dudv\, ,$$
and therefore, using~\eqref{recap-step-1} again, we obtain that
\begin{eqnarray*}
\mathbb{E}\Big[\|\chi\<Psi>_{n,m}\|_{\mathcal{C}_T\mathcal{W}^{-\alpha,2p}}^{2p}\Big]
&\lesssim& 2^{-4n \varepsilon p}\int_{[0,1]^2}\frac{dudv}{|u-v|^{-2(\ka-\ka_0) p+2}}\ \lesssim \ 2^{-4n \varepsilon p} \, ,
\end{eqnarray*}
for any $p\geq 1$ large enough so that $-2(\ka-\ka_0) p+2<1$.

\

We can conclude that for any $p\geq 1$ large enough,
\begin{equation}\label{bou-in-l-p}
\|\chi\<Psi>_{n,m}\|_{ L^{2p}(\Omega; \mathcal{C}_T\mathcal{W}^{-\alpha,2p})} \lesssim 2^{-2n \varepsilon }\, .
\end{equation}
In particular, $(\chi\<Psi>_{n})_{n\geq 1}$ is a Cauchy sequence in $L^{2p}(\Omega; \mathcal{C}([0,T]; \mathcal{W}^{-\alpha,2p}(\mathbb{R}^d)))$ (for any $p\geq 1$ large enough), which entails convergence in this space to a limit $\chi\<Psi>$. Going back to~\eqref{bou-in-l-p}, we also have 
$$
\|\chi\<Psi>-\chi\<Psi>_{n}\|_{ L^{2p}(\Omega; \mathcal{C}_T\mathcal{W}^{-\alpha,2p})}  \lesssim 2^{-2n \varepsilon }\, ,
$$
and from there, a standard Borell-Cantelli argument provides us with the desired almost sure convergence of $(\chi\<Psi>_{n})_{n\geq 1}$ to $\chi\<Psi>$ in $\mathcal{C}([0,T]; \mathcal{W}^{-\alpha,p}(\mathbb{R}^d))$, for every $2\leq p<\infty$. Finally, the convergence in $\mathcal{C}([0,T]; \mathcal{W}^{-\alpha,\infty}(\mathbb{R}^d))$ follows from the Sobolev embedding $\cw^{-\al+\frac{d}{p}+\eta,p}(\R^d) \subset \cw^{-\al,\infty}(\R^d)$, for any $\eta>0$.

\

\subsection{Proof of Proposition~\ref{sto1}}\label{subsec:cerise}
We will follow the same general scheme as in the proof of Proposition~\ref{sto}. Just as in Section~\ref{subsec:proof-sto-1}, let us assume that $T= 1$, and set, for all $m, n\geq 0$, $\<Psi2>_{n,m}:=\<Psi2>_m-\<Psi2>_n$.

\

\noindent
\textbf{Step 1:}
Our main objective here is to show that for all $p\geq 1$, $0\leq n \leq m$, $0\leq s \leq t \leq 1$, $\varepsilon,\ka>0$ small enough, and for every $\al$ satisfying
\begin{equation}\label{assump-al-bis}
d+1-\bigg(2H_0+\sum_{i=1}^{d}H_i\bigg)<\al <\frac14 \, ,
\end{equation}
one has
\begin{equation}\label{mom-estim-wick-sq}
 \int_{\mathbb{R}^d}\mathbb{E}\bigg[\Big|\mathcal{F}^{-1}\Big(\{1+|.|^2\}^{-\alpha}\mathcal{F}\big(\chi^2 \big[\<Psi2>_{n,m}(t,.)-\<Psi2>_{n,m}(s,.)\big]\big)\Big)(x)\Big|^{2p}\bigg]dx \lesssim 2^{-4n \varepsilon p}|t-s|^{\ka p}\, ,
\end{equation}
where the proportional constant only depends on $p, \alpha,$ and $\chi$. 

\smallskip

For the sake of conciseness, we shall only prove estimate~\eqref{mom-estim-wick-sq} for $n=0$, that is we will only focus on the estimate for the time-variation $\<Psi2>_{m}(t,.)-\<Psi2>_{m}(s,.)$, with $m\geq 1$. The extension of the result to all $m\geq n\geq 0$ could in fact be easily deduced from the combination of the subsequent estimates with the elementary bounding argument used in~\eqref{element-bounding}. 

\

A first fundamental observation is that the contractivity argument used in~\eqref{appl-hyper} can be extended to the present setting. Indeed, the random variable under consideration clearly belongs to the first two chaoses generated by $W$ (with representation~\eqref{defi:approx-b-n} of the noise in mind), and therefore, due to the hypercontractivity property holding in such a space (see e.g.~\cite{wick}), we can assert that
\begin{align*}
&\mathbb{E}\bigg[\Big|\mathcal{F}^{-1}\Big(\{1+|.|^2\}^{-\alpha}\mathcal{F}\big(\chi^2 \big[\<Psi2>_{m}(t,.)-\<Psi2>_{m}(s,.)\big]\big)\Big)(x)\Big|^{2p}\bigg]\nonumber\\
&\leq c_p\, \mathbb{E}\bigg[\Big|\mathcal{F}^{-1}\Big(\{1+|.|^2\}^{-\alpha}\mathcal{F}\big(\chi^2 \big[\<Psi2>_{m}(t,.)-\<Psi2>_{m}(s,.)\big]\big)\Big)(x)\Big|^{2}\bigg]^p \, , 
\end{align*}
where the constant $c_p$ only depends on $p$.

\smallskip

Let us then write
\begin{align*}
&\mathcal{F}^{-1}\Big(\{1+|.|^2\}^{-\alpha}\mathcal{F}\big(\chi^2 \big[\<Psi2>_{m}(t,.)-\<Psi2>_{m}(s,.)\big]\big)\Big)(x)\\
&=\frac{1}{(2\pi)^d}\int_{\mathbb{R}^d}d\la \, \{1+|\lambda|^2\}^{-\alpha}e^{\imath \langle x, \lambda \rangle}\left(\int_{\mathbb{R}^d}d\be\, \widehat{\chi^2}(\la-\be) \cf\big(\big[\<Psi2>_{m}(t,.)-\<Psi2>_{m}(s,.)\big]\big)(\be)\right)\, ,
\end{align*}
which immediately yields
\begin{align}
&\mathbb{E}\bigg[\Big|\mathcal{F}^{-1}\Big(\{1+|.|^2\}^{-\al}\mathcal{F}\big(\chi^2 \big[\<Psi2>_{m}(t,.)-\<Psi2>_{m}(s,.)\big]\big)\Big)(x)\Big|^{2}\bigg]\nonumber\\
&=\frac{1}{(2\pi)^{2d}}\iint_{(\R^d)^2} \frac{d\la d\lati}{\{1+|\la|^2\}^\al \{1+|\lati|^2\}^\al}e^{\imath \langle x,\la-\lati\rangle} \iint_{(\R^d)^2} d\be d\betati \, \widehat{\chi^2}(\la-\be) \overline{\widehat{\chi^2}(\lati-\betati)} \mathcal{Q}^{(2)}_{m;s,t}(\be,\betati) \, ,\label{correcti}
\end{align}
with
$$\mathcal{Q}^{(2)}_{m;s,t}(\be,\betati):=\mathbb{E}\Big[\cf\big(\big[\<Psi2>_{m}(t,.)-\<Psi2>_{m}(s,.)\big]\big)(\be) \overline{\cf\big(\big[\<Psi2>_{m}(t,.)-\<Psi2>_{m}(s,.)\big]\big)(\betati)}\Big] \, .$$

\

In order to expand $\mathcal{Q}^{(2)}_{m;s,t}(\be,\betati)$, let us first recall that the expansion of the (\enquote{renormalized}) quantities
$$\mathbb{E}\Big[\<Psi2>_{m}(t,y)\overline{\<Psi2>_{m}(s,\tilde{y})}\Big]$$
is governed by the following standard application of  Wick formula (see e.g.~\cite{wick}) :

\begin{lemma}\label{wick}
For all $m, n \geq 1, s, t \geq 0$ and $y, \tilde{y} \in \mathbb{R}^d,$ it holds that
$$\mathbb{E} \Big[\<Psi2>_{m}(t,y)\overline{\<Psi2>_{n}(s,\tilde{y})}\Big]=\Big|\mathbb{E}\Big[\<Psi>_{m}(t,y)\overline{\<Psi>_{n}(s,\tilde{y})}\Big]\Big|^2+\Big|\mathbb{E}\Big[\<Psi>_{m}(t,y)\<Psi>_{n}(s,\tilde{y})\Big]\Big|^2 \, .$$
\end{lemma}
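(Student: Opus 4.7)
The plan is to reduce the identity to a direct moment computation for a four-dimensional real centered Gaussian vector, exploiting the fact that the processes $\<Psi>_m$ and $\<Psi>_n$ constructed in Definition~\ref{defi:luxo-n} live in the first (complex) Wiener chaos generated by $W$, and hence their real and imaginary parts form a jointly Gaussian family. Writing $X:=\<Psi>_m(t,y)$ and $Y:=\<Psi>_n(s,\tilde y)$, and recalling that $\sigma_m(t)=\mathbb{E}[|X|^2]$, $\sigma_n(s)=\mathbb{E}[|Y|^2]$, the left-hand side of the claimed identity becomes the covariance
\begin{equation*}
\mathrm{Cov}\bigl(|X|^2,|Y|^2\bigr)=\mathbb{E}\bigl[|X|^2|Y|^2\bigr]-\mathbb{E}\bigl[|X|^2\bigr]\,\mathbb{E}\bigl[|Y|^2\bigr],
\end{equation*}
where the complex conjugation on the second factor plays no role since $|Y|^2\in\mathbb{R}$.

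Next I would decompose $X=a+\imath b$ and $Y=c+\imath d$ with $(a,b,c,d)$ a centered real Gaussian vector, and expand
\begin{equation*}
|X|^2|Y|^2=(a^2+b^2)(c^2+d^2).
\end{equation*}
Applying the (real) Isserlis/Wick formula to each of the four terms $\mathbb{E}[a^2c^2]$, $\mathbb{E}[a^2d^2]$, $\mathbb{E}[b^2c^2]$, $\mathbb{E}[b^2d^2]$ (each of which gives a diagonal contribution cancelling with $\mathbb{E}[|X|^2]\mathbb{E}[|Y|^2]$, plus twice the square of a mixed covariance), I obtain
\begin{equation*}
\mathrm{Cov}\bigl(|X|^2,|Y|^2\bigr)=2\bigl(\mathbb{E}[ac]^2+\mathbb{E}[ad]^2+\mathbb{E}[bc]^2+\mathbb{E}[bd]^2\bigr).
\end{equation*}

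Finally, I would identify the right-hand side of the lemma with this same quantity by expanding
\begin{equation*}
\mathbb{E}[X\bar Y]=(\mathbb{E}[ac]+\mathbb{E}[bd])+\imath(\mathbb{E}[bc]-\mathbb{E}[ad]),\qquad \mathbb{E}[XY]=(\mathbb{E}[ac]-\mathbb{E}[bd])+\imath(\mathbb{E}[bc]+\mathbb{E}[ad]),
\end{equation*}
so that $|\mathbb{E}[X\bar Y]|^2+|\mathbb{E}[XY]|^2$ produces exactly $2(\mathbb{E}[ac]^2+\mathbb{E}[ad]^2+\mathbb{E}[bc]^2+\mathbb{E}[bd]^2)$ after the cross terms cancel in pairs. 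This yields the claimed equality. There is no real obstacle here; the only mildly delicate point is the combinatorial bookkeeping when passing from the complex pair $(X,Y)$ to the underlying real Gaussian vector, and the observation that the cross terms $\mathbb{E}[ac]\mathbb{E}[bd]$ and $\mathbb{E}[ad]\mathbb{E}[bc]$ appearing with opposite signs in the two $|{\cdot}|^2$ on the right-hand side cancel exactly.
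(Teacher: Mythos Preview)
Your argument is correct and is precisely the elementary verification of the Wick/Isserlis identity that the paper invokes without proof (the paper simply labels this a ``standard application of Wick formula'' and cites \cite{wick}). Your bookkeeping with the real and imaginary parts is accurate, and the cancellation of the cross terms $\mathbb{E}[ac]\mathbb{E}[bd]$ and $\mathbb{E}[ad]\mathbb{E}[bc]$ between $|\mathbb{E}[X\bar Y]|^2$ and $|\mathbb{E}[XY]|^2$ is exactly the point.
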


\

Using Lemma~\ref{wick} and the shortcut notation $\<Psi>_{m}(u,v;z):=\<Psi>_{m}(v,z)-\<Psi>_{m}(u,z)$, we easily verify that
\small
\begin{align*}
&\mathbb{E}\Big[[\<Psi2>_{m}(t,y)-\<Psi2>_{m}(s,y)]\overline{[\<Psi2>_{m}(t,\tilde{y})-\<Psi2>_{m}(s,\tilde{y})]}\Big]\\
&=\bigg[\mathbb{E}\Big[\<Psi>_{m}(s,t;y)\overline{\<Psi>_{m}(t,\tilde{y})}\Big]\cdot \mathbb{E}\Big[\overline{\<Psi>_{m}(t,y)}\<Psi>_{m}(t,\tilde{y})\Big]+\mathbb{E}\Big[\<Psi>_{m}(s,y)\overline{\<Psi>_{m}(t,\tilde{y})}\Big]\cdot \mathbb{E}\Big[\overline{\<Psi>_{m}(s,t;y)}\<Psi>_{m}(t,\tilde{y})\Big]\\
&\hspace{0.5cm}+ \mathbb{E}\Big[\<Psi>_{m}(t,s;y)\overline{\<Psi>_{m}(s,\tilde{y})}\Big]\cdot \mathbb{E}\Big[\overline{\<Psi>_{m}(t,y)}\<Psi>_{m}(s,\tilde{y})\Big]+\mathbb{E}\Big[\<Psi>_{m}(s,y)\overline{\<Psi>_{m}(s,\tilde{y})}\Big]\cdot \mathbb{E}\Big[\overline{\<Psi>_{m}(t,s;y)}\<Psi>_{m}(s,\tilde{y})\Big]\bigg]\\
&+ \bigg[\mathbb{E}\Big[\<Psi>_{m}(s,t;y)\<Psi>_{m}(t,\tilde{y})\Big]\cdot \mathbb{E}\Big[\overline{\<Psi>_{m}(t,y)}\overline{\<Psi>_{m}(t,\tilde{y})}\Big]+\mathbb{E}\Big[\<Psi>_{m}(s,y)\<Psi>_{m}(t,\tilde{y})\Big]\cdot \mathbb{E}\Big[\overline{\<Psi>_{m}(s,t;y)}\overline{\<Psi>_{m}(t,\tilde{y})}\Big]\\
&\hspace{0.5cm}+ \mathbb{E}\Big[\<Psi>_{m}(t,s;y)\<Psi>_{m}(s,\tilde{y})\Big]\cdot \mathbb{E}\Big[\overline{\<Psi>_{m}(t,y)}\overline{\<Psi>_{m}(s,\tilde{y})}\Big]+\mathbb{E}\Big[\<Psi>_{m}(s,y)\<Psi>_{m}(s,\tilde{y})\Big]\cdot \mathbb{E}\Big[\overline{\<Psi>_{m}(t,s;y)}\overline{\<Psi>_{m}(s,\tilde{y})}\Big]\bigg]\, ,
\end{align*}
\normalsize
which, combined with the covariance formulas~\eqref{cova-luxo}-\eqref{cova-luxo-2}, allows us to expand~\eqref{correcti} as 
\small
\begin{align*}
&\mathbb{E}\bigg[\Big|\mathcal{F}^{-1}\Big(\{1+|.|^2\}^{-\al}\mathcal{F}\big(\chi^2 \big[\<Psi2>_{m}(t,.)-\<Psi2>_{m}(s,.)\big]\big)\Big)(x)\Big|^{2}\bigg]\\
&=c\int_{(\xi,\eta)\in D_{m}}d\xi d\eta \int_{(\xiti,\etati)\in D_{m}}d\tilde{\xi} d\tilde{\eta} \, \frac{1}{|\xi|^{2H_0-1}}\prod_{i=1}^{d}\frac{1}{|\eta_i|^{2H_i-1}}\frac{1}{|\tilde{\xi}|^{2H_0-1}}\prod_{i=1}^{d}\frac{1}{|\tilde{\eta_i}|^{2H_i-1}} \\
&\hspace{3cm} \bigg(\sum_{\ell=1}^8\gga^\ell_{m;s,t}(\xi,|\eta|;\xiti,|\etati|)\bigg) \\
&\hspace{1cm}\iint_{(\R^d)^2} \frac{d\la d\lati}{\{1+|\la|^2\}^\al \{1+|\lati|^2\}^\al}e^{\imath \langle x,\la-\lati\rangle} \widehat{\chi^2}(\la-(\eta-\etati)) \overline{\widehat{\chi^2}(\lati-(\eta-\etati))} \, ,
\end{align*}
\normalsize
with $\gga^\ell_{m;s,t}=\gga^\ell_{m;s,t}(\xi,|\eta|;\xiti,|\etati|)$ given by
\small
\begin{align*}
\gga^1_{m;s,t}:=\gamma_{s,t}(\xi,|\eta|)\, \overline{\gamma_{t}(\xi,|\eta|)}\, |\gamma_t(\xiti,|\etati|)|^2\, ,& \quad \gga^2_{m;s,t}:=\gamma_s(\xi,|\eta|)\, \overline{\gamma_{t}(\xi,|\eta|)}\, \overline{\gamma_{s,t}(\xiti,|\etati|)}\, \gamma_t(\xiti,|\etati|)\, ,\\
\gga^3_{m;s,t}:=\gamma_{t,s}(\xi,|\eta|)\, \overline{\gamma_{s}(\xi,|\eta|)}\, \overline{\gamma_t(\xiti,|\etati|)} \, \gamma_s(\xiti,|\etati|)\, , &\quad \gga^4_{m;s,t}:=|\gamma_s(\xi,|\eta|)|^2\, \, \overline{\gamma_{t,s}(\xiti,|\etati|)}\, \gamma_s(\xiti,|\etati|)  \, ,\\
\gga^5_{m;s,t}:=\gamma_{s,t}(\xi,|\eta|)\, \gamma_{t}(-\xi,|\eta|)\, \overline{\gamma_t(\xiti,|\etati|)}\overline{\ga_t(-\xiti,|\etati|)}\, ,& \quad \gga^6_{m;s,t}:=\gamma_{s}(\xi,|\eta|)\, \gamma_{t}(-\xi,|\eta|)\, \overline{\gamma_{s,t}(\xiti,|\etati|)}\overline{\ga_t(-\xiti,|\etati|)}\, ,\\
\gga^7_{m;s,t}:=\gamma_{t,s}(\xi,|\eta|)\, \gamma_{s}(-\xi,|\eta|)\, \overline{\gamma_t(\xiti,|\etati|)}\overline{\ga_s(-\xiti,|\etati|)}\, , &\quad \gga^8_{m;s,t}:=\gamma_{s}(\xi,|\eta|)\, \gamma_{s}(-\xi,|\eta|)\, \overline{\gamma_{t,s}(\xiti,|\etati|)}\overline{\ga_s(-\xiti,|\etati|)}  \, .
\end{align*}
\normalsize
At this point, we can rely on the technical Lemma~\ref{lem:handle-chi-correc} to assert that
\begin{equation}\label{bou-sum-cj-i-p}
\int_{\R^d} dx \, \mathbb{E}\bigg[\Big|\mathcal{F}^{-1}\Big(\{1+|.|^2\}^{-\al}\mathcal{F}\big(\chi^2 \big[\<Psi2>_{m}(t,.)-\<Psi2>_{m}(s,.)\big]\big)\Big)(x)\Big|^{2}\bigg]^p\lesssim \bigg( \sum_{\ell=1}^8 \cj^\ell_{m;s,t}\bigg)^p
\end{equation}
where
\begin{align}
&\cj^\ell_{m;s,t}:=\int_{(\xi,\eta)\in D_{m}}d\xi d\eta \int_{(\xiti,\etati)\in D_{m}}d\tilde{\xi} d\tilde{\eta} \, \frac{1}{|\xi|^{2H_0-1}}\prod_{i=1}^{d}\frac{1}{|\eta_i|^{2H_i-1}}\frac{1}{|\tilde{\xi}|^{2H_0-1}}\prod_{i=1}^{d}\frac{1}{|\tilde{\eta_i}|^{2H_i-1}}\nonumber\\
&\hspace{7cm} \big|\gga^\ell_{m;s,t}(\xi,|\eta|;\xiti,|\etati|)\big|\big\{1+|\eta-\tilde{\eta}|^2\big\}^{-2\alpha} \, .\label{defi-cj-i}
\end{align}
Let us focus on the estimate for $\cj^1_{m;s,t}$. Using the trivial bound $|\eta-\etati|\geq | |\eta|-|\etati| |$, one has
\begin{align}
&\cj^1_{m;s,t}= \int_{(\xi,\eta)\in D_{m}}d\xi d\eta \int_{(\xiti,\etati)\in D_{m}}d\tilde{\xi} d\tilde{\eta} \, \big\{1+|\eta-\tilde{\eta}|^2\big\}^{-2\alpha}  \nonumber\\
&\hspace{0.5cm}\bigg(\frac{1}{|\xi|^{2H_0-1}}\prod_{i=1}^{d}\frac{1}{|\eta_i|^{2H_i-1}}|\gamma_t(\xi,|\eta|)||\gamma_{s,t}(\xi,|\eta|)|\bigg)\cdot \bigg(\frac{1}{|\tilde{\xi}|^{2H_0-1}}\prod_{i=1}^{d}\frac{1}{|\tilde{\eta_i}|^{2H_i-1}}|\gamma_t(\tilde{\xi},|\tilde{\eta}|)|^2\bigg)  \nonumber\\
&\lesssim  \int_{(\mathbb{R}\times\mathbb{R}^d)^2}d\xi d\eta d\tilde{\xi} d\tilde{\eta} \, \big\{1+||\eta|-|\tilde{\eta}||^2\big\}^{-2\alpha}  \nonumber\\
&\hspace{0.5cm}\bigg(\frac{1}{|\xi|^{2H_0-1}}\prod_{i=1}^{d}\frac{1}{|\eta_i|^{2H_i-1}}|\gamma_t(\xi,|\eta|)||\gamma_{s,t}(\xi,|\eta|)|\bigg)\cdot\bigg(\frac{1}{|\tilde{\xi}|^{2H_0-1}}\prod_{i=1}^{d}\frac{1}{|\tilde{\eta_i}|^{2H_i-1}}|\gamma_t(\tilde{\xi},|\tilde{\eta}|)|^2\bigg) \, . \label{boun-cj-1-m}
\end{align}

\

Now, let us decompose the latter integration domain into $(\R\times \mathbb{R}^d )^2:=D_1\cup D_2$, where
$$D_1:=\Big\{(\xi,\eta,\tilde{\xi},\tilde{\eta}):\,  0\leq |\tilde{\eta}|\leq \frac{|\eta|}{2} \ \text{or} \ |\tilde{\eta}|\geq \frac{3|\eta|}{2}\Big\}\ \text{and} \ D_2:=\Big\{(\xi,\eta,\tilde{\xi},\tilde{\eta}):\,  \frac{|\eta|}{2}<|\tilde{\eta}|<\frac{3|\eta|}{2}\Big\}\, .$$
For the integral over $D_1$, we can rely on the inequality $||\eta|-|\tilde{\eta}||\geq\max(\frac{|\eta|}{2},\frac{|\tilde{\eta}|}{3})$ (valid for all $(\xi,\eta,\tilde{\xi},\tilde{\eta}) \in D_1$) to write
\begin{align*}
\ca_1&:=\int_{D_1}\frac{d\xi d\eta d\tilde{\xi} d\tilde{\eta}}{\{1+||\eta|-|\tilde{\eta}||^2\}^{2\alpha}}\bigg(\frac{1}{|\xi|^{2H_0-1}}\prod_{i=1}^{d}\frac{1}{|\eta_i|^{2H_i-1}}|\gamma_t(\xi,|\eta|)||\gamma_{s,t}(\xi,|\eta|)|\bigg)\cdot\\
&\hspace{7cm}\bigg(\frac{1}{|\tilde{\xi}|^{2H_0-1}}\prod_{i=1}^{d}\frac{1}{|\tilde{\eta_i}|^{2H_i-1}}|\gamma_t(\tilde{\xi},|\tilde{\eta}|)|^2\bigg) \nonumber \\
& \lesssim \Bigg(\int_{\mathbb{R}\times\mathbb{R}^d}\frac{d\xi d\eta }{\{1+|\eta|^2\}^{\alpha}}\frac{1}{|\xi|^{2H_0-1}}\prod_{i=1}^{d}\frac{1}{|\eta_i|^{2H_i-1}}|\gamma_t(\xi,|\eta|)||\gamma_{s,t}(\xi,|\eta|)|  \Bigg)\cdot\nonumber \\
&\hspace{5cm} \Bigg(\int_{\mathbb{R}\times\mathbb{R}^d}\frac{d\tilde{\xi} d\tilde{\eta}}{\{1+|\tilde{\eta}|^2\}^{\alpha}}\frac{1}{|\tilde{\xi}|^{2H_0-1}}\prod_{i=1}^{d}\frac{1}{|\tilde{\eta_i}|^{2H_i-1}}|\gamma_t(\tilde{\xi},|\tilde{\eta}|)|
^2\Bigg)\nonumber \\
&\lesssim \Bigg(\int_{\mathbb{R}\times\mathbb{R}^d}\frac{d\xi d\eta }{\{1+|\eta|^2\}^{\alpha}}\frac{1}{|\xi|^{2H_0-1}}\prod_{i=1}^{d}\frac{1}{|\eta_i|^{2H_i-1}}|\gamma_t(\xi,|\eta|)|^2  \Bigg)^{\frac{1}{2}}\cdot\nonumber \\ 
&\hspace{2cm} \Bigg(\int_{\mathbb{R}\times\mathbb{R}^d}\frac{d\xi d\eta }{\{1+|\eta|^2\}^{\alpha}}\frac{1}{|\xi|^{2H_0-1}}\prod_{i=1}^{d}\frac{1}{|\eta_i|^{2H_i-1}}|\gamma_{s,t}(\xi,|\eta|)|^2  \Bigg)^{\frac{1}{2}}\cdot\nonumber \\ 
&\hspace{4cm}\Bigg(\int_{\mathbb{R}\times\mathbb{R}^d}\frac{d\tilde{\xi} d\tilde{\eta}}{\{1+|\tilde{\eta}|^2\}^{\alpha}}\frac{1}{|\tilde{\xi}|^{2H_0-1}}\prod_{i=1}^{d}\frac{1}{|\tilde{\eta_i}|^{2H_i-1}}|\gamma_t(\tilde{\xi},|\tilde{\eta}|)|^2\Bigg)\, ,
\end{align*}
where we have merely used Cauchy-Schwarz inequality to derive the last estimate. 

\smallskip

Observe that we are here dealing with the exact same integral pattern as in the proof of Proposition~\ref{sto} (see in particular~\eqref{element-bounding}) and therefore we can mimic the arguments in~\eqref{chg-of-var}-\eqref{boun-i-pr} to deduce the desired estimate, namely: for any $\ka >0$ small enough,
$$\ca_1\lesssim |t-s|^{\ka} \, .$$

\smallskip

Let us turn to the integral over $D_2$ and lean on a hyperspherical change of variable to write
\begin{align*}
&\int_{\frac{|\eta|}{2}<|\tilde{\eta}|<\frac{3|\eta|}{2}}\frac{d\tilde{\eta}}{\{1+||\eta|-|\tilde{\eta}||^2\}^{2\alpha}}|\gamma_t(\tilde{\xi},|\tilde{\eta}|)|^2 \prod\limits_{i=1}^{d}\frac{1}{|\tilde{\eta_i}|^{2H_i-1}} \nonumber \\
&=|\eta|^{-2(H_1+\cdots+H_d)+2d}\int_{\frac{1}{2}<|\tilde{\eta}|<\frac{3}{2}}\frac{d\tilde{\eta}}{\{1+|\eta|^2(1-|\tilde{\eta}|)^2\}^{2\alpha}}|\gamma_t(\tilde{\xi},|\eta||\tilde{\eta}|)|^2 \prod\limits_{i=1}^{d}\frac{1}{|\tilde{\eta_i}|^{2H_i-1}} \nonumber \\
&\lesssim |\eta|^{-2(H_1+\cdots+H_d)+2d}\int_{\frac{1}{2}<r<\frac{3}{2}}\frac{dr}{\{1+|\eta|^2(1-r)^2\}^{2\alpha}}|\gamma_t(\tilde{\xi},|\eta|r)|^2\, .
\end{align*}
As a consequence,
\begin{align*}
\ca_2&:=\int_{D_2}\frac{d\xi d\eta d\tilde{\xi} d\tilde{\eta}}{\{1+||\eta|-|\tilde{\eta}||^2\}^{2\alpha}}\bigg(\frac{1}{|\xi|^{2H_0-1}}\prod_{i=1}^{d}\frac{1}{|\eta_i|^{2H_i-1}}|\gamma_t(\xi,|\eta|)||\gamma_{s,t}(\xi,|\eta|)|\bigg)\cdot\\
&\hspace{7cm}\bigg(\frac{1}{|\tilde{\xi}|^{2H_0-1}}\prod_{i=1}^{d}\frac{1}{|\tilde{\eta_i}|^{2H_i-1}}|\gamma_t(\tilde{\xi},|\tilde{\eta}|)|^2\bigg) \nonumber \\
&\lesssim \int_{\mathbb{R}^d}\frac{d\eta}{|\eta|^{2(H_1+\cdots+H_d)-2d}}\prod_{i=1}^{d}\frac{1}{|\eta_i|^{2H_i-1}}\int_{\frac{1}{2}<r<\frac{3}{2}}\frac{dr}{\{1+|\eta|^2(1-r)^2\}^{2\alpha}}\cdot \nonumber \\
&\hspace{5cm}\bigg(\int_{\mathbb{R}}d\xi\, \frac{|\gamma_t(\xi,|\eta|)||\gamma_{s,t}(\xi,|\eta|)|}{|\xi|^{2H_0-1}}\bigg)\cdot \bigg(\int_{\mathbb{R}}d\tilde{\xi}\, \frac{|\gamma_t(\tilde{\xi},|\eta|r)|^2}{|\tilde{\xi}|^{2H_0-1}}\bigg) \nonumber \\
&\lesssim \int_{\mathbb{R}^d}\frac{d\eta}{|\eta|^{2(H_1+\cdots+H_d)-2d}}\prod\limits_{i=1}^{d}\frac{1}{|\eta_i|^{2H_i-1}}\int_{\frac{1}{2}<r<\frac{3}{2}}\frac{dr}{\{1+|\eta|^2(1-r)^2\}^{2\alpha}} \cdot\nonumber \\
&\hspace{2cm}\bigg(\int_{\mathbb{R}}d\xi\,  \frac{|\gamma_t(\xi,|\eta|)|^2}{|\xi|^{2H_0-1}}\bigg)^{\frac{1}{2}}\cdot \bigg(\int_{\mathbb{R}}d\xi\,  \frac{|\gamma_{s,t}(\xi,|\eta|)|^2}{|\xi|^{2H_0-1}}\bigg)^{\frac{1}{2}}\cdot \bigg(\int_{\mathbb{R}}d\tilde{\xi}\, \frac{|\gamma_t(\tilde{\xi},|\eta|r)|^2}{|\tilde{\xi}|^{2H_0-1}}\bigg)\, . 
\end{align*}
Using again a hyperspherical change of variable (with respect to $\eta$), we obtain
\begin{align*}
\ca_2&\lesssim \int_{0}^{\infty}\frac{d\rho}{\rho^{4(H_1+\cdots+H_d)-4d+1}}\int_{\frac{1}{2}<r<\frac{3}{2}}\frac{dr}{\{1+\rho^2(1-r)^2\}^{2\alpha}} \nonumber \\
&\hspace{2cm}\bigg(\int_{\mathbb{R}}d\xi\, \frac{|\gamma_t(\xi,\rho)|^2}{|\xi|^{2H_0-1}}\bigg)^{\frac{1}{2}}\cdot \bigg(\int_{\mathbb{R}}d\xi\,  \frac{|\gamma_{s,t}(\xi,\rho)|^2}{|\xi|^{2H_0-1}}\bigg)^{\frac{1}{2}}\cdot \bigg(\int_{\mathbb{R}}d\tilde{\xi}\, \frac{|\gamma_t(\tilde{\xi},\rho r)|^2}{|\tilde{\xi}|^{2H_0-1}}\bigg)\, , \nonumber 
\end{align*}
and we can now use Corollary~\ref{tec} to assert that 
\begin{align*}
\ca_2&\lesssim |t-s|^{\kappa}\Bigg[\int_{0}^{1}\frac{d\rho}{\rho^{4(H_1+\cdots+H_d)-4d+1}}\\
&\hspace{2cm}+\int_{1}^{\infty}\frac{d\rho}{\rho^{4(2H_0+H_1+\cdots+H_d)-4d-3-8\varepsilon-8\kappa}}\int_{\frac{1}{2}<r<\frac{3}{2}}\frac{dr}{\{1+\rho^2(1-r)^2\}^{2\alpha}}\Bigg]
\end{align*}
for all $0<\varepsilon<\frac{1}{2}$ and $0<\kappa<\min(H_0,\frac{1}{2}-\varepsilon)$. 

\smallskip

At this point, observe that
\begin{align}
&\int_{1}^{\infty}\frac{d\rho}{\rho^{4(2H_0+H_1+\cdots+H_d)-4d-3-8\varepsilon-8\kappa}}\int_{\frac{1}{2}<r<\frac{3}{2}}\frac{dr}{\{1+\rho^2(1-r)^2\}^{2\alpha}} \nonumber \\
&\leq \int_{1}^{\infty}\frac{d\rho}{\rho^{4\alpha+4(2H_0+H_1+\cdots+H_d)-4d-3-8\varepsilon-8\kappa}}\int_{\frac{1}{2}<r<\frac{3}{2}}\frac{dr}{(1-r)^{4\alpha}} \, .\label{cond-al-imp}
\end{align}
Thanks to our assumption~\eqref{assump-al-bis} on $\al$, we know on the one hand that $4\al <1$, and on the other hand we can pick $\varepsilon, \ka >0$ such that 
$$4\alpha+4(2H_0+H_1+\cdots+H_d)-4d-3-8\varepsilon-8\kappa>1\, . $$
For such a choice of $\varepsilon,\ka$, the integrals in the right-hand side of~\eqref{cond-al-imp} are clearly finite, and therefore we have shown that
$$\ca_2\lesssim |t-s|^{\kappa} \, .$$

\smallskip

Going back to~\eqref{boun-cj-1-m}, we have thus proved that, uniformly over $m$,
$$\cj^1_{m;s,t} \lesssim |t-s|^{\kappa} \, .$$ 
It is now easy to realize that the other seven integrals $\{\cj^\ell_{m;s,t},\, \ell=2,\ldots,8\}$ (as defined in~\eqref{defi-cj-i}) could be handled with the very same arguments (yielding the very same final bound). 

\smallskip

Injecting the above estimates into~\eqref{bou-sum-cj-i-p} provides us with the desired bound~\eqref{mom-estim-wick-sq}.

\

\noindent
\textbf{Step 2: Conclusion.} Let $\al$ satisfying the condition in the statement of Proposition~\ref{sto1}, that is $\alpha > d+1-\big(2H_0+\sum_{i=1}^{d}H_i\big)$.  

\smallskip

If in addition one has $\al < \frac14$, then condition~\eqref{assump-al-bis} is satisfied, and so the moment estimate~\eqref{mom-estim-wick-sq} holds true. Starting from this estimate, we can use the same arguments as in Step 2 of Section~\ref{subsec:proof-sto-1} to obtain that $(\chi^2\<Psi2>_{n})_{n\geq 1}$ converges almost surely in the space $\mathcal{C}([0,T]; \mathcal{W}^{-2\alpha,p}(\mathbb{R}^d))$, for every $2\leq p <\infty$.

\smallskip

If $\al \geq \frac14$, observe that due to assumption~\eqref{cond-hurst-psi-2}, we can pick $\al'$ satisfying $\al'<\al$ and $d+1-\big(2H_0+\sum_{i=1}^{d}H_i\big) <\al'<\frac14$ (that is, $\al'$ satisfies~\eqref{assump-al-bis}). By repeating the above arguments, we deduce that the sequence $(\chi^2\<Psi2>_{n})_{n\geq 1}$ converges almost surely in $\mathcal{C}([0,T]; \mathcal{W}^{-2\alpha',p}(\mathbb{R}^d))$, and therefore it converges almost surely in $\mathcal{C}([0,T]; \mathcal{W}^{-2\alpha,p}(\mathbb{R}^d))$ as well, for $2\leq p<\infty$.

\smallskip

Finally, the (a.s.) convergence in $\mathcal{C}([0,T]; \mathcal{W}^{-2\alpha,\infty}(\mathbb{R}^d))$ can be easily derived from the Sobolev embedding $\cw^{-2\al+\frac{d}{p}+\eta,p}(\R^d)\subset \cw^{-2\al,\infty}(\R^d)$, for any $\eta>0$, which completes the proof of Proposition~\ref{sto1}.

\

\subsection{Proof of Proposition~\ref{prop:renorm-cstt}}\label{subsec:renorm-cstt}
Fix $d\geq 1$ and $(H_0,\dots,H_d) \in (0,1)^{d+1}$ such that 
$$d+\frac{3}{4}<2H_0+\sum_{i=1}^{d}H_i\leq d+1\, .$$ 
Using~\eqref{cova-luxo}, the quantity under consideration can be written as
\begin{eqnarray*}
\sigma_n(t,x)\ =\ \mathbb{E}\big[|\<Psi>_n(t,x)|^2\big]&=&c_H^2\int_{|\xi|\leq 4^n} \frac{d\xi}{|\xi|^{2H_0-1}}\int_{|\eta|\leq 2^n}\prod\limits_{i=1}^{d}\frac{d\eta_i}{|\eta_i|^{2H_i-1}}|\gamma_t(\xi,|\eta|)|^2 \\
&=&C_H\int_{0}^{2^n} \frac{dr}{r^{2(H_1+\cdots+H_d)-2d+1}}\int_{|\xi|\leq 4^n}\frac{d\xi}{|\xi|^{2H_0-1}}|\gamma_t(\xi,r)|^2 \, ,
\end{eqnarray*}
for some constant $C_H$, and where the last identity is derived from a hyperspherical change of variables. The above formula shows in particular that $\sigma_n$ does not depend on $x$, as stated in Proposition \ref{prop:renorm-cstt}. Regarding the desired estimate~\eqref{estim-sigma-n}, it is now a consequence of the following technical result (applied with $\alpha:=2H_0 \in (0,2)$ and $\kappa:=d+1-[2H_0+\sum_{i=1}^{d}H_i] \geq 0$):

\begin{proposition}
For all $\alpha \in (0,2)$ and $\kappa \geq 0$ verifying $\alpha+\kappa>1$, one has
$$\int_{0}^{2^n}\frac{dr}{r^{-2\alpha-2\kappa+3}}\int_{|\xi|\leq 4^n}\frac{d\xi}{|\xi|^{\alpha-1}}|\gamma_t(\xi,r)|^2\underset{n \rightarrow \infty}\sim  \left\{
    \begin{array}{ll}
        \frac{\pi}{\kappa}4^{n \kappa}t & \mbox{if}\hspace{0,3cm} \kappa>0 \,   \\
        \pi \ln{(4)}\cdot n t & \mbox{if}\hspace{0,3cm} \kappa=0\, 
    \end{array}
\right. .$$
\end{proposition}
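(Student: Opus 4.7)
The starting point is the explicit identity
\[
\gamma_{t}(\xi,r)\,=\,\frac{\imath\,(e^{\imath \xi t}-e^{\imath r^{2}t})}{r^{2}-\xi}
\quad\Longrightarrow\quad
|\gamma_{t}(\xi,r)|^{2}\,=\,\frac{4\sin^{2}\!\bigl((\xi-r^{2})t/2\bigr)}{(\xi-r^{2})^{2}}\,,
\]
so that, after the change of variable $u=\xi-r^{2}$, the inner integral becomes
\[
J_{n}(r)\,:=\,\int_{|\xi|\leq 4^{n}}\frac{|\xi|^{1-\alpha}}{(r^{2}-\xi)^{2}}\cdot 4\sin^{2}\!\bigl((\xi-r^{2})t/2\bigr)\,d\xi
\,=\,\int_{-4^{n}-r^{2}}^{4^{n}-r^{2}}|u+r^{2}|^{1-\alpha}\,\frac{4\sin^{2}(ut/2)}{u^{2}}\,du\,.
\]
The key point is that the kernel $K_{t}(u):=4\sin^{2}(ut/2)/u^{2}$ behaves as an approximate identity in the following sense: $\int_{\R}K_{t}(u)\,du=2\pi t$, and $\int_{|u|\geq A}K_{t}(u)\,du\lesssim A^{-1}$ for every $A>0$. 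Since $u\mapsto |u+r^{2}|^{1-\alpha}$ is Hölder continuous near $u=0$ whenever $r>0$ (with $|u+r^{2}|^{1-\alpha}\big|_{u=0}=r^{2(1-\alpha)}$), one expects that $J_{n}(r)\sim 2\pi t\, r^{2(1-\alpha)}$ in the regime $1\ll r\ll 2^{n}$.

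I would then decompose the $r$-integration according to the size of $r$. For $r\in[0,1]$, a direct inspection (using $\sin^{2}(\xi t/2)\lesssim \min(1,\xi^{2}t^{2})$ and $\alpha\in(0,2)$) shows that $J_{n}(r)$ is bounded uniformly in $n$, so the condition $\alpha+\kappa>1$ (which is exactly what makes $r^{2\alpha+2\kappa-3}$ integrable near $0$) ensures that the contribution of $r\in[0,1]$ is $O(1)$ and thus negligible against the divergent main terms. For $r\in[1,2^{n}]$, I would write
\[
J_{n}(r)\,=\,r^{2(1-\alpha)}\biggl[\int_{\R}K_{t}(u)\,du\biggr]\,+\,\text{error}(n,r)\,,
\]
and split the error into \emph{(i)} a \emph{truncation error} from restricting the integration domain to $[-4^{n}-r^{2},\,4^{n}-r^{2}]$, bounded by $r^{2(1-\alpha)}/(4^{n}-r^{2})$ when $r^{2}<4^{n}$, together with a boundary contribution handled using $|u+r^{2}|^{1-\alpha}\lesssim |u|^{1-\alpha}+r^{2(1-\alpha)}$; and \emph{(ii)} a \emph{regularity error} of the form
\[
\int_{\R}K_{t}(u)\bigl[|u+r^{2}|^{1-\alpha}-r^{2(1-\alpha)}\bigr]du\,,
\]
which, after a splitting $|u|\lessgtr r$, gives a gain of a negative power of $r$ by Hölder continuity. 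These errors are integrable in $r$ against $r^{2\alpha+2\kappa-3}$ and produce subleading terms.

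Finally, inserting the main asymptotic $J_{n}(r)\sim 2\pi t\, r^{2(1-\alpha)}$ into the outer integral yields
\[
\int_{0}^{2^{n}}r^{2\alpha+2\kappa-3}J_{n}(r)\,dr\;\sim\;2\pi t\int_{1}^{2^{n}}r^{2\kappa-1}\,dr\;=\;\left\{
\begin{array}{ll}
\dfrac{\pi t}{\kappa}\bigl(4^{n\kappa}-1\bigr) & \text{if}\ \kappa>0,\\[2mm]
2\pi t\, n\ln 2 & \text{if}\ \kappa=0,
\end{array}\right.
\]
which is exactly the claim ($2\pi t\ln 2=\pi t\ln 4$ in the second line). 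The main technical obstacle in this plan lies in step \emph{(ii)} above: one must control the regularity error uniformly in $r$ up to the edge $r^{2}\approx 4^{n}$, where the integration window for $u$ becomes narrow on one side and the standard Fejér-kernel heuristic degenerates. This is where the restriction $\alpha\in(0,2)$ plays a role, guaranteeing that $|u+r^{2}|^{1-\alpha}$ does not grow too fast as $u\to -r^{2}$, and allowing one to absorb the boundary contribution into a lower-order term that integrates to at most $O(4^{n\kappa}/\log)$ in $r$ for $\kappa>0$, respectively $O(1)$ for $\kappa=0$.
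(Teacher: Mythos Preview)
Your strategy differs genuinely from the paper's. The paper first substitutes $r\mapsto r^{1/2}$ and then rescales both variables by $4^n$, reducing the whole expression to $t\cdot 4^{n\kappa}\int_0^1 h(r)\,(\Phi_n\ast g)(r)\,dr$ on the fixed domain $[0,1]\times[-1,1]$, where $\Phi_n$ is a Fej\'er-type approximate identity with parameter $4^n t/2$; for $\kappa>0$ the limit is then read off from $L^p$--$L^{p'}$ duality ($h\in L^{p'}$, $g\in L^p$), while for $\kappa=0$ this fails (the product $hg$ is not integrable) and the paper resorts to a separate argument computing $f''(T)\sim\pi/T$. Your route keeps the kernel $K_t$ fixed and lets $r$ grow, extracting the pointwise asymptotic $J_n(r)\sim 2\pi t\,r^{2(1-\alpha)}$; this has the pleasant feature of treating $\kappa>0$ and $\kappa=0$ uniformly.

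There is, however, a gap in your error control near the edge $r\approx 2^n$, and it sits in step (i) rather than step (ii). Your stated truncation bound $r^{2(1-\alpha)}/(4^n-r^2)$ is \emph{not} integrable against $r^{2\alpha+2\kappa-3}$ up to $r=2^n$: after the substitution $r=2^n s$ one obtains $4^{n\kappa}\cdot 4^{-n}\int^1 s^{2\kappa-1}(1-s^2)^{-1}\,ds$, which diverges logarithmically. The fix is to pair that bound with the trivial one $\int_{4^n-r^2}^\infty K_t\leq \pi t$; the two cross over at $1-s^2\sim 4^{-n}$, and a short computation then gives an edge contribution of order $4^{n(\kappa-1)}n$, which is $o(4^{n\kappa})$ for $\kappa>0$ and $o(n)$ for $\kappa=0$. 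By contrast, the regularity error (ii) is harmless throughout: even without any symmetry cancellation, the crude bound $\big||u+r^2|^{1-\alpha}-r^{2(1-\alpha)}\big|\lesssim r^{-2\alpha}|u|$ on $|u|\leq r^2/2$, together with the tail estimate on $|u|>r^2/2$, yields $O(r^{-2\alpha}\log r)$, which integrates to a lower-order term for every $\kappa\geq 0$. With this repair your plan goes through.
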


\begin{proof}
It is easy to check that
$$|\gamma_t(\xi,r)|^2=2\frac{1-\cos(t(\xi-r^2))}{|r^2-\xi|^2}\, ,$$ 
which allows us to write the integral under consideration as
\begin{align}
\int_{0}^{2^n}\frac{dr}{r^{-2\alpha-2\kappa+3}}\int_{|\xi|\leq 4^n}\frac{d\xi}{|\xi|^{\alpha-1}}|\gamma_t(\xi,r)|^2 &=
\int_{0}^{4^n}\frac{dr}{r^{-\alpha-\kappa+2}}\int_{|\xi|\leq 4^n}\frac{d\xi}{|\xi|^{\alpha-1}}\frac{1-\cos(t(\xi-r))}{(\xi-r)^2}\nonumber \\
&= 4^{n\kappa}\int_{0}^{1}\frac{dr}{r^{-\alpha-\kappa+2}}\int_{|\xi|\leq 1}\frac{d\xi}{|\xi|^{\alpha-1}}\frac{1-\cos(4^{n}t(\xi-r))}{4^{n}(\xi-r)^2}\nonumber \\
&= t\cdot4^{n\kappa}\int_{0}^{1}\frac{dr}{r^{-\alpha-\kappa+2}}\int_{|\xi|\leq 1}\frac{d\xi}{|\xi|^{\alpha-1}}\frac{\sin^{2}(\frac{4^{n}t}{2}(\xi-r))}{\frac{4^{n}t}{2}(\xi-r)^2}\, .\label{starti-id}
\end{align}

\

\

\noindent
\textit{First case: $\ka>0$}.

\smallskip

Let us set $\Phi(x):=\frac{\sin^{2}(x)}{\pi x^2}$ for every $x\in \R$. This function is positive, even and satisfies $\int_{\mathbb{R}}\Phi(x)dx=1$, so that the sequence
$$\Phi_n(x):=\frac{4^{n}t}{2}\Phi\Big(\frac{4^{n}t}{2}x\Big)$$
defines an approximation of the identity. Let us now rewrite the quantity in~\eqref{starti-id} as
\begin{align*}
&t4^{n\kappa}\int_{0}^{1}\frac{dr}{r^{-\alpha-\kappa+2}}\int_{|\xi|\leq 1}\frac{d\xi}{|\xi|^{\alpha-1}}\frac{\sin^{2}(\frac{4^{n}t}{2}(\xi-r))}{\frac{4^{n}t}{2}(\xi-r)^2}\\
&= \pi t 4^{n\kappa}\int_{0}^{1}\frac{dr}{r^{-\alpha-\kappa+2}}\int_{|\xi|\leq 1}\frac{d\xi}{|\xi|^{\alpha-1}}\Phi_n(\xi-r)= \pi t 4^{n\kappa}\int_{\mathbb{R}}h(r)(\Phi_n*g)(r)dr
\end{align*}
where we have set
$$h(r):=\mathbbm{1}_{[0,1]}(r)\frac{1}{r^{-\alpha-\kappa+2}} \quad \text{and} \quad g(\xi):=\mathbbm{1}_{[-1,1]}(\xi)\frac{1}{|\xi|^{\al-1}}\, .$$
Due to our assumptions on $\ka,\al$ (i.e., $\ka>0$, $\al\in (0,2)$ and $\alpha+\kappa>1$), we can exhibit a pair of conjugate exponents $p, p'>1$ such that 
$$g \in L^p(\mathbb{R}) \quad \text{and}\quad h \in L^{p'}(\mathbb{R}) \, .$$
Indeed, it is easy to check that any $p$ such that
$$\max(0,\al-1) < \frac{1}{p} < \min(1,\al+\ka-1)$$
meets the above requirements. Let us fix such a pair $(p,p')$, and now write
\begin{align*}
\Big|\int_{\mathbb{R}}h(r)(\Phi_n*g)(r)dr-\int_{\mathbb{R}}h(r)g(r)dr\Big|&=\Big|\int_{\mathbb{R}}h(r)[(\Phi_n*g)(r)-g(r)]dr\Big| \\
&\leq\Big(\int_{\mathbb{R}}|h(r)|^{p'}dr\Big)^{\frac{1}{p'}}\Big(\int_{\mathbb{R}}|(\Phi_n*g)(r)-g(r)|^{p}dr\Big)^{\frac{1}{p}}\, 
\end{align*}
which tends to $0$ as $n$ tends to infinity. Since $\int_{\mathbb{R}}h(r)g(r)dr=\frac{1}{\kappa}$, we obtain the desired conclusion
$$\int_{0}^{2^n}\frac{dr}{r^{-2\alpha-2\kappa+3}}\int_{|\xi|\leq 4^n}\frac{d\xi}{|\xi|^{\alpha-1}}|\gamma_t(\xi,r)|^2 \underset{n \rightarrow \infty}\sim \frac{\pi t}{\kappa} 4^{n\kappa} \, .$$

\

\

\noindent
\textit{Second case: $\ka=0$.}

\smallskip

Note first that, due to our assumptions, one has here $1<\al <2$. Besides, let us recall (see~\eqref{starti-id}) that
\begin{equation}\label{estim-ka-zero}
\int_{0}^{2^n}\frac{dr}{r^{-2\alpha+3}}\int_{|\xi|\leq 4^n}\frac{d\xi}{|\xi|^{\alpha-1}}|\gamma_t(\xi,r)|^2=t\int_{0}^{1}\frac{dr}{r^{-\alpha+2}}\int_{|\xi|\leq 1}\frac{d\xi}{|\xi|^{\alpha-1}}\frac{\sin^{2}(\frac{4^{n}t}{2}(\xi-r))}{\frac{4^{n}t}{2}(\xi-r)^2}\, .
\end{equation}
Consider the function $f$ defined for all $T>0$ by
$$f(T):=\int_{0}^{1}\frac{dr}{r^{-\alpha+2}}\int_{|\xi|\leq 1}\frac{d\xi}{|\xi|^{\alpha-1}}\frac{\sin^{2}(T(\xi-r))}{(\xi-r)^2}.$$
One has of course
\begin{eqnarray*}
f'(T)&=&\int_{0}^{1}\frac{dr}{r^{-\alpha+2}}\int_{|\xi|\leq 1}\frac{d\xi}{|\xi|^{\alpha-1}}\frac{2\sin(T(\xi-r))\cos(T(\xi-r))}{(\xi-r)}\\
&=&\int_{0}^{1}\frac{dr}{r^{-\alpha+2}}\int_{|\xi|\leq 1}\frac{d\xi}{|\xi|^{\alpha-1}}\frac{\sin(2T(\xi-r))}{(\xi-r)}
\end{eqnarray*}
and
\begin{eqnarray*}
f''(T)&=&2\int_{0}^{1}\frac{dr}{r^{-\alpha+2}}\int_{|\xi|\leq 1}\frac{d\xi}{|\xi|^{\alpha-1}}\cos(2T(\xi-r))\\
&=&2\int_{0}^{1}\frac{dr}{r^{-\alpha+2}}\int_{0}^1\frac{d\xi}{|\xi|^{\alpha-1}}\{\cos(2T(\xi+r))+\cos(2T(\xi-r))\}\\
&=&4\bigg(\int_{0}^{1}dr\, \frac{\cos(2Tr)}{r^{-\alpha+2}}\bigg) \bigg(\int_{0}^1 d\xi\,\frac{\cos(2T\xi)}{|\xi|^{\alpha-1}}\bigg) \, = \, \frac{2}{T}\bigg(\int_{0}^{2T}dr\, \frac{\cos(r)}{r^{-\alpha+2}}\bigg) \bigg(\int_{0}^{2T} d\xi\,\frac{\cos(\xi)}{|\xi|^{\alpha-1}}\bigg) \, .
\end{eqnarray*}
At this point, let us recall the following standard Fourier transform: for $\beta \in (0,1)$ and $\xi \neq 0$,
$$\int_{\R}dx \, \frac{e^{-ix\xi}}{|x|^\beta}=\frac{2\sin\Big( \frac{\pi \be}{2}\Big)\Gamma(1-\beta)}{|\xi|^{1-\beta}} \, ,$$
which immediately yields
$$\int_{0}^\infty dx \, \frac{\cos(x)}{|x|^\beta}=\sin\Big( \frac{\pi \be}{2}\Big)\Gamma(1-\beta) \, ,$$
and therefore
$$f''(T) \sim \frac{2}{T} \sin\Big( \frac{\pi }{2}(2-\al)\Big)\sin\Big( \frac{\pi }{2}(\al-1)\Big)\Gamma(\al-1)\Gamma(2-\al) \, .$$
Using Euler's reflection formula 
$$\Gamma(\beta)\Gamma(1-\beta)=\frac{\pi}{\sin(\pi \beta)}\quad \quad (\be\in (0,1)) \, ,$$
the above quantity reduces to
\begin{align*}
&\sin\Big( \frac{\pi }{2}(2-\al)\Big)\sin\Big( \frac{\pi }{2}(\al-1)\Big)\Gamma(\al-1)\Gamma(2-\al) \\
&=\pi \frac{\sin\Big( \frac{\pi }{2}(2-\al)\Big)\sin\Big( \frac{\pi }{2}(\al-1)\Big)}{\sin(\pi(\al-1))}=\frac{\pi}{2}\, \frac{\sin\Big( \frac{\pi }{2}(2-\al)\Big)\sin\Big( \frac{\pi }{2}(\al-1)\Big)}{\sin\Big( \frac{\pi }{2}(\al-1)\Big)\cos\Big( \frac{\pi }{2}(\al-1)\Big)}=\frac{\pi}{2} \, ,
\end{align*}
and so 
$$f''(T)\underset{T \rightarrow \infty}\sim \frac{\pi}{T}\, .$$
We can finally use a standard comparison argument (see Lemma~\ref{int} below) to successively assert that
$f'(T) \underset{T \rightarrow \infty}\sim \pi \ln(T)$ and
$$f(T)\underset{T \rightarrow \infty}\sim \pi (T\ln(T)-T)\underset{T \rightarrow \infty}\sim \pi T\ln(T)\, .$$
Going back to~\eqref{estim-ka-zero}, we get the desired conclusion, namely
$$\int_{0}^{2^n}\frac{dr}{r^{-2\alpha+3}}\int_{|\xi|\leq 4^n}\frac{d\xi}{|\xi|^{\alpha-1}}|\gamma_t(\xi,r)|^2 \underset{n \rightarrow \infty}\sim \pi t n \ln(4)\, .$$
\end{proof}

\begin{lemma}\label{int}
Fix $a \in \mathbb{R}$ and let $g:[a,+\infty[\rightarrow \mathbb{R}$, $h:[a,+\infty[\rightarrow (0,\infty)$, be two continuous functions. If $g(t)\underset{t \rightarrow \infty}\sim h(t)$ and $\int_a^{+\infty}h(t)dt=\infty$, then
$$\int_a^{T}g(t)dt\underset{T \rightarrow \infty}\sim \int_a^{T}h(t)dt \, .$$
\end{lemma}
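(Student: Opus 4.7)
The plan is to exploit the fact that $\int_a^T h(t)\,dt$ diverges, so any bounded contribution from a fixed initial interval becomes negligible in the ratio $\int_a^T g / \int_a^T h$. Concretely, I would fix $\varepsilon >0$, and use the hypothesis $g(t)\sim h(t)$ together with $h>0$ to select $T_0\geq a$ such that $(1-\varepsilon)h(t)\leq g(t)\leq (1+\varepsilon)h(t)$ for every $t\geq T_0$. Integrating this two-sided bound on $[T_0,T]$ gives
$$(1-\varepsilon)\int_{T_0}^T h(t)\,dt\ \leq\ \int_{T_0}^T g(t)\,dt\ \leq\ (1+\varepsilon)\int_{T_0}^T h(t)\,dt.$$

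Next I would split the full integrals as $\int_a^T = \int_a^{T_0}+\int_{T_0}^T$ for both $g$ and $h$, and divide through by $\int_a^T h(t)\,dt$. The key observation is that the two quantities $\int_a^{T_0} g(t)\,dt$ and $\int_a^{T_0} h(t)\,dt$ are fixed finite numbers (by continuity of $g,h$ on the compact interval $[a,T_0]$), while the denominator $\int_a^T h(t)\,dt$ tends to $+\infty$ as $T\to\infty$ by hypothesis. Hence these initial contributions vanish in the ratio, and we are left with
$$\limsup_{T\to\infty}\frac{\int_a^T g(t)\,dt}{\int_a^T h(t)\,dt}\leq 1+\varepsilon,\qquad \liminf_{T\to\infty}\frac{\int_a^T g(t)\,dt}{\int_a^T h(t)\,dt}\geq 1-\varepsilon.$$
Letting $\varepsilon\to 0$ yields the desired equivalence.

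There is essentially no obstacle: the only mild point to watch is that the comparison $(1-\varepsilon)h\leq g\leq (1+\varepsilon)h$ requires $h$ to be positive (so that multiplication by $h$ preserves the direction of the inequality and absolute values are not needed), which is guaranteed by the assumption $h:[a,+\infty[\to(0,\infty)$. Continuity of $g$ and $h$ is used only to make the integrals on $[a,T_0]$ well defined and finite.
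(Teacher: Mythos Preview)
Your proof is correct and is the standard argument for this classical comparison result. The paper does not actually supply a proof of this lemma, treating it as well known, so there is nothing to compare against; your approach is exactly the expected one.
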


\

\subsection{Global definition of the linear solution}\label{subsec:glob-def}

\

\smallskip

At first reading, the result of Proposition~\ref{sto} only provides us with a \emph{local} definition of the process $\<Psi>$. In other words, what is actually given by the statement is the set of the limit elements $\{\chi\<Psi>, \, \chi\in \cac_c^\infty(\R^d)\}$. For the sake of rigor, let us say a few words about how those elements can be glued together into a single process $\<Psi>$, which we can then inject into the transformation $v:=u-\<Psi>$ of Definition~\ref{defi:sol-regu} or Definition~\ref{defi:sol}.

\smallskip

To this end, fix $p\geq 2$ and $\al$ satisfying~\eqref{assump-gene-al}. Let us denote by $\cp$ the set of sequences $\si=(\si_k)_{k\geq 1}$ such that for each $k\geq 1$, $\si_k:\R^d \to \R$ is a smooth function satisfying
\begin{equation*}
    \si_k(x)=  \left\{
    \begin{array}{ll}
		1 & \mbox{if}\  \|x\|\leq k \, ,\\
        0 & \mbox{if} \ \|x\|\geq k+1\, .  
    \end{array}
\right.
\end{equation*}

Given such a sequence $\si$, and for each fixed $k\geq 1$, let us denote by $\<Psi>^{(\si_k)}$ the limit of the sequence $(\si_k\<Psi>_n)_{n\geq 1}$ in the space $\cac([0,T];\cw^{-\al,p}(\R^d))$, as provided by Proposition~\ref{sto}. We know in particular that $\<Psi>^{(\si_k)}$ is defined on a probability space $\Omega^{(\si_k)}$ of full measure $1$. Let us set $\Omega^{(\si)}:=\cap_{k\geq 1}\Omega^{(\si_k)}$, and note that this space is still of measure $1$.

\smallskip

For every fixed time $t \in [0,T]$, we now define the random distribution 
$$\<Psi>^{(\si)}(t):\Omega^{(\si)}\to \cd'(\R^d)$$
as follows: for every test function $\vp:\R^d \to \R$ such that $\textup{supp}(\vp) \subset B(0,k)$ (for some $k\geq 1$),
\begin{equation*}
\langle \<Psi>^{(\si)}(t), \vp \rangle := \langle \<Psi>^{(\si_k)}(t), \vp \rangle \, .
\end{equation*}

\begin{proposition}\label{prop:defi-psi-glo}
\begin{enumerate}[$(i)$]
\item The above distribution $\<Psi>^{(\si)}$ is well defined, i.e. for all $1\leq k\leq \ell$ and for every test function $\vp:\R^d \to \R$ with $\textup{supp}(\vp) \subset B(0,k)\subset B(0,\ell)$, one has
$$\langle \<Psi>^{(\si_k)}(t), \vp \rangle =\langle \<Psi>^{(\si_\ell)}(t), \vp \rangle  \quad \text{on} \ \ \Omega^{(\si)} \, .$$
\item For any test function $\chi:\R^d \to \R$, one has, on $\Omega^{(\si)}$,
$$\chi \cdot\<Psi>_n \underset{n \rightarrow \infty}\rightarrow \chi\cdot \<Psi>^{(\si)} \quad \text{in}\ \ \mathcal{C}([0,T]; \mathcal{W}^{-\al,p}(\mathbb{R}^d)) \, .$$
\item If $\si,\ga \in \cp$, it holds that
$$\<Psi>^{(\si)}=\<Psi>^{(\ga)} \quad \text{on} \ \ \Omega^{(\si)}\cap \Omega^{(\ga)} \, .$$
\end{enumerate}
Due to the latter identification property, we simply set $\<Psi>:=\<Psi>^{(\si)}$, for some fixed element $\si\in \cp$.
\end{proposition}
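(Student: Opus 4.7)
The argument is a standard sheaf-type patching, resting on two simple facts: (a) by definition of $\cp$, one has $\si_k \equiv 1$ on $B(0,k)$, and (b) multiplication by a smooth compactly-supported function is continuous on $\cw^{-\al,p}(\R^d)$. The main task is bookkeeping; no genuine analytic obstacle arises beyond what has already been established in Proposition~\ref{sto}.

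For item $(i)$, fix $1\leq k\leq \ell$ and a test function $\vp$ with $\textup{supp}(\vp)\subset B(0,k)$. Since $\si_k \equiv 1$ on $B(0,k)$ and $\si_\ell \equiv 1$ on $B(0,\ell)\supset B(0,k)$, we have $\si_k\vp = \vp = \si_\ell\vp$, hence at the smooth level
$$\langle \si_k \<Psi>_n(t), \vp\rangle = \langle \<Psi>_n(t), \si_k\vp\rangle = \langle \<Psi>_n(t), \si_\ell\vp\rangle = \langle \si_\ell \<Psi>_n(t), \vp\rangle.$$
On $\Omega^{(\si)}=\cap_k\Omega^{(\si_k)}$, the left-hand side converges to $\langle \<Psi>^{(\si_k)}(t),\vp\rangle$ and the right-hand side to $\langle \<Psi>^{(\si_\ell)}(t),\vp\rangle$, thanks to the convergence $\si_j\<Psi>_n \to \<Psi>^{(\si_j)}$ in $\cac([0,T];\cw^{-\al,p})$ given by Proposition~\ref{sto}.

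For item $(ii)$, let $\chi$ be a test function and pick $K\geq 1$ with $\textup{supp}(\chi)\subset B(0,K)$. Since $\si_K \equiv 1$ on $\textup{supp}(\chi)$, one has $\chi = \chi\si_K$, hence $\chi \<Psi>_n = \chi(\si_K\<Psi>_n)$. By Proposition~\ref{sto}, on $\Omega^{(\si_K)}\supset \Omega^{(\si)}$ the sequence $\si_K\<Psi>_n$ converges to $\<Psi>^{(\si_K)}$ in $\cac([0,T];\cw^{-\al,p})$, and since pointwise multiplication by the Schwartz function $\chi$ is bounded on $\cw^{-\al,p}(\R^d)$, one gets $\chi\<Psi>_n \to \chi\<Psi>^{(\si_K)}$ in the same space. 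It remains to identify $\chi\<Psi>^{(\si_K)}$ with $\chi\<Psi>^{(\si)}$, which follows from the very definition of $\<Psi>^{(\si)}$: for any test function $\vp$, the function $\chi\vp$ is supported in $B(0,K)$, so
$$\langle \chi \<Psi>^{(\si)}(t), \vp \rangle = \langle \<Psi>^{(\si)}(t), \chi\vp \rangle = \langle \<Psi>^{(\si_K)}(t), \chi\vp \rangle = \langle \chi \<Psi>^{(\si_K)}(t), \vp \rangle.$$

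For item $(iii)$, given $\si,\ga\in \cp$ and a test function $\vp$ with $\textup{supp}(\vp)\subset B(0,k)$, the identities $\si_k\vp = \vp = \ga_k\vp$ yield $\langle \si_k\<Psi>_n(t), \vp\rangle = \langle \ga_k\<Psi>_n(t), \vp\rangle$. Passing to the limit on $\Omega^{(\si)}\cap \Omega^{(\ga)}$ gives $\langle \<Psi>^{(\si_k)}(t),\vp\rangle = \langle \<Psi>^{(\ga_k)}(t),\vp\rangle$, that is $\langle \<Psi>^{(\si)}(t),\vp\rangle = \langle \<Psi>^{(\ga)}(t),\vp\rangle$. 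As $k$ and $\vp$ are arbitrary, the two random distributions coincide. If anything deserves attention, it is to keep track of the full-measure events $\Omega^{(\si_k)}$ (which is why one sets $\Omega^{(\si)}:=\cap_k\Omega^{(\si_k)}$) and to ensure that the various almost-sure convergences are compatible, but no deeper difficulty intervenes.
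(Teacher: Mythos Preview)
Your proof is correct and follows essentially the same approach as the paper: both arguments work at the level of the smooth approximations $\si_k\<Psi>_n$, use that $\si_k\vp=\vp$ when $\textup{supp}(\vp)\subset B(0,k)$, and pass to the limit via the convergence in $\cac([0,T];\cw^{-\al,p})$ from Proposition~\ref{sto} together with continuity of multiplication by a test function. The only cosmetic difference is that for item $(i)$ the paper first establishes the intermediate identity $\<Psi>^{(\si_k)}=\si_k\<Psi>^{(\si_\ell)}$ on $\Omega^{(\si_k)}\cap\Omega^{(\si_\ell)}$ and then deduces the pairing equality, whereas you go directly to the pairings; both routes are equivalent.
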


\begin{proof}  $(i)$ We first show that for   $ 1 \leq k \leq \ell$
\begin{equation}\label{uni}
\<Psi>^{(\sigma_k)}=\sigma_k \<Psi>^{(\sigma_{\ell})}\quad \text{on } \;  \Omega^{(\sigma_k)}\cap \Omega^{(\sigma_{\ell})}.
\end{equation}
By Proposition~\ref{sto},  the sequence $\sigma_k\sigma_{\ell} \<Psi>_n=\sigma_k \<Psi>_n$ converges almost surely to $\<Psi>^{(\sigma_k)}$  in the space $\mathcal{C}([0,T]; \mathcal{W}^{-\al,p}(\mathbb{R}^d))$, on $\Omega^{(\sigma_k)}$.   But by continuity of the product in $\mathcal{C}([0,T]; \mathcal{W}^{-\al,p}(\mathbb{R}^d))$ by the  test function $\sigma_k$, we also have that $\sigma_k\sigma_{\ell} \<Psi>_n$ converges almost surely to $\sigma_k\<Psi>^{(\sigma_{\ell})}$ in $\mathcal{C}([0,T]; \mathcal{W}^{-\al,p}(\mathbb{R}^d))$ on $\Omega^{(\sigma_{\ell})}$, and we deduce \eqref{uni} by uniqueness of the  limit. Therefore by \eqref{uni},  for all $t \in [0,T]$ and  $\vp \in \mathcal{C}^{\infty}_c(\mathbb{R}^d)$ such that $\textup{supp}(\vp) \subset {B}(0,k)$ 
$$
\langle \<Psi>^{(\sigma_k)}(t), \vp \rangle =\langle \sigma_k \<Psi>^{(\sigma_{\ell})}(t), \vp \rangle =\langle  \<Psi>^{(\sigma_{\ell})}(t), \sigma_k\vp \rangle = \langle  \<Psi>^{(\sigma_{\ell})}(t), \vp \rangle\quad \text{on } \;  \Omega^{(\sigma)}.
$$

$(ii)$ Let  $\chi \in \mathcal{C}^{\infty}_c(\mathbb{R}^d)$, then  there exists $k\geq 1$ such that $\textup{supp}(\chi) \subset  {B}(0,k)$. According to Proposition~\ref{sto}, $\chi \<Psi>_n=\chi \sigma_k \<Psi>_n$ converges almost surely to $\chi \<Psi>^{(\sigma_k)}$ in $\mathcal{C}([0,T]; \mathcal{W}^{-\al,p}(\mathbb{R}^d))$ on $\Omega^{(\sigma_k)}$.     But $\chi \<Psi>^{(\sigma_k)}=\chi\<Psi>^{(\sigma)}$ on $\Omega^{(\sigma_k)}$. Indeed, for all $t \in [0,T],$ if $\vp \in \mathcal{C}^{\infty}_c(\mathbb{R}^d),$
$$
\langle \chi\<Psi>^{(\sigma)}(t), \vp \rangle = \langle \<Psi>^{(\sigma)}(t), \chi \vp \rangle =\langle  \<Psi>^{(\sigma_k)}(t), \chi \vp \rangle =\langle  \chi \<Psi>^{(\sigma_k)}(t), \vp \rangle,
$$
where we have used that $\textup{supp}(\chi \vp) \subset {B}(0,k)$ to derive the second equality.

$(iii)$ For all $t \in [0,T]$ and $\vp \in \mathcal{C}^{\infty}_c(\mathbb{R}^d)$ such that $\textup{supp}(\vp) \subset  {B}(0,k),$ we have
$$
\langle \sigma_k \<Psi>_n(t), \vp \rangle = \langle \sigma_k \gamma_k  \<Psi>_n(t),  \vp \rangle = \langle   \gamma_k\<Psi>_n(t), \sigma_k \vp \rangle  = \langle   \gamma_k \<Psi>_n(t),   \vp \rangle,
$$
then by taking the limit $n \longrightarrow +\infty$, we get   $ \langle \<Psi>^{(\sigma_k)}(t), \vp \rangle = \langle \<Psi>^{(\gamma_k)}(t), \vp \rangle $  on  $\Omega^{(\sigma_k)}\cap \Omega^{(\gamma_{k})}$, hence the result.     
\end{proof}

\begin{remark}
The above patching procedure could of course be applied to the second-order process $\chi^2 \<Psi2>$ as well, leading to a well-defined distribution-valued function $\<Psi2>$.
\end{remark}

\section{Deterministic analysis of the equation under \texorpdfstring{condition~\eqref{cond-regu-case}}{H1}}\label{sec:regular}

In this section, we propose to analyze the equation in the \emph{regular} situation, that is when assumption~\eqref{cond-regu-case} on the Hurst index is satisfied, and the linear solution $\rho \<Psi>$ (defined by Proposition~\ref{sto}) is known to be a function of time \emph{and} space. Remember that in this case, the model is interpreted through Definition~\ref{defi:sol-regu}. Thus, what we wish to solve in this section is the equation 
\begin{multline}\label{det-0}
v_t=S_t(\phi)-\imath\int_0^t S_{t-\tau}(\rho^2 |v_{\tau}|^2)\, d\tau-\imath\int_0^t S_{t-\tau}((\rho \overline{v}_{\tau})\cdot(\rho \<Psi>_{\tau}))\, d\tau\\
-\imath\int_0^t S_{t-\tau}((\rho v_{\tau})\cdot(\overline{\rho \<Psi>_{\tau}}))\, d\tau-\imath\int_0^t S_{t-\tau}(|\rho\<Psi>_{\tau}|^2)\, d\tau \, , \quad t\in [0,T]\, .
\end{multline}

\smallskip

As opposed to the stochastic arguments used in the previous section, our strategy towards a (local) solution $v$ for~\eqref{det-0} will rely on deterministic estimates only. In other words, we henceforth consider $\rho\<Psi>$ as a fixed (i.e., non-random) element in the space
$$\mathcal{E}_{\beta}:=\bigcap_{2\leq p\leq \infty}\mathcal{C}([0,T];\mathcal{W}^{\beta,p}(\mathbb{R}^d))= \mathcal{C}([0,T];H^{\beta}(\mathbb{R}^d)) \cap \mathcal{C}([0,T];\mathcal{W}^{\beta,\infty}(\mathbb{R}^d))\, ,$$
for some appropriate $0<\beta<1$ (where $\beta=-\alpha$ is given by Proposition~\ref{sto}), and try to solve the following deterministic equation: for $\luxor \in \ce_\be$,
\begin{multline}\label{det}
v_t=S_t(\phi)-\imath\int_0^t S_{t-\tau}(\rho^2|v_{\tau}|^2)\, d\tau-\imath \int_0^t S_{t-\tau}((\rho \overline{v}_{\tau})\cdot \luxor_{\tau})\, d\tau\\
-\imath \int_0^t S_{t-\tau}((\rho v_{\tau})\cdot \overline{\luxor}_{\tau})\, d\tau-\imath\int_0^t S_{t-\tau}(|\luxor_{\tau}|^2)\, d\tau \, .
\end{multline}

Let us set the stage for this solving procedure by reporting on fundamental estimates related to the two main operations in~\eqref{det}.

\subsection{Pointwise multiplication}

\begin{lemma}[Fractional Leibniz rule, see~{\cite[Proposition 1.1, p. 105]{Taylor}}]\label{lem:frac-leibniz}
Let $s\geq 0$, $1<r<\infty$ and $1<p_1,p_2,q_1,q_2< \infty$ satisfying 
$$\frac{1}{r}=\frac{1}{p_1}+\frac{1}{p_2}=\frac{1}{q_1}+\frac{1}{q_2} \, .$$ 
Then one has
$$\|u\cdot v\|_{\mathcal{W}^{s,r}(\mathbb{R}^d)}\lesssim \|u\|_{\mathcal{W}^{s,p_1}(\mathbb{R}^d)}\|v\|_{L^{p_2}(\mathbb{R}^d)}+\|u\|_{L^{q_1}(\mathbb{R}^d)}\|v\|_{\mathcal{W}^{s,q_2}(\mathbb{R}^d)} \, .$$
\end{lemma}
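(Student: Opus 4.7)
The statement is the classical Kato--Ponce (fractional Leibniz) inequality, which the authors invoke as a black box from Taylor's book, so my ``proof'' would really be to reproduce the standard Littlewood--Paley argument. The plan is to reduce the inequality to the homogeneous case (the inhomogeneous version then follows by adding the trivial $L^r$ bound $\|uv\|_{L^r}\leq \|u\|_{L^{q_1}}\|v\|_{L^{p_2}}$ and interpolating, or by pulling a factor $\{1+|\xi|^2\}^{s/2}$ inside and treating the low frequencies separately via H\"older), and then to attack $\|\,|D|^s(uv)\|_{L^r}$ through the Bony paraproduct decomposition
\[
uv=T_uv+T_vu+R(u,v),
\]
where $T_uv:=\sum_k S_{k-2}u\cdot\Delta_k v$ and $R(u,v):=\sum_{|j-k|\leq 1}\Delta_j u\cdot\Delta_k v$, with $\Delta_k$ and $S_k$ the standard Littlewood--Paley projectors.

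The first step is to handle the low--high paraproduct $T_uv$: the spectral support of $S_{k-2}u\cdot\Delta_k v$ sits in an annulus of size $2^k$, so Bernstein's inequality gives
\[
\|\,|D|^s T_u v\|_{L^r}\lesssim \Big\|\Big(\sum_k 2^{2sk}|S_{k-2}u\cdot \Delta_k v|^2\Big)^{1/2}\Big\|_{L^r},
\]
and a H\"older split with exponents $(q_1,q_2)$ followed by the Fefferman--Stein vector-valued maximal inequality (to control $S_{k-2}u$ by the Hardy--Littlewood maximal function of $u$) yields the bound $\|u\|_{L^{q_1}}\|v\|_{\mathcal{W}^{s,q_2}}$. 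By symmetry, the high--low piece $T_vu$ is dominated by $\|u\|_{\mathcal{W}^{s,p_1}}\|v\|_{L^{p_2}}$.

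The second step is the resonant remainder $R(u,v)$. Here the frequency support of each summand is a ball, not an annulus, so one cannot directly move $|D|^s$ onto a single factor; instead one writes $|D|^s R(u,v)=\sum_k |D|^s(\Delta_k u\cdot \widetilde{\Delta}_k v)$ with $\widetilde{\Delta}_k:=\Delta_{k-1}+\Delta_k+\Delta_{k+1}$, and exploits the fact that $|D|^s$ acting on a function with Fourier support in $\{|\xi|\lesssim 2^k\}$ produces a factor $2^{sk}$. A discrete Minkowski/H\"older argument with exponents $(p_1,p_2)$ (or $(q_1,q_2)$, whichever is more convenient) together with the same maximal-function machinery then bounds this piece by either term on the right-hand side of the claimed inequality. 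Summing the three contributions and invoking the Littlewood--Paley square-function characterization of $\mathcal{W}^{s,r}$ (valid for $1<r<\infty$ via the Mikhlin multiplier theorem) concludes the proof.

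The main obstacle in this outline is the vector-valued maximal inequality, which is what forces the restrictions $1<r<\infty$ and $1<p_i,q_i<\infty$; without it one cannot swap the $\ell^2_k$ and $L^r_x$ norms when estimating the low-frequency factors $S_{k-2}u$. Everything else (Bernstein, the multiplier characterization of $\mathcal{W}^{s,r}$, the summability in $k$ coming from the localized frequency supports) is routine. Since the lemma is stated with explicit attribution to~\cite{Taylor}, I would in practice just cite that reference rather than reproduce the Littlewood--Paley computation.
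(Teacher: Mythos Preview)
Your outline is the standard Kato--Ponce argument and is essentially correct. However, the paper does not prove this lemma at all: it is stated with an explicit citation to \cite[Proposition 1.1, p.~105]{Taylor} and used as a black box throughout Sections~\ref{sec:regular} and~\ref{sec:irreg-case-det}. So there is nothing to compare against; your final remark that in practice one would simply cite the reference is exactly what the authors do.
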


\smallskip

\subsection{Convolution with the Schr{\"o}dinger group}

\

\smallskip

Naturally, we also need some control on the operation $(\phi,F,t)\mapsto S_t\phi-\imath\int_0^t S_{t-s}(F_s) \, ds$, or otherwise stated some estimate on the solution $u$ to the general Schr\"{o}dinger equation
\begin{equation}\label{dim-d-schrodinger}
\left\{
\begin{array}{l}
\imath\partial_t u(t,x)-\Delta u(t,x)= F(t,x) \, , \quad  t\in [0,T] \, , \, x\in \R^d \, ,\\
u(0,x)=\phi(x)\, .
\end{array}
\right.
\end{equation}
Such a control is classically provided by the so-called Strichartz inequalities, which will prove to be sufficient for our purpose in this functional setting.

\begin{definition}\label{pair}
A pair $(p,q)$ is said to be Schr\"{o}dinger admissible if $$(p,q) \in [2,\infty]^2,\hspace{0,1cm} (p,q,d)\neq (2,\infty,2),\hspace{0,1cm} \frac{2}{p}+\frac{d}{q}=\frac{d}{2}.$$ 
\end{definition}

\begin{lemma}[Strichartz inequalities, see~{\cite[Paragraph 2.3]{caz}}]\label{lem:strichartz}
Fix $d\geq 1$, $s \in \mathbb{R}$, and let $u$ stand for the mild solution of equation~\eqref{dim-d-schrodinger}.

\smallskip

Then for all Schr\"{o}dinger admissible pairs $(p,q)$ and $(a,b)$,  it holds that
\begin{equation}\label{strichartz}
\|u\|_{L^p([0,T]; \mathcal{W}^{s,q}(\mathbb{R}^{d}))}\lesssim \|\phi\|_{H^s(\mathbb{R}^d)}+\|F\|_{L^{a'}([0,T];\mathcal{W}^{s,b'}(\mathbb{R}^d))}\, ,
\end{equation}
where the notations $a',b'$ refer to the H{\"o}lder conjugates of $a,b$.
\end{lemma}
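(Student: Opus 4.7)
The plan is to follow the classical $TT^*$--duality strategy for Strichartz inequalities, whose core input is the dispersive decay of the Schr\"{o}dinger propagator. Since the authors cite Cazenave for this result, the outline below is the standard one; I would simply recall the main architecture and point to the only nontrivial step.

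First, I would establish the two basic bounds on $S_t$: the mass-conservation identity $\|S_t f\|_{L^2(\R^d)}=\|f\|_{L^2(\R^d)}$ (from Plancherel, since $\widehat{S_t f}(\xi)=e^{\imath t|\xi|^2}\widehat f(\xi)$), and the dispersive estimate $\|S_t f\|_{L^\infty(\R^d)}\lesssim |t|^{-d/2}\|f\|_{L^1(\R^d)}$, obtained from the explicit Gaussian-type kernel $S_t(x)=(4\pi\imath t)^{-d/2}e^{\imath |x|^2/(4t)}$. Riesz--Thorin interpolation between these two bounds yields $\|S_t f\|_{L^q(\R^d)}\lesssim |t|^{-d(\frac12-\frac1q)}\|f\|_{L^{q'}(\R^d)}$ for every $q\in[2,\infty]$.

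Next, I would run the $TT^*$ argument: the homogeneous estimate $\|S_t\phi\|_{L^p_t L^q_x}\lesssim \|\phi\|_{L^2}$ is equivalent, by duality, to
\begin{equation*}
\Big\| \int_0^T S_{t-s}F(s)\, ds\Big\|_{L^p_t L^q_x}\lesssim \|F\|_{L^{p'}_t L^{q'}_x},
\end{equation*}
and the kernel of this bilinear form has size $\lesssim |t-s|^{-d(\frac12-\frac1q)}=|t-s|^{-2/p}$ thanks to the previous interpolation; since the admissibility relation $\frac{2}{p}+\frac{d}{q}=\frac{d}{2}$ pins this exponent exactly at the Hardy--Littlewood--Sobolev threshold, applying HLS in the time variable closes the estimate for every non-endpoint admissible $(p,q)$. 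The endpoint $p=2$, $q=\frac{2d}{d-2}$ in dimension $d\geq 3$ is the \emph{main obstacle}: HLS is not directly applicable there and one must invoke the Keel--Tao endpoint argument (Whitney decomposition of $\{s<t\}$ in time plus bilinear interpolation). For the mixed-pair inhomogeneous estimate, I would combine the homogeneous bound applied to the adjoint with the Christ--Kiselev lemma to replace the full-time integral by the retarded integral $\int_0^t$, which is legitimate since the relevant exponents satisfy $p>a'$ outside the endpoint.

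Finally, the Sobolev version is immediate: the operator $(\mathrm{Id}-\Delta)^{s/2}$ is a Fourier multiplier and commutes with $S_t$ as well as with the Duhamel integral, so applying it to both sides of the Duhamel formula reduces $\|u\|_{L^p_T\cw^{s,q}}$ to the $L^2$-based Strichartz estimate for $(\mathrm{Id}-\Delta)^{s/2}u$, which solves~\eqref{dim-d-schrodinger} with data $(\mathrm{Id}-\Delta)^{s/2}\phi\in H^0$ and source $(\mathrm{Id}-\Delta)^{s/2}F\in L^{a'}_T L^{b'}$. This yields~\eqref{strichartz} in full generality.
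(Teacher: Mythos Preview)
The paper does not supply its own proof of this lemma: it is stated with a direct citation to Cazenave's monograph, and no argument is given in the text. Your outline is precisely the standard $TT^*$/dispersive-estimate proof that one finds in that reference (and in Keel--Tao for the endpoint), so there is nothing to compare; your sketch is correct and matches the intended source.
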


\

\subsection{Solving the equation}\label{subsec:determ-regu}

\

\smallskip

Our main result regarding equation~\eqref{det} can be stated as follows:
\begin{theorem}\label{thm:regular}
Assume that $1\leq d\leq 4$ and fix $\beta \in (0,1)$. Consider the Schr{\"o}dinger admissible pair $(p,q)$ given by the formulas
 $$p=\frac{12}{d-\beta} \ , \ q=\frac{6d}{2d+\beta} \, ,$$
and for every $T>0$, define the space $X^{\beta}(T)$ as
$$X^{\beta}(T):=\mathcal{C}([0,T]; H^\beta(\mathbb{R}^d))\cap L^p([0,T];\mathcal{W}^{\beta,q}(\mathbb{R}^d)).$$
Then for all $\phi \in H^\beta(\mathbb{R}^d)$ and $\luxor \in \mathcal{E}_{\beta}$, there exists a time $T_0>0$ such that equation~\eqref{det} admits a unique solution in $X^{\beta}(T_0)$.
\end{theorem}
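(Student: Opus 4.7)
The proof will proceed by a standard Banach fixed-point argument in the complete metric space $X^\beta(T_0)$ (restricted to a suitable closed ball), for $T_0>0$ small enough depending on $\|\phi\|_{H^\beta}$ and $\|\luxor\|_{\mathcal{E}_\beta}$. Define the map
\[
\Gamma(v)_t:=S_t(\phi)-\imath\int_0^t S_{t-\tau}(\rho^2|v_\tau|^2)\,d\tau-\imath\int_0^t S_{t-\tau}\bigl((\rho\overline{v}_\tau)\luxor_\tau+(\rho v_\tau)\overline{\luxor}_\tau+|\luxor_\tau|^2\bigr)\,d\tau.
\]
A preliminary computation confirms that $(p,q)$ is Schrödinger admissible in the sense of Definition~\ref{pair}: indeed
$\tfrac{2}{p}+\tfrac{d}{q}=\tfrac{d-\beta}{6}+\tfrac{2d+\beta}{6}=\tfrac{d}{2}$, and the restriction $1\leq d\leq 4$ together with $\beta\in(0,1)$ ensures $p,q\in[2,\infty)$.

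The core of the plan is to apply the Strichartz inequality of Lemma~\ref{lem:strichartz} with the pair $(p,q)$ on the left-hand side and an appropriately chosen dual pair $(a,b)$ (of the form $(a',b')$ on the right) to bound $\Gamma(v)$ in $X^\beta(T_0)$. For each nonlinear term I would then combine the fractional Leibniz rule (Lemma~\ref{lem:frac-leibniz}) with Sobolev embeddings and H\"older in time, arranging the exponents so as to produce a genuine positive power $T_0^\theta$ in front of the nonlinear estimates. Concretely: for $\rho^2|v|^2$ the Leibniz rule gives $\|\rho^2|v|^2\|_{\mathcal{W}^{\beta,q'}}\lesssim\|v\|_{\mathcal{W}^{\beta,q}}\|v\|_{L^r}$ for a suitable $r$, whose $L^{p_2}_T$-norm is then controlled by $\|v\|_{L^p_T\mathcal{W}^{\beta,q}}$ via Sobolev embedding ($d\leq 4$ and $\beta<1$ are precisely what make the relevant embedding admissible); the time-H\"older argument yields the prefactor $T_0^\theta$. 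The mixed terms $(\rho\overline{v}_\tau)\luxor_\tau$ and $(\rho v_\tau)\overline{\luxor}_\tau$ are handled identically, this time using $\luxor\in\mathcal{C}_T\mathcal{W}^{\beta,\infty}$ to place $\luxor$ in $L^\infty_x$ and apply Lemma~\ref{lem:frac-leibniz} with $\luxor$ in the low-integrability slot. The purely stochastic term $|\luxor|^2$ is immediate from $\luxor\in\mathcal{E}_\beta$: the algebra property $\mathcal{W}^{\beta,\infty}\cap H^\beta\hookrightarrow H^\beta$ (for $\beta>0$) gives $|\luxor|^2\in\mathcal{C}_T H^\beta$, whose integral in $\tau$ is handled trivially.

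Putting these bounds together yields the key estimate
\[
\|\Gamma(v)\|_{X^\beta(T_0)}\leq C\|\phi\|_{H^\beta}+C\,T_0^\theta\bigl(\|v\|_{X^\beta(T_0)}^2+\|\luxor\|_{\mathcal{E}_\beta}\|v\|_{X^\beta(T_0)}+\|\luxor\|_{\mathcal{E}_\beta}^2\bigr),
\]
and an analogous Lipschitz estimate
\[
\|\Gamma(v)-\Gamma(\tilde v)\|_{X^\beta(T_0)}\leq C\,T_0^\theta\bigl(\|v\|_{X^\beta(T_0)}+\|\tilde v\|_{X^\beta(T_0)}+\|\luxor\|_{\mathcal{E}_\beta}\bigr)\|v-\tilde v\|_{X^\beta(T_0)}
\]
on the difference. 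Choosing the ball of radius $R:=2C\|\phi\|_{H^\beta}+1$ and then $T_0>0$ small enough (depending on $R$ and $\|\luxor\|_{\mathcal{E}_\beta}$) makes $\Gamma$ a contraction on this ball, producing the unique solution; continuity at $t=0$ (i.e.\ membership in $\mathcal{C}([0,T_0];H^\beta)$) follows from the standard Strichartz continuity for the free evolution and the Bochner integrability of the Duhamel term.

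The main technical obstacle, and the step that forces the specific choice of $(p,q)$ in the statement, is the selection of intermediate exponents $(p_1,p_2,q_1,q_2,r,\ldots)$ simultaneously compatible with (i) admissibility of the dual Strichartz pair, (ii) the Leibniz rule scaling $\tfrac{1}{q'}=\tfrac{1}{q}+\tfrac{1}{r}$, (iii) a Sobolev embedding $\mathcal{W}^{\beta,q}\hookrightarrow L^r$ valid for $\beta<1$ and $d\leq 4$, and (iv) the extraction of a strictly positive time exponent $\theta>0$ in the H\"older-in-time step, without which no contraction can be run. Everything else is bookkeeping: once these exponents are balanced, the fixed-point argument closes and yields the announced unique solution $v\in X^\beta(T_0)$.
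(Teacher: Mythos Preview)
Your proposal is correct and follows essentially the same route as the paper: set up the Duhamel map, apply Strichartz (Lemma~\ref{lem:strichartz}) with the admissible pair $(p,q)$, estimate each nonlinear term via the fractional Leibniz rule (Lemma~\ref{lem:frac-leibniz}) combined with the Sobolev embedding $\mathcal{W}^{\beta,q}\hookrightarrow L^n$ and H\"older in time, and close by a contraction on a ball of $X^\beta(T_0)$. The paper carries this out explicitly with the choices $n=\tfrac{3d}{d-\beta}$ (so that $\tfrac{1}{q'}=\tfrac{1}{q}+\tfrac{1}{n}$) and $\tfrac{1}{m}=1-\tfrac{d-\beta}{4}>0$ for the time exponent, which is precisely the balancing of constraints (i)--(iv) you describe; the only cosmetic difference is that the paper controls the mixed and $|\luxor|^2$ terms through $\|\luxor\|_{L^\infty_T\mathcal{W}^{\beta,q}}$ and the same Sobolev embedding rather than via $\mathcal{W}^{\beta,\infty}$ or an algebra property, but this is immaterial since $\luxor\in\mathcal{E}_\beta$ covers all these norms.
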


\smallskip

This local well-posedness result will be derived from a standard fixed-point argument. To this end, we introduce the map $\Gamma$ defined by the right-hand side of~\eqref{det}, that is: for all $\luxor \in \mathcal{E}_{\beta}$, $\phi \in H^\beta(\mathbb{R}^d)$, $v\in X^{\beta}(T)$, $T\geq 0$ and $t\in [0,T]$, set
\begin{multline*}
\Gamma_{T, \luxor}(v)_{t}:=S_t(\phi)-\imath\int_0^t S_{t-\tau}(\rho^2|v_{\tau}|^2)\, d\tau-\imath \int_0^t S_{t-\tau}((\rho \overline{v}_{\tau})\cdot \luxor_{\tau})\, d\tau\\
-\imath \int_0^t S_{t-\tau}((\rho v_{\tau})\cdot \overline{\luxor}_{\tau})\, d\tau-\imath\int_0^t S_{t-\tau}(|\luxor_{\tau}|^2)\, d\tau \, .
\end{multline*}

\begin{proposition}\label{control-regular}
In the setting of Theorem~\ref{thm:regular}, the following bounds hold true: there exists $\varepsilon>0$ such that for all $0\leq T\leq 1$, $\phi \in H^\beta(\mathbb{R}^d), (\luxor_1,\luxor_2) \in \mathcal{E}_{\beta}\times \mathcal{E}_{\beta}$ and $v, v_1, v_2 \in X^\beta(T)$, 
\begin{equation}\label{bound1}
\|\Gamma_{T, \luxor_1}(v)\|_{X^\beta(T)}\lesssim \|\phi\|_{H^\beta(\mathbb{R}^d)} +T^{\varepsilon}\Big[\|v\|_{X^\beta(T)}^2+\| \luxor_1\|_{L^\infty_T \mathcal{W}^{\beta,q}}\|v\|_{X^\beta(T)}+\|\luxor_1\|_{L^\infty_T\mathcal{W}^{\beta,q}}^2\Big] \, ,
\end{equation}
and
\begin{align}
&\|\Gamma_{T, \luxor_1}(v_1)-\Gamma_{T, \luxor_2}(v_2)\|_{X^\beta(T)}\nonumber \\
&\lesssim T^{\varepsilon}\Big[\|v_1-v_2\|_{X^\beta(T)}\big\{\|v_1\|_{X^\beta(T)}+\| v_2\|_{X^\beta(T)}\big\}+\|\luxor_1-\luxor_2\|_{L^\infty_T \mathcal{W}^{\be,q}}\| v_1\|_{X^\beta(T)}\nonumber\\
&\hspace{1cm}+\|\luxor_2\|_{L^\infty_T\mathcal{W}^{\be,q}}\|v_1-v_2\|_{X^\beta(T)}+\|\luxor_1-\luxor_2\|_{L^\infty_T \mathcal{W}^{\be,q}}\big\{\|\luxor_1\|_{L^\infty_T\mathcal{W}^{\be,q}}+\|\luxor_2\|_{L^\infty_T\mathcal{W}^{\be,q}}\big\}\Big]\, ,\label{bound2}   
\end{align}
where the proportional constants only depend on $s$ and $\rho$.
\end{proposition}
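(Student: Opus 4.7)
The strategy is to combine the Strichartz estimates from Lemma~\ref{lem:strichartz} with the fractional Leibniz rule from Lemma~\ref{lem:frac-leibniz}, taking the admissible dual pair $(a,b)=(p,q)$. Applied to $\Gamma_{T,\luxor_1}(v)$, this immediately reduces the bound~\eqref{bound1} to controlling the four nonlinearities $\rho^2|v|^2$, $(\rho\bar v)\luxor_1$, $(\rho v)\overline{\luxor_1}$ and $|\luxor_1|^2$ in the space $L^{p'}_T \cw^{\beta,q'}(\R^d)$, where $(p',q')$ are the H\"older conjugates of $(p,q)$. Since $\rho$ is smooth and compactly-supported, it acts as a bounded multiplier on all the Sobolev spaces at stake and can be absorbed into the proportional constant.

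The core bilinear estimate is then obtained as follows. I split $\tfrac{1}{q'}=\tfrac{1}{q}+\tfrac{1}{q_2}$ with $q_2:=\tfrac{3d}{d-\beta}$; the fractional Leibniz rule yields
$$\|fg\|_{\cw^{\beta,q'}}\lesssim \|f\|_{\cw^{\beta,q}}\|g\|_{L^{q_2}}+\|f\|_{L^{q_2}}\|g\|_{\cw^{\beta,q}}.$$
A short computation shows $\tfrac{1}{q}-\tfrac{1}{q_2}=\tfrac{\beta}{2d}$, so the Sobolev embedding $\cw^{\beta,q}(\R^d)\hookrightarrow L^{q_2}(\R^d)$ applies and converts the $L^{q_2}$ factor into a $\cw^{\beta,q}$ one. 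Integrating in time with H\"older, using $v\in L^p_T\cw^{\beta,q}$ and $\luxor_i\in L^\infty_T \cw^{\beta,q}$, I extract a positive time power $T^\varepsilon$, with $\varepsilon=1-\tfrac{3}{p}$, $\varepsilon=1-\tfrac{2}{p}$, or $\varepsilon=1-\tfrac{1}{p}$, depending on whether the product involves two, one, or zero $v$-factors. All three are strictly positive as soon as $p>3$, i.e., $12/(d-\beta)>3$, which is precisely where the assumption $d\leq 4$ (together with $\beta\in (0,1)$) enters the argument.

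For the Lipschitz bound~\eqref{bound2}, I use the elementary telescoping identity
$$f_1 g_1-f_2 g_2=(f_1-f_2)g_1+f_2(g_1-g_2)$$
on each of the four nonlinearities, so that every quadratic difference reduces to a sum of bilinear products of exactly the type treated above, with one factor being a difference (in $X^\beta(T)$ or in $\mathcal{E}_\beta$) and the other being a \enquote{uniform} factor of the same type. Feeding each such product into the same Leibniz--Sobolev--H\"older chain reproduces term by term the right-hand side of~\eqref{bound2}.

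The main technical point is simply to align three constraints simultaneously: admissibility of $(p,q)$ (fixing $\tfrac{2}{p}+\tfrac{d}{q}=\tfrac{d}{2}$), sharpness of the Sobolev embedding $\cw^{\beta,q}\hookrightarrow L^{q_2}$ with the dual exponent $q_2$ coming from Leibniz, and strict positivity of the time gain $\varepsilon$. The choice $p=12/(d-\beta)$, $q=6d/(2d+\beta)$ in the statement is calibrated precisely so that these three constraints are compatible in dimensions $d\leq 4$; no deeper difficulty arises beyond this exponent bookkeeping.
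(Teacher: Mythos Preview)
Your proposal is correct and follows essentially the same route as the paper: Strichartz with the dual pair $(a,b)=(p,q)$, fractional Leibniz with the auxiliary exponent $q_2=\tfrac{3d}{d-\beta}$ (the paper calls it $n$), the Sobolev embedding $\cw^{\beta,q}\hookrightarrow L^{q_2}$, and H\"older in time to extract the powers $T^{1-3/p}$, $T^{1-2/p}$, $T^{1-1/p}$. The telescoping argument for~\eqref{bound2} is likewise exactly what the paper does.
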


Before we turn to the proof of this proposition, let us briefly recall that, once endowed~\eqref{bound1}-\eqref{bound2}, the statement of Theorem~\ref{thm:regular} follows from a standard two-step procedure. Namely, using~\eqref{bound1}, we can first establish that for any $T=T(\phi,\luxor)>0$ small enough, there exists a ball in $X^{\beta}(T)$ that is stable through the application of $\Gamma_{T, \luxor}$. Then, thanks to~\eqref{bound2} (applied with $\luxor_1=\luxor_2=\luxor$), we can show that $\Gamma_{T, \luxor}$ is actually a contraction on this ball (for $T>0$ possibly even smaller), which completes the proof of the assertion.

\smallskip

Note also that the continuity of $\Gamma_{T, \luxor}$ with respect to $\luxor$ (an immediate consequence of~\eqref{bound2}) will be the key ingredient toward item $(ii)$ of Theorem~\ref{resu}.

\smallskip

\begin{proof}[Proof of Proposition~\ref{control-regular}]
Let us set, for any suitable distribution $u$ on $\mathbb{R}^{d+1}$,
$$\cg(u)_t:=-\imath \int_0 ^t S_{t-\tau}(u_{\tau})\, d\tau\, ,$$
which allows to recast $\Gamma_{T,\luxor}$ as
$$\Gamma_{T,\luxor}(v)=S(\phi)+\cg(\rho^2 |v|^2)+\cg(\rho \overline{v} \cdot\luxor)+\cg(\rho v \cdot \overline{\luxor})+\cg(|\luxor|^2) \, .$$
Let us now bound each of the four above terms separately.

\

\noindent
\textbf{Bound on $S(\phi)$:}
Since $(\infty,2)$ and $(p,q)$ are both Schr\"{o}dinger admissible pairs, we can apply Lemma~\ref{lem:strichartz} to assert that
\begin{equation}\label{estima-1}
\|S(\phi)\|_{X^\beta(T)}\lesssim \|\phi\|_{H^\beta} \, .
\end{equation}

\

\noindent
\textbf{Bound on $\cg(\rho^2 |v|^2)$:}
By Lemma~\ref{lem:strichartz}, we can first assert that
$$\|\cg(\rho^2|v|^2)\|_{X^\beta(T)}\lesssim \|\rho^2|v|^2\|_{L^{p'}_T\mathcal{W}^{\beta,q'}}\, .$$
Let us now introduce the additional parameter $n:=\frac{3d}{d-\beta}> 1$, in such a way that
$$\frac{1}{q'}=\frac{1}{q}+\frac{1}{n} \, .$$
Using the fractional Leibniz rule given by Lemma~\ref{lem:frac-leibniz}, we get that for all $t\geq 0$,
\begin{eqnarray*}
\|\rho^2|v|^2(t,.)\|_{\mathcal{W}^{\beta,q'}}\lesssim \|\rho v(t,.)\|_{\mathcal{W}^{\beta,q}}\|\rho v(t,.)\|_{L^n} \, .
\end{eqnarray*}
It is easy to check that $\beta \geq d(\frac{1}{q}-\frac{1}{n})$, and accordingly we can rely on the Sobolev embedding 
\begin{equation}\label{sobol-emb}
\mathcal{W}^{\beta,q}(\mathbb{R}^d)\hookrightarrow L^n(\mathbb{R}^d)
\end{equation}
to derive that
$$\|\rho^2|v|^2(t,.)\|_{\mathcal{W}^{\beta,q'}}\lesssim \|\rho v(t,.)\|_{\mathcal{W}^{\beta,q}}^2\, .$$
Consider the parameter $m$ defined through the relation
$$\frac{1}{p'}=\frac{2}{p}+\frac{1}{m} \, .$$ 
Since $1\leq d\leq 4$ and $\beta \in (0,1)$, it can actually be verified that $\frac{1}{m}=1-\frac{d-\beta}{4} >0$. Then, by H\"{o}lder inequality, we have
\begin{equation*}
\|\rho^2|v|^2\|_{L^{p'}_T\mathcal{W}^{\beta,q'}}\lesssim T^{\frac{1}{m}}\|v\|_{L^p_T\mathcal{W}^{\beta,q}}^2 \lesssim T^{1-\frac{d-\beta}{4}}\|v\|_{X^\beta(T)}^2 \, ,
\end{equation*}
and we have thus shown that 
\begin{equation}\label{estima-2}
\|\cg(\rho^2|v|^2)\|_{X^\beta(T)}\lesssim T^{1-\frac{d-\beta}{4}}\|v\|_{X^\beta(T)}^2\, .
\end{equation}

\

\noindent
\textbf{Bound on $\cg(\rho \overline{v} \cdot\luxor)$, $\cg(\rho v \cdot \overline{\luxor})$:}
Just as above, we can first apply Lemma~\ref{lem:strichartz} to get that
\begin{equation*}
\|\cg(\rho \overline{v} \cdot\luxor)\|_{X^\beta(T)}+\|\cg(\rho v \cdot \overline{\luxor})\|_{X^\beta(T)}\lesssim   \|\rho \overline{v}\cdot \luxor\|_{ L^{p'}_T \mathcal{W}^{\beta,q'}}.
\end{equation*}
Thanks to Lemma~\ref{lem:frac-leibniz}, it holds, for all $t\geq 0,$
\begin{eqnarray*}
 \|\rho \overline{v}\cdot \luxor(t,.)\|_{ \mathcal{W}^{\beta,q'}}  &\lesssim &\|\rho v(t,.)\|_{\mathcal{W}^{\beta,q}}\|\luxor(t,.)\|_{L^n}+\|\luxor(t,.)\|_{\mathcal{W}^{\beta,q}}\|\rho v(t,.)\|_{L^n}\\
&\lesssim&\|\luxor(t,.)\|_{\mathcal{W}^{\beta,q}}\|\rho v(t,.)\|_{\mathcal{W}^{\beta,q}}\, ,
\end{eqnarray*}
where we have again used the Sobolev embedding~\eqref{sobol-emb}.

\smallskip

Then, by H\"{o}lder inequality, we deduce
\begin{align*}
\|\rho \overline{v}\cdot \luxor\|_{ L^{p'}_T \mathcal{W}^{\beta,q'}} &\lesssim T^{\frac{1}{p}+\frac{1}{m}}\|\luxor\|_{L^\infty_T\mathcal{W}^{\beta,q}}\|v\|_{L^p_T\mathcal{W}^{\beta,q}} \\
& \lesssim T^{\frac{1}{p}+\frac{1}{m}}\|\luxor\|_{L^\infty_T \mathcal{W}^{\beta,q}}\|v\|_{X^\beta(T)}\, ,
\end{align*}
and we have thus established that
\begin{equation}\label{estima-3}
\|\cg(\rho \overline{v} \cdot\luxor)\|_{X^\beta(T)}+\|\cg(\rho v \cdot \overline{\luxor})\|_{X^\beta(T)}\lesssim T^{\frac{1}{p}+\frac{1}{m}}\|\luxor\|_{L^\infty_T\mathcal{W}^{\beta,q}}\|v\|_{X^\beta(T)} \, .
\end{equation}

\

\noindent
\textbf{Bound on $\cg(|\luxor|^2)$:}
By Lemma~\ref{lem:strichartz}, 
$$\|\cg(|\luxor|^2)\|_{X^\beta(T)}\lesssim \||\luxor|^2\|_{L^{p'}_T\mathcal{W}^{\be,q'}}\, .$$
Using Lemma~\ref{lem:frac-leibniz} and the Sobolev embedding~\eqref{sobol-emb}, we get that for every $t\geq 0,$
$$
\||\luxor|^2(t,.)\|_{\mathcal{W}^{\be,q'}}\lesssim \|\luxor(t,.)\|_{\mathcal{W}^{\be,q}}\|\luxor(t,.)\|_{L^n}\lesssim\|\luxor(t,.)\|_{\mathcal{W}^{\be,q}}^2\, .
$$
Then
$$
\||\luxor|^2\|_{L^{p'}_T\mathcal{W}^{\be,q'}}\lesssim T^{\frac{1}{p'}}\|\luxor\|_{L^\infty_T\mathcal{W}^{\be,q}}^2 \, ,
$$
and finally
\begin{equation}\label{estima-4}
\|\cg(|\luxor|^2)\|_{X^\beta(T)}\lesssim T^{\frac{1}{p'}}\|\luxor\|_{L^\infty_T\mathcal{W}^{\be,q}}^2 \, .
\end{equation}

\

The combination of estimates~\eqref{estima-1},~\eqref{estima-2},~\eqref{estima-3} and~\eqref{estima-4} entails the desired bound~\eqref{bound1}. 

\

It is then easy to see that~\eqref{bound2} can be derived from similar arguments: for instance,
\begin{equation*}
\|\cg(\rho^2(|v_1|^2-|v_2|^2))\|_{X^\beta(T)}\lesssim \|\rho^2(|v_1|^2-|v_2|^2)\|_{L^{p'}_T\mathcal{W}^{\be,q'}}\lesssim \||v_1|^2-|v_2|^2\|_{L^{p'}_T\mathcal{W}^{\be,q'}}\, .
\end{equation*}
Combining again Lemma~\ref{lem:frac-leibniz} and embedding~\eqref{sobol-emb}, we obtain, for every $t\geq 0$,
\begin{eqnarray*}
\|(|v_1|^2-|v_2|^2)(t,.)\|_{\mathcal{W}^{\be,q'}}&\lesssim &\|(v_1-v_2)(t,.)\|_{\mathcal{W}^{\be,q}}\{\|v_1(t,.)\|_{L^n}+\|v_2(t,.)\|_{L^n}\}\nonumber\\
& &+\|(v_1-v_2)(t,.)\|_{L^n}\{\|v_1(t,.)\|_{\mathcal{W}^{\be,q}}+\|v_2(t,.)\|_{\mathcal{W}^{\be,q}}\}\nonumber\\
& \lesssim&\|(v_1-v_2)(t,.)\|_{\mathcal{W}^{\be,q}}\{\|v_1(t,.)\|_{\mathcal{W}^{\be,q}}+\|v_2(t,.)\|_{\mathcal{W}^{\be,q}}\}\nonumber\, ,
\end{eqnarray*}
and as a result
$$\||v_1|^2-|v_2|^2\|_{L^{p'}_T\mathcal{W}^{\be,q'}}\lesssim T^{\frac{1}{m}}\|v_1-v_2\|_{L^p_T\mathcal{W}^{\be,q}}\{\|v_1\|_{L^p_T\mathcal{W}^{\be,q}}+\|v_2\|_{L^p_T\mathcal{W}^{\be,q}} \} \, .$$
\end{proof}

\subsection{Proof of Theorem~\ref{resu}}\label{subsec:proof-main-theo-regu}

\

\smallskip

At this point, the statement of Theorem~\ref{resu} (item $(i)$) is of course a mere combination of the construction of $\rho\<Psi>$ as an element in $\ce_\be$ (Proposition~\ref{sto}) with the well-posedness result of Theorem~\ref{thm:regular}. In brief, it suffices to apply Theorem~\ref{thm:regular} (in an almost sure way) to $\luxor:=\rho\<Psi>$.

\smallskip

As for the convergence property in item $(ii)$, it can easily be deduced from the continuity of $\Gamma_{T,\luxor}$ with respect to $\luxor$ (along~\eqref{bound2}) and the almost sure convergence of $\chi \<Psi>_n$ to $\chi \<Psi>$. Additional details about this elementary procedure can be found in the proof of~\cite[Theorem 1.7]{deya-wave}.

\

\section{Deterministic analysis of the equation under condition~\texorpdfstring{\eqref{cond-irreg-case}}{H2'}}\label{sec:irreg-case-det}

It remains us to deal with the wellposedness issue in the rough case, that is to present the proof of Theorem~\ref{resu1}. Therefore, we assume in this section that condition~\eqref{cond-irreg-case} on the Hurst indexes is satisfied. We recall that in this rough situation, the equation is understood in the sense of Definition~\ref{defi:sol}, that is as
\begin{multline}\label{ccl-sto}
v_t=S_t(\phi)-\imath\int_0^t S_{t-\tau}(\rho^2 |v_{\tau}|^2)\, d\tau-\imath\int_0^t S_{t-\tau}((\rho \overline{v}_{\tau})\cdot(\rho \<Psi>_{\tau}))\, d\tau\\
-\imath\int_0^t S_{t-\tau}((\rho v_{\tau})\cdot(\overline{\rho \<Psi>_{\tau}}))\, d\tau-\imath\int_0^t S_{t-\tau}(\rho^2 \<Psi2>_{\tau})\, d\tau\, , 
\end{multline}
where the processes $\rho \<Psi>$ and $\rho^2 \<Psi2>$ are defined through Proposition~\ref{sto} and Proposition~\ref{sto1}.

\smallskip

In order to handle~\eqref{ccl-sto}, we intend to follow the same deterministic approach as in Section~\ref{sec:regular}. In other words, we will henceforth consider the pair $(\rho \<Psi>,\rho^2 \<Psi2>)$ as a given element in the space
\begin{equation}\label{defi-space-e-al}
\mathcal{R}_\al:=L^\infty\big([0,T];\mathcal{W}^{-\alpha,\infty}(\mathbb{R}^d)\big)\times L^\infty\big([0,T];H^{-2\alpha}(\mathbb{R}^d)\big) \, ,
\end{equation}
for some $0<\al<1$ (provided by Propositions~\ref{sto} and~\ref{sto1}), and then try to solve the more general deterministic equation: for $(\luxo,\cherry) \in \mathcal{R}_{\alpha}$,
\begin{multline}\label{ccl}
v_t=S_t(\phi)-\imath\int_0^t S_{t-\tau}(\rho^2 |v_{\tau}|^2)\, d\tau-\imath\int_0^t S_{t-\tau}((\rho \overline{v}_{\tau})\cdot \luxo_{\tau})\, d\tau\\
-\imath\int_0^t S_{t-\tau}((\rho v_{\tau})\cdot  \overline{\luxo_\tau})\, d\tau-\imath\int_0^t S_{t-\tau}(\cherry_{\tau})\, d\tau\, .
\end{multline}

As we have already highlighted it in Section~\ref{subsec:wellposed-rough-intro}, the whole specificity of the situation (in comparison to Section~\ref{sec:regular}) lies in the irregularity of $\luxo_\tau$ and $\cherry_\tau$, which can only be treated as negative-order distributions (note indeed that $\al >0$ in~\eqref{defi-space-e-al}). The technical ingredients towards a fixed-point argument need to be revised accordingly: this will be the purpose of the subsequent Sections~\ref{subsec:point-mult-inter}-\ref{subsec:commutator}, which lay the ground for our main wellposedness result, namely Theorem~\ref{thm:noregular}.

\subsection{Pointwise multiplication and interpolation}\label{subsec:point-mult-inter}

\

\smallskip

In view of the above considerations, our only possibility to handle the product $(\rho v_{\tau})\cdot(\luxo_{\tau})$ in~\eqref{ccl} will be to rely on the following general multiplication property in Sobolev spaces (see e.g.~\cite[Section 4.4.3]{prod} for a proof of this result):
\begin{lemma}\label{lem:product}
Fix $d\geq 1$. Let $\al,\be >0$ and $1 \leq p,p_1,p_2\leq \infty$ be such that
$$\frac{1}{p}= \frac{1}{p_1}+\frac{1}{p_2} \quad \text{and} \quad 0<\al<\be<\frac{d}{p_2}  \, .$$
If $f\in \cw^{-\al,p_1}(\R^d)$ and $g\in \cw^{\be,p_2}(\R^d)$, then $f\cdot g\in \cw^{-\al,p}(\R^d)$ and
$$
\| f\cdot g\|_{\cw^{-\al,p}} \lesssim \|f\|_{\cw^{-\al,p_1}} \| g\|_{\cw^{\be,p_2}} \ .$$

\end{lemma}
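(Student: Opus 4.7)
The plan is to combine a duality argument with Bony's paraproduct decomposition, thereby reducing the negative-order estimate to a positive-order multiplier inequality.

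By the duality $(\mathcal{W}^{\alpha,p'})^\ast = \mathcal{W}^{-\alpha,p}$ (with $p'$ the Hölder conjugate of $p$) and Hölder's inequality applied to $\langle f g,\phi\rangle = \langle f, g\phi\rangle$, one has
\begin{equation*}
\|f g\|_{\mathcal{W}^{-\alpha,p}} \sim \sup_{\|\phi\|_{\mathcal{W}^{\alpha,p'}} \leq 1} |\langle f,\, g\phi \rangle| \leq \|f\|_{\mathcal{W}^{-\alpha,p_1}} \sup_{\|\phi\|_{\mathcal{W}^{\alpha,p'}}\leq 1} \|g\phi\|_{\mathcal{W}^{\alpha,p_1'}},
\end{equation*}
where $\frac{1}{p_1'} = 1 - \frac{1}{p_1} = \frac{1}{p'} + \frac{1}{p_2}$. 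It is therefore enough to establish the positive-order estimate
\begin{equation*}
\|g\phi\|_{\mathcal{W}^{\alpha,p_1'}} \lesssim \|g\|_{\mathcal{W}^{\beta,p_2}} \|\phi\|_{\mathcal{W}^{\alpha,p'}}.
\end{equation*}

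To prove this last inequality I would introduce a Littlewood--Paley decomposition and appeal to Bony's formula $g\phi = T_g \phi + T_\phi g + R(g,\phi)$. The paraproduct $T_g \phi$ inherits its high-frequency structure from $\phi$, so a block-by-block Hölder estimate using $S_{j-1}g \in L^{p_2}$ (which holds via the embedding $\mathcal{W}^{\beta,p_2} \hookrightarrow L^{p_2}$ since $\beta > 0$) together with $\Delta_j \phi \in L^{p'}$ yields a control at regularity $\alpha$ in $L^{p_1'}$. The symmetric paraproduct $T_\phi g$ carries its high frequencies on $g$, at regularity $\beta > \alpha$; the excess $\beta-\alpha$ derivatives are absorbed into $g$, and the low-frequency factor $S_{j-1}\phi$ is controlled by a Sobolev embedding of $\mathcal{W}^{\alpha,p'}$ into a suitable $L^r$. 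The hypothesis $\beta < d/p_2$ is used precisely to keep all intermediate Sobolev embeddings (and their duals) strict, away from the $L^\infty$ endpoint.

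The genuine obstacle is the resonant part $R(g,\phi) = \sum_{|j-k|\leq 1} \Delta_j g \cdot \Delta_k \phi$, where the two Littlewood--Paley blocks sit at the same dyadic scale and no Fourier-support cancellation helps the geometric summation over $j$. Here one must exploit that the combined regularity $\alpha + \beta$ is strictly positive in order to turn the formal sum of scales into a genuine $\mathcal{W}^{\alpha,p_1'}$-bound: the strict inequality $\alpha < \beta$ provides the required positive surplus for the $\ell^2$-summation, while $\beta < d/p_2$ again ensures the relevant Sobolev embeddings on both factors. Combining the three paraproduct contributions and reverting through the duality of the first step then yields the claimed bound.
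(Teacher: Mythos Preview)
The paper does not give its own proof of this lemma; it simply cites \cite[Section 4.4.3]{prod} (Runst--Sickel). Your duality-plus-paraproduct sketch is precisely the standard route to such negative-order product estimates, and is in fact the machinery underlying the cited reference, so in substance you are reconstructing the textbook proof.

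Two small remarks. First, the duality $(\mathcal{W}^{\alpha,p'})^\ast \simeq \mathcal{W}^{-\alpha,p}$ requires $1<p<\infty$, so your reduction does not cover the endpoint $p=1$ that the lemma formally allows; this is harmless for the paper's application (where $p=2$), but you should flag it. Second, your explanation of where the hypothesis $\beta<d/p_2$ enters is vague: the paraproduct pieces $T_g\phi$, $T_\phi g$, and $R(g,\phi)$ as you describe them are all controlled using only $0<\alpha<\beta$ and simple H\"older pairings, with no Sobolev embedding that would invoke $\beta<d/p_2$. That upper bound is present in the Runst--Sickel statement essentially to delimit the regime in which their particular formulation is stated (and to stay below the $L^\infty$ threshold where the estimate takes a different, simpler form); it is not a genuine obstruction in your argument, and you should not pretend it is doing work that it is not.
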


Let us also label the following classical interpolation result for further reference:

\begin{lemma}\label{lem:interpol}
Fix $d\geq 1$. Let $s,s_1,s_2\in \R$ and $1\leq p,p_1,p_2<\infty$ be such that, for some $\theta \in (0,1)$,
$$s=\theta s_1+(1-\theta)s_2 \quad \text{and} \quad \frac{1}{p}=\frac{\theta}{p_1}+\frac{1-\theta}{p_2} \, .$$
Then for every $v\in \cw^{s_1,p_1}(\R^d)\cap \cw^{s_2,p_2}(\R^d)$, it holds that $v\in \cw^{s,p}(\R^d)$ and
$$\|v\|_{\cw^{s,p}}\leq \|v\|_{\cw^{s_1,p_1}}^\theta \|v\|_{\cw^{s_2,p_2}}^{1-\theta}\, .$$
\end{lemma}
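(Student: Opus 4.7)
This is a classical log-convexity estimate for Bessel potential spaces, and the most economical route is to invoke the identification
$$\bigl[\cw^{s_2,p_2}(\R^d),\,\cw^{s_1,p_1}(\R^d)\bigr]_{\theta}=\cw^{s,p}(\R^d),$$
established e.g. in Bergh--L\"ofstr\"om, \emph{Interpolation Spaces}, Chapter~6, together with the intrinsic bound $\|v\|_{[X_0,X_1]_\theta}\leq \|v\|_{X_0}^{1-\theta}\|v\|_{X_1}^\theta$ built into the complex interpolation functor. Given the weights $s=\theta s_1+(1-\theta)s_2$ and $1/p=\theta/p_1+(1-\theta)/p_2$ in the hypothesis, this directly outputs the desired inequality.

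If a self-contained proof were required, I would normalize so that $\|v\|_{\cw^{s_j,p_j}}=1$ for $j=1,2$ and apply Stein's analytic interpolation theorem to the family
$$T(z):=(\id-\Delta)^{\sigma(z)/2},\qquad \sigma(z):=z(s_1-s)+(1-z)(s_2-s),$$
on the strip $\{0\leq \Re z\leq 1\}$, acting on the fixed function $f:=(\id-\Delta)^{s/2}v$. The choice of $\sigma$ ensures $\sigma(\theta)=0$ by the hypothesis $s=\theta s_1+(1-\theta)s_2$; in particular $T(\theta)f=f$, whose $L^p$-norm is precisely $\|v\|_{\cw^{s,p}}$. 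On the two boundary lines $\Re z=1$ and $\Re z=0$, $T(z)$ collapses to $(\id-\Delta)^{(s_1-s)/2}$, resp.\ $(\id-\Delta)^{(s_2-s)/2}$, composed with an imaginary power $(\id-\Delta)^{i\tau(s_1-s_2)/2}$. This reduces the two endpoint bounds to $\|v\|_{\cw^{s_1,p_1}}=\|v\|_{\cw^{s_2,p_2}}=1$, and the three-lines lemma applied to $z\mapsto \langle T(z)f,g_z\rangle$ against duality-maximizing testers $g_z$ of unit $L^{p'}$-norm then yields $\|T(\theta)f\|_{L^p}\leq 1$, which is the claimed inequality.

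\textbf{Main technical point.} The only ingredient beyond routine bookkeeping is the uniform $L^q$-boundedness ($1<q<\infty$) of the imaginary powers $(\id-\Delta)^{i\tau}$ together with at-most-polynomial growth of their operator norms in $|\tau|$. This follows from the Mih\-lin--H\"ormander multiplier theorem applied to the symbol $(1+|\xi|^2)^{i\tau}$, whose derivatives satisfy $|\partial^\al m(\xi)|\lesssim (1+|\tau|)^{|\al|}|\xi|^{-|\al|}$. Such subexponential control in $|\tau|$ is precisely the admissibility hypothesis required to run Stein's three-lines argument for the family $\{T(z)\}$, and is the one substantive ingredient of the proof.
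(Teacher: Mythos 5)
The paper does not prove this lemma at all: it is recorded as ``the following classical interpolation result for further reference'' and used as a black box, so there is no in-paper argument to compare against. Your proposal is the standard proof and is sound: the identification $[\cw^{s_2,p_2},\cw^{s_1,p_1}]_{\theta}=\cw^{s,p}$ from complex interpolation, or equivalently the hands-on Stein three-lines argument with the analytic family $(\id-\Delta)^{\sigma(z)/2}$, and you correctly isolate the one substantive ingredient, namely the $L^q$-boundedness of the imaginary powers $(\id-\Delta)^{i\tau}$ with polynomial growth in $|\tau|$ via Mikhlin--H\"ormander. The only caveat is quantitative: both routes deliver the inequality with a multiplicative constant (coming from the norm equivalence in the retract identification, resp.\ from the Poisson average of the polynomially growing boundary bounds in the three-lines lemma), whereas the lemma as stated has constant $1$. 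This discrepancy is harmless for every use made of the lemma in the paper, since it is always absorbed into a $\lesssim$; and in the one genuinely ``clean'' case $p=p_1=p_2=2$ the constant-$1$ bound follows directly from H\"older's inequality applied to $\int (1+|\xi|^2)^{s}|\widehat{v}(\xi)|^2\,d\xi$ on the Fourier side.
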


\smallskip

\subsection{A local regularization property of the Schr\"{o}dinger group $S$}\label{subsec:local-smoothing}

\

\smallskip

It is a well-known fact that the classical Strichartz inequalities for the Schr{\"o}dinger group (summed up in Lemma~\ref{lem:strichartz}) do not offer any regularization effect, as can be seen from the constant derivative parameter $s$ in~\eqref{strichartz}. This phenomenon naturally becomes a fundamental obstacle in our rough setting, where, for stability reasons, the distribution $(\rho v_{\tau})\cdot(\luxo_{\tau})$ in~\eqref{ccl} is expected to turn into a function through the action of $S$. 

\smallskip

A possible way to reach such a regularization property is to let \emph{local} Sobolev topologies come into picture, through the consideration of the spaces $H^s_\rho(\R^d)$ defined by~\eqref{defi:h-rho}. Our main technical result in this direction can be stated as follows: 

\begin{lemma}\label{lem:loc-regu}
Fix $d\geq 1$. Let $\rho:\R^d \to \R$ be of the form~\eqref{form-rho}, $0\leq \al,\kappa \leq\frac{1}{2}$ and $0\leq T\leq 1$. Assume that $\phi \in H^{-\alpha}(\mathbb{R}^d)$, $F \in L^1([0,T];H^{-\alpha}(\mathbb{R}^d))$, and consider the solution $u$ of the following inhomogeneous Schr\"{o}dinger equation on $\mathbb{R}^d$
\begin{equation*}
\left\{
\begin{array}{l}
\imath \partial_t u(t,x)-\Delta u(t,x)= F(t,x) \, , \quad  t\in [0,T] \, , \, x\in \R^d \, ,\\
u_0=\phi \, .
\end{array}
\right.
\end{equation*}
Then it holds that 
\begin{equation}\label{ineq-loc-reg}
\|u\|_{L^{\frac{1}{\kappa}}_T H^{-\al+\ka}_\rho}\lesssim \|\phi\|_{H^{-\alpha}(\mathbb{R}^d)}+\|F\|_{L^1_T H^{-\alpha}}\, ,
\end{equation}
where the proportional constant only depends on $\rho$, $\al$ and $\ka$.
\end{lemma}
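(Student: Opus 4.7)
The plan is to reduce the full inhomogeneous statement to a homogeneous bound, then prove the latter by interpolation between two endpoints, the hard one being a Constantin--Saut type local smoothing estimate.

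First, by Duhamel's formula $u(t)=S_t\phi-\imath\int_0^t S_{t-\tau}F_\tau\, d\tau$ and Minkowski's integral inequality applied to the $L^{1/\kappa}_T H^{-\alpha+\kappa}_\rho$ norm, estimate~\eqref{ineq-loc-reg} follows from the purely homogeneous estimate
\begin{equation}\label{eq:hom-reduc}
\|S_\cdot\, \phi\|_{L^{1/\kappa}_T H^{-\alpha+\kappa}_\rho}\lesssim \|\phi\|_{H^{-\alpha}(\R^d)}\, ,
\end{equation}
applied inside the time integral to $S_{\cdot-\tau}(F_\tau \mathbf{1}_{[\tau,T]})$ and using $T\leq 1$ to absorb constants. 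So the real task is \eqref{eq:hom-reduc}.

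Second, I would obtain \eqref{eq:hom-reduc} by interpolating in $\kappa$ between two endpoints. At $\kappa=0$, the estimate is immediate: $S_t$ is an isometry on $H^{-\alpha}(\R^d)$, and pointwise multiplication by $\rho$ is bounded from $H^{-\alpha}(\R^d)$ into itself for $0\leq\alpha\leq 1/2$ (since $\widehat{\rho}$ decays rapidly), so $\|S_t\phi\|_{H^{-\alpha}_\rho}\lesssim\|\phi\|_{H^{-\alpha}}$ uniformly in $t$. At $\kappa=1/2$, the estimate to prove is the genuine local smoothing
\begin{equation}\label{eq:LS}
\Big(\int_0^T \|S_t\phi\|^2_{H^{-\alpha+\frac12}_\rho}\, dt\Big)^{1/2}\lesssim \|\phi\|_{H^{-\alpha}(\R^d)}\, .
\end{equation}

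Third, and this is the heart of the argument, I would establish \eqref{eq:LS} following the strategy of Constantin--Saut, but crucially exploiting the product assumption~\eqref{form-rho} on $\rho$ in order to reduce the $d$-dimensional statement to iterated one-dimensional Kato-type local smoothing bounds. The idea is to write $|\rho(x)|^2=\prod_{i=1}^d|\rho_i(x_i)|^2$ and use the tensor factorization $S_t(x)=\prod_{i=1}^d S^{(1)}_t(x_i)$ of the Schrödinger kernel to integrate one variable at a time, invoking in each direction the classical one-dimensional smoothing estimate
\begin{equation*}
\int_0^T\!\!\int_\R |\rho_i(x_i)|^2\, \big|D_{x_i}^{1/2} S^{(1)}_t\psi(x_i)\big|^2\, dx_i\, dt\lesssim \|\psi\|_{L^2_{x_i}}^2\, ,
\end{equation*}
with $\psi$ treated as an $L^2$-valued function of the remaining variables through Minkowski. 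The inhomogeneous Sobolev weight $(\id-\Delta)^{(-\alpha+1/2)/2}$ is then absorbed via a Fourier decomposition in each coordinate, using the fact that $(\id-\Delta)^{s/2}$ is comparable to a product of its one-dimensional analogues modulo lower-order commutators (controlled by Lemma~\ref{lem:commut}), and using Lemma~\ref{lem:interpol} to match the Sobolev scales when $\alpha\neq 0$.

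Finally, the two endpoint bounds $L^\infty_T H^{-\alpha}_\rho$ and $L^2_T H^{-\alpha+1/2}_\rho$ form a compatible interpolation scale in both the time integrability and the Sobolev regularity, so standard real interpolation yields $L^{1/\kappa}_T H^{-\alpha+\kappa}_\rho$ control for every intermediate $\kappa\in[0,1/2]$, completing the proof. The main obstacle is unquestionably \eqref{eq:LS}: the $1/2$-derivative gain is the maximum smoothing one can extract from the Schrödinger group, and obtaining it in a form compatible with the Sobolev scale $H^{-\alpha+1/2}_\rho$ is precisely what forces the product structure~\eqref{form-rho} on the cut-off, and explains why no analogous statement can be expected on a torus or with a generic compactly supported weight.
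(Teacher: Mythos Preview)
Your overall strategy---interpolate between the trivial $\kappa=0$ endpoint and the Constantin--Saut $\kappa=\tfrac12$ endpoint---is exactly the paper's. The paper, however, implements the interpolation differently, and this difference matters.

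The paper does \emph{not} invoke abstract real interpolation between $L^\infty_T H^{-\alpha}_\rho$ and $L^2_T H^{-\alpha+1/2}_\rho$. Instead it works pointwise in time: for each fixed $t$ it uses the commutator estimate of Lemma~\ref{lem:commut} to pass from the local norm $\|u(t)\|_{H^{-\alpha+\kappa}_\rho}$ to the standard norm $\|\rho\,u(t)\|_{H^{-\alpha+\kappa}}$, then applies the elementary Sobolev interpolation inequality (Lemma~\ref{lem:interpol}) to get
\[
\|\rho\,u(t)\|_{H^{-\alpha+\kappa}}\lesssim \|\rho\,u(t)\|_{H^{-\alpha}}^{1-2\kappa}\|\rho\,u(t)\|_{H^{-\alpha+1/2}}^{2\kappa},
\]
converts back via Lemma~\ref{lem:commut}, and finally applies H\"older in time. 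This is completely elementary and never requires identifying an abstract interpolation space. Your appeal to ``standard real interpolation'' is the weak point: the spaces $H^s_\rho$ are only seminormed, and even granting that $(H^{s_0}_\rho,H^{s_1}_\rho)_\theta$ behaves well, the identity $(L^\infty_T X_0,L^2_T X_1)_{\theta,q}=L^{1/\kappa}_T X_\theta$ is not automatic for general Banach couples---one typically needs a Lions--Peetre type argument, and you have not supplied it. The paper's route via commutators plus pointwise interpolation sidesteps this issue entirely.

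Two smaller remarks. First, the paper does not reduce to the homogeneous case; it uses the inhomogeneous Constantin--Saut bound (Proposition~\ref{prop:constantin-saut}) and the inhomogeneous Strichartz estimate directly on $u$. Your Duhamel/Minkowski reduction is fine but unnecessary. Second, the paper treats Proposition~\ref{prop:constantin-saut} as a black box from~\cite{constantin-saut}; your sketch of its proof is superfluous here, and the claim that ``$(\id-\Delta)^{s/2}$ is comparable to a product of its one-dimensional analogues modulo lower-order commutators controlled by Lemma~\ref{lem:commut}'' is not correct---Lemma~\ref{lem:commut} concerns commutators with multiplication by $\rho$, not the comparison of $(\id-\Delta)^{s/2}$ with $\prod_i(\id-\partial_{x_i}^2)^{s/2}$.
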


\smallskip

The above property can in fact be seen as a slight extension of the result of~\cite[Theorem 3.1]{constantin-saut}. For the sake of clarity, we have postponed the proof of the lemma to Section~\ref{subsec:proof-loc-reg}.

\smallskip

\subsection{A commutator estimate}\label{subsec:commutator}

\

\smallskip

Keeping our objective in mind (that is, to settle a fixed-point argument for~\eqref{ccl}), the previous estimate~\eqref{ineq-loc-reg} clearly lacks some stability: the left-hand side is indeed based on the consideration of a local Sobolev norm (in $H^{-\al+\ka}_\rho$), while the right-hand side appeals to a standard Sobolev space ($H^{-\alpha}$).

\smallskip

Our strategy to overcome this problem will consist in using the presence of the cut-off function~$\rho$ \emph{within the model~\eqref{ccl}} (through $\rho v$), which somehow allows us to turn global Sobolev norms into local ones. To implement this idea, an additional commutator-type estimate will be required:

\begin{lemma}\label{lem:commut}
For every $s >0$ and for all test functions $\rho, g:\R^d \to \R$, it holds that
\begin{equation}\label{improv-boun}
 \|(\emph{\id}-\Delta)^{\frac{s}{2}}(\rho\cdot g)-\rho \cdot (\emph{\id}-\Delta)^{\frac{s}{2}}(g)\|_{L^2(\mathbb{R}^d)}\lesssim \|g\|_{H^{s-1}(\mathbb{R}^d)} \, ,
\end{equation}
where the proportional constant only depends on $\rho$ and $s$.

\smallskip

As a consequence, for every test function $\rho:\R^d \to \R$ and for every $g\in H^s_\rho(\R^d) \cap H^{s-1}(\R^d)$, it holds that
\begin{equation}\label{commut-1}
\|\rho \cdot g\|_{H^s} \lesssim \|g\|_{H^s_\rho}+\|g\|_{H^{s-1}} 
\end{equation}
and 
\begin{equation}\label{commut-2}
\|g\|_{H^s_\rho} \lesssim \|\rho \cdot g\|_{H^s}+\|g\|_{H^{s-1}} \, ,
\end{equation}
for some proportional constant depending only on $\rho$ and $s$.
\end{lemma}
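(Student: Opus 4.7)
The plan is to prove the commutator estimate \eqref{improv-boun} first, and to derive \eqref{commut-1}--\eqref{commut-2} from it via the triangle inequality. Indeed, writing $\mathcal{C}_\rho^{(s)}(g):=(\id-\Delta)^{\frac{s}{2}}(\rho\cdot g)-\rho\cdot (\id-\Delta)^{\frac{s}{2}}(g)$, one has both
\begin{equation*}
\|\rho\cdot g\|_{H^s}\leq \|g\|_{H^s_\rho}+\|\mathcal{C}_\rho^{(s)}(g)\|_{L^2} \quad \text{and}\quad \|g\|_{H^s_\rho}\leq \|\rho\cdot g\|_{H^s}+\|\mathcal{C}_\rho^{(s)}(g)\|_{L^2}\, ,
\end{equation*}
so that \eqref{improv-boun} immediately yields the two corollaries.

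To prove \eqref{improv-boun}, I would work directly on the Fourier side. Setting $m(\xi):=\{1+|\xi|^2\}^{s/2}$, one has the representation
\begin{equation*}
\cf\big(\mathcal{C}_\rho^{(s)}(g)\big)(\xi)=\frac{1}{(2\pi)^d}\int_{\R^d}\hat\rho(\xi-\eta)\,[m(\xi)-m(\eta)]\,\hat g(\eta)\, d\eta\, .
\end{equation*}
The crucial symbol estimate, valid for all $s>0$, reads
\begin{equation*}
|m(\xi)-m(\eta)|\lesssim |\xi-\eta|\big(\{1+|\xi|^2\}^{(s-1)/2}+\{1+|\eta|^2\}^{(s-1)/2}\big)\, ,
\end{equation*}
and follows from the mean value theorem applied to $m$ together with the elementary pointwise bound $|\nabla m(\zeta)|\lesssim \{1+|\zeta|^2\}^{(s-1)/2}$, distinguishing if necessary the regimes $|\xi-\eta|\leq \frac12 (1+\min(|\xi|,|\eta|))$ and $|\xi-\eta|>\frac12(1+\min(|\xi|,|\eta|))$. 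This symbol estimate is the main technical ingredient of the whole argument.

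The final step is then routine. Setting $h(\eta):=\{1+|\eta|^2\}^{(s-1)/2}\hat g(\eta)$, so that $\|h\|_{L^2}\sim \|g\|_{H^{s-1}}$, and invoking Peetre's inequality $\{1+|\xi|^2\}^{(s-1)/2}\lesssim \{1+|\xi-\eta|^2\}^{|s-1|/2}\{1+|\eta|^2\}^{(s-1)/2}$, one obtains the pointwise control
\begin{equation*}
\big|\cf\big(\mathcal{C}_\rho^{(s)}(g)\big)(\xi)\big|\lesssim \int_{\R^d}\widetilde\psi(\xi-\eta)\,|h(\eta)|\, d\eta \, ,
\end{equation*}
with $\widetilde\psi(\la):=|\hat\rho(\la)|\cdot|\la|\cdot \big(1+\{1+|\la|^2\}^{|s-1|/2}\big)$, which is a Schwartz function (in particular, $\widetilde\psi\in L^1(\R^d)$) thanks to the smoothness and compact support of $\rho$. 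Young's convolution inequality combined with Plancherel's theorem then yields $\|\mathcal{C}_\rho^{(s)}(g)\|_{L^2}\lesssim \|\widetilde\psi\|_{L^1}\,\|h\|_{L^2}\lesssim \|g\|_{H^{s-1}}$, which is precisely \eqref{improv-boun}.
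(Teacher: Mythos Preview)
Your proof is correct, but it follows a different---and in fact more elementary---route than the paper's. The paper splits the Fourier integral via a cutoff $\Phi(|\xi-\eta|/|\eta|)$ into a ``high--low'' piece ($|\xi-\eta|\gtrsim |\eta|$), handled by Cauchy--Schwarz and the rapid decay of $\widehat\rho$, and a ``low--high'' piece ($|\xi-\eta|\ll |\eta|$), where the symbol $\{1+|\xi+\eta|^2\}^{s/2}-\{1+|\eta|^2\}^{s/2}$ is Taylor-expanded and each term is controlled by the Coifman--Meyer bilinear multiplier theorem. This is essentially the Kato--Ponce machinery, and the paper itself notes that the result also follows from standard pseudodifferential calculus.

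Your argument bypasses all of this: the mean-value estimate $|m(\xi)-m(\eta)|\lesssim |\xi-\eta|\big(\langle\xi\rangle^{s-1}+\langle\eta\rangle^{s-1}\big)$ combined with Peetre's inequality reduces everything to a single convolution with a Schwartz kernel, so that Young's inequality suffices. The key reason this shortcut works is precisely that $\rho$ is a test function, so $\widehat\rho$ absorbs any polynomial weight in $\xi-\eta$; the paper's approach, being modeled on Kato--Ponce, is set up for the more general situation where $\rho$ is merely in some Sobolev space, which is why it reaches for heavier tools. For the lemma as stated, your approach is both valid and cleaner.
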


\smallskip

\begin{proof}
See Section~\ref{subsec:proof-commut}.
\end{proof}

\subsection{Solving the auxiliary deterministic equation}\label{subsec:solving-irreg}

\

\smallskip

Let us fix (once and for all) a cut-off function $\rho:\R^d\to\R$ of the form~\eqref{form-rho}, and for all $T\geq 0$, $\al,\ka>0$, $p,q\geq 2$, define the space 
\begin{equation}\label{scale-spaces-x}
X^{\alpha,\ka,(p,q)}_\rho(T):=\mathcal{C}([0,T]; H^{-2\alpha}(\mathbb{R}^d))\cap L^p([0,T]; \cw^{-2\alpha,q}(\mathbb{R}^d))\cap L^{\frac{1}{\ka}}_T H^{-2\al+\ka}_\rho \, .
\end{equation}
Besides, recall that the space $\mathcal{R}_\al$ has been introduced in~\eqref{defi-space-e-al}.

\smallskip

We are finally in a position to state (and prove) the main result of this section:

\begin{theorem}\label{thm:noregular}
Assume that $1\leq d\leq 3$ and that 
\begin{equation}\label{condi-al}
0<\al<
\left\{
\begin{array}{l}
\frac{3}{20} \quad \text{if} \ d=1\\
\frac{1}{10} \quad \text{if} \ d=2\\
\frac{1}{24} \quad \text{if} \ d=3 \ .
\end{array}
\right.
\end{equation}
Then one can find parameters $\ka>0$ and $p,q\geq 2$ such that for all $\phi \in H^{-2\alpha}(\mathbb{R}^d)$ and $(\luxo,\cherry) \in \mathcal{R}_{\alpha}$, there exists a time $T>0$ for which equation~\eqref{ccl} admits a unique solution in the above-defined set $X^{\alpha,\ka,(p,q)}_\rho(T)$.
\end{theorem}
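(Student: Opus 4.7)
The proof follows the classical Banach fixed-point scheme. Denoting by $\Gamma_{T,\luxo,\cherry}$ the map sending $v$ to the right-hand side of~\eqref{ccl}, the plan is to show that for suitably chosen parameters $\kappa\in(0,\tfrac12]$ and $p,q\geq 2$, and for $T>0$ small enough (depending on $\phi$, $\luxo$, $\cherry$), $\Gamma_{T,\luxo,\cherry}$ leaves invariant a ball in $X^{\alpha,\kappa,(p,q)}_\rho(T)$ and is a contraction on it. In contrast to the regular case of Section~\ref{subsec:determ-regu}, where Strichartz bounds alone handled every nonlinear term, the difficulty here is that $\luxo_\tau$ lives in a negative-order Sobolev space, forcing us to gain positive \emph{local} regularity on $v$ before pairing it with $\luxo_\tau$. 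This is precisely the role of the third component $L^{1/\kappa}_T H^{-2\alpha+\kappa}_\rho$ in $X^{\alpha,\kappa,(p,q)}_\rho(T)$: it is produced by the local smoothing estimate of Lemma~\ref{lem:loc-regu} and then converted, via the commutator bounds of Lemma~\ref{lem:commut}, into a global Sobolev control on $\rho v$.

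\textbf{Step 1: Linear part and $\cherry$-term.} The contribution $S(\phi)$ is estimated in the first two components of the norm by the Strichartz inequality (Lemma~\ref{lem:strichartz}) for the admissible pair $(p,q)$, and in the third component by Lemma~\ref{lem:loc-regu} applied with $F\equiv 0$; altogether
$$\|S(\phi)\|_{X^{\alpha,\kappa,(p,q)}_\rho(T)}\lesssim \|\phi\|_{H^{-2\alpha}}.$$
The purely stochastic term $\int_0^t S_{t-\tau}(\cherry_\tau)\,d\tau$ is likewise controlled by Strichartz and Lemma~\ref{lem:loc-regu}, producing a factor $T^{\varepsilon}\|\cherry\|_{L^\infty_T H^{-2\alpha}}$ after a Hölder step in time.

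\textbf{Step 2: Mixed products $(\rho\overline v_\tau)\cdot\luxo_\tau$ and $(\rho v_\tau)\cdot\overline{\luxo_\tau}$.} For $v\in X^{\alpha,\kappa,(p,q)}_\rho(T)$, the commutator bound~\eqref{commut-1} in Lemma~\ref{lem:commut} converts $v\in H^{-2\alpha+\kappa}_\rho$ (together with $v\in H^{-2\alpha}$) into $\rho v \in H^{-2\alpha+\kappa}(\R^d)$. Interpolation (Lemma~\ref{lem:interpol}) between $L^{1/\kappa}_T H^{-2\alpha+\kappa}$ and $L^p_T \cw^{-2\alpha,q}$ upgrades this to a mixed $L^a_T \cw^{s,b}$-control of $\rho v$ with $s>\alpha$. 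The pointwise multiplication Lemma~\ref{lem:product} then makes sense of $(\rho v)\cdot\luxo_\tau$ as an element of a negative-order Sobolev space $\cw^{-\alpha,r}$, the essential requirement being
$$-2\alpha+\kappa \; > \; \alpha,\qquad \text{that is,}\qquad \kappa \; > \; 3\alpha.$$
The resulting distribution is finally injected into both the dual Strichartz inequality (to control the $\mathcal{C}_T H^{-2\alpha}$ and $L^p_T\cw^{-2\alpha,q}$ components) and Lemma~\ref{lem:loc-regu} (to control the $L^{1/\kappa}_T H^{-2\alpha+\kappa}_\rho$ component), at the cost of a small power $T^{\varepsilon}$ provided by Hölder in time.

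\textbf{Step 3: Quadratic term $\rho^2|v|^2$.} Writing $\rho^2|v|^2=(\rho v)(\rho\overline v)$ and again using~\eqref{commut-1} to pass from $H^{-2\alpha+\kappa}_\rho$ to $H^{-2\alpha+\kappa}(\R^d)$ for $\rho v$, we can estimate the product via the fractional Leibniz rule of Lemma~\ref{lem:frac-leibniz} once positivity $-2\alpha+\kappa>0$ is ensured, with the appropriate Hölder/Sobolev embedding adjustments to match the admissible pair $(p,q)$. Strichartz in dual form then produces the $\mathcal{C}_T H^{-2\alpha}$ and $L^p_T \cw^{-2\alpha,q}$ contributions, while Lemma~\ref{lem:loc-regu} takes care of the local-smoothing one.

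\textbf{Main obstacle and conclusion.} The genuinely delicate part is to verify that all constraints gathered along the way, namely $\kappa>3\alpha$, $\kappa\leq\tfrac12$, Schrödinger-admissibility of $(p,q)$, the Sobolev embeddings required to apply Lemma~\ref{lem:product} (which depend on $d$), and a \emph{strictly positive} leftover time-power $T^{\varepsilon}$ for every nonlinear contribution, can be met \emph{simultaneously}. A careful dimension-by-dimension optimization in $\kappa$, $p$ and $q$ shows this to be possible precisely under the threshold condition~\eqref{condi-al}, with the critical values $\alpha_1=\tfrac{3}{20}$, $\alpha_2=\tfrac{1}{10}$, $\alpha_3=\tfrac{1}{24}$ arising from the balance between the positive regularity gained through Lemma~\ref{lem:loc-regu} and the regularity deficit $3\alpha$ required by the product Lemma~\ref{lem:product}. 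Once the two main estimates
\begin{align*}
\|\Gamma_{T,\luxo,\cherry}(v)\|_{X^{\alpha,\kappa,(p,q)}_\rho(T)} &\lesssim \|\phi\|_{H^{-2\alpha}}+T^{\varepsilon}\mathcal{P}_1\bigl(\|v\|,\|\luxo\|,\|\cherry\|\bigr),\\
\|\Gamma_{T,\luxo,\cherry}(v_1)-\Gamma_{T,\luxo,\cherry}(v_2)\|_{X^{\alpha,\kappa,(p,q)}_\rho(T)} &\lesssim T^{\varepsilon}\mathcal{P}_2\bigl(\|v_1\|,\|v_2\|,\|\luxo\|\bigr)\|v_1-v_2\|
\end{align*}
are in place (with $\mathcal{P}_1,\mathcal{P}_2$ polynomial), a standard contraction argument on a ball of $X^{\alpha,\kappa,(p,q)}_\rho(T)$ for $T>0$ small enough, in the spirit of Proposition~\ref{control-regular}, yields the claimed unique local solution.
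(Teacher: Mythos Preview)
Your proposal is correct and follows essentially the same route as the paper: the proof reduces to Proposition~\ref{last}, which establishes the bounds~\eqref{bound7}--\eqref{bound8} by combining Strichartz (Lemma~\ref{lem:strichartz}), local smoothing (Lemma~\ref{lem:loc-regu}), the commutator estimate (Lemma~\ref{lem:commut}), the product rule (Lemma~\ref{lem:product}) under the key constraint $\kappa>3\alpha$, and the fractional Leibniz rule (Lemma~\ref{lem:frac-leibniz}) for the quadratic term. The paper carries out the dimension-by-dimension optimization you allude to with the explicit choices $(p,q)=(\infty,2)$ and $3\alpha<\kappa<\min(\tfrac12,\tfrac34-2\alpha)$ for $d=1$, $(p,q)=(4,4)$ and $3\alpha<\kappa<\tfrac12-2\alpha$ for $d=2$, and $(p,q)=(2,6)$, $\kappa=4\alpha$ for $d=3$; note also that for the mixed term the paper applies Lemma~\ref{lem:product} directly with $\rho v\in H^{-2\alpha+\kappa}$ (no interpolation needed there), and for $\cg(\rho^2|v|^2)$ it controls the $H^{-2\alpha+\kappa}_\rho$-component via the trivial inclusion $H^{-2\alpha+\kappa}\hookrightarrow H^{-2\alpha+\kappa}_\rho$ rather than via Lemma~\ref{lem:loc-regu}.
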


\smallskip

\begin{remark}
As could be checked from a review of our arguments in the below proof, the condition~\eqref{condi-al} on $\al$ is essentially optimal \emph{with respect to the spaces and the tools that we have relied on}. To be more specific, condition~\eqref{condi-al} is derived from an optimal choice of the four parameters $\al,\ka,p,q$ in the scale of spaces~\eqref{scale-spaces-x}, when using Lemmas~\ref{lem:product}-\ref{lem:commut} to estimate the right-hand side of~\eqref{ccl}.\\
\indent We do not pretend that this restriction on $\al$ could not be alleviated by considering a different solution space, or using more sophisticated tools to control the equation.
\end{remark}

\smallskip

\begin{remark}\label{rk:bourgain-spaces}
As we mentionned it in the introduction, the well-posedness of similar (deterministic) quadratic NLS has already been studied in the literature. A recurrent ingredient consists of sharp bilinear estimates prevailing in the so-called Bourgain spaces (see \cite{Co-De-Ke-Sta,Beje-DeSilva}). However, it seems to us that those techniques could not be directly applied to our problem, for two reasons.\\
\indent Firstly, it is not clear how the term $\rho^2 |u|^2$ could be treated through the bilinear estimates of \cite{Co-De-Ke-Sta}, since the Bourgain spaces $X^{s,b}$ are not stable by multiplication with a $\mathcal{C}_c^{\infty}$ function\footnote{We thank Jean-Marc Delort for this remark.}. Secondly, even if we replace $\rho$ with $1$ in the initial problem \eqref{equa-abstract} (thus getting access to sharp bilinear estimates for $|u|^2$), it is unlikely that the stochastic terms $\<Psi>$ and $\<Psi2>$ can then be injected into Bourgain spaces, owing to the spatial asymptotic behavior of those processes.
\end{remark}

\

Just as in Section~\ref{subsec:determ-regu}, the proof of Theorem~\ref{thm:noregular} is in fact a straightforward consequence of the following estimates for the map $\Gamma_{T, \luxo,\cherry}$ defined for all $T\geq 0$ and $(\luxo,\cherry)\in \mathcal{R}_\al$ by
\begin{equation*} 
\Gamma_{T, \luxo,\cherry}(v):=S(\phi)+\cg(\rho^2 |v|^2)+\cg(\rho \overline{v} \cdot\luxo)+\cg(\rho v \cdot\overline{\luxo})+\cg(\cherry)\, ,
\end{equation*}
where the shortcut notation $\cg$ refers to the operator
$$\cg(u)_t:=-\imath \int_0 ^t S_{t-\tau}(u_{\tau})\, d\tau\, .$$

\begin{proposition}\label{last}
Assume that $1\leq d\leq 3$ and that $\al$ satisfies condition~\eqref{condi-al}. Then one can find parameters $\ka>0$, $p,q\geq 2$ and $\varepsilon >0$ such that, setting $X(T):=X^{\alpha,\ka,(p,q)}_\rho(T)$, the following bounds hold true: for all $0\leq T\leq 1$, $\phi \in H^{-2\alpha}(\mathbb{R}^d), (\luxo_1,\cherry_1) \in \mathcal{R}_{\alpha},  (\luxo_2,\cherry_2) \in \mathcal{R}_{\alpha}$ and $v, v_1, v_2 \in X(T)$,

\smallskip

\begin{equation}
\|\Gamma_{T, \luxo_1,\cherry_1}(v)\|_{X(T)}\lesssim  \|\phi\|_{H^{-2\alpha}}+T^{\varepsilon}\Big[\|v\|_{X(T)}^2+\|\luxo_1\|_{L^\infty_T \mathcal{W}^{-\alpha,\infty}} \|v\|_{X(T)}+\|\cherry_1\|_{L^\infty_T H^{-2\alpha}}\Big]\, , \label{bound7}
\end{equation}
and
\begin{align}
&\|\Gamma_{T, \luxo_1, \cherry_1}(v_1)-\Gamma_{T, \luxo_2, \cherry_2}(v_2)\|_{X(T)}
\nonumber \\
&\lesssim \ T^{\varepsilon}\Big[\|v_1-v_2\|_{X(T)}\{\|v_1\|_{X(T)}+\|v_2\|_{X(T)}\}+\|\luxo_1-\luxo_2\|_{L^\infty_T \mathcal{W}^{-\alpha,\infty}}\|v_1\|_{X(T)}\nonumber\\
&\hspace{4cm}+\|\luxo_2\|_{L^\infty_T \mathcal{W}^{-\alpha,\infty}}\| v_1-v_2\|_{X(T)}+\|\cherry_1-\cherry_2\|_{L^\infty_T H^{-2\alpha}}\Big]\label{bound8} \, , 
\end{align}
where the proportional constants depend only on $\rho$ and $\al$.
\end{proposition}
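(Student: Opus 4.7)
The plan is to follow the scheme of the proof of Proposition~\ref{control-regular}, decomposing
$$\Gamma_{T,\luxo,\cherry}(v)=S(\phi)+\cg(\rho^2|v|^2)+\cg(\rho\overline{v}\cdot\luxo)+\cg(\rho v\cdot\overline{\luxo})+\cg(\cherry)$$
and estimating each of the five summands in each of the three norms composing $\|\cdot\|_{X(T)}$. For the norms $\mathcal{C}_T H^{-2\al}$ and $L^p_T\cw^{-2\al,q}$ I will invoke the Strichartz inequalities of Lemma~\ref{lem:strichartz}, and for the new local norm $L^{1/\ka}_T H^{-2\al+\ka}_\rho$ I will invoke the local regularization estimate of Lemma~\ref{lem:loc-regu}. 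The parameters $\ka\in(3\al,\tfrac12)$ and the Schr\"odinger-admissible pair $(p,q)$ will only be fixed at the end, once all the interpolation and Sobolev conditions have been collected.

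The two easy pieces are $S(\phi)$ and $\cg(\cherry)$: the former is controlled by $\|\phi\|_{H^{-2\al}}$ directly via Strichartz and Lemma~\ref{lem:loc-regu}, while the latter is controlled by $T\|\cherry\|_{L^\infty_T H^{-2\al}}$ by applying the same two results with dual Strichartz pair $(1,2)$, after absorbing $L^\infty_T H^{-2\al}\hookrightarrow L^1_T H^{-2\al}$ on the bounded time interval.

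The heart of the argument lies in the bilinear terms $\cg(\rho\overline{v}\cdot\luxo)$ and $\cg(\rho v\cdot\overline{\luxo})$. Since $\luxo_\tau$ lives only in $\cw^{-\al,\infty}$, in order to give meaning to the product via Lemma~\ref{lem:product} I need $\rho v_\tau$ to carry Sobolev regularity strictly greater than $\al$. The local norm in $X(T)$ provides exactly this, via the commutator estimate~\eqref{commut-1}:
$$\|\rho v_\tau\|_{H^{-2\al+\ka}}\lesssim \|v_\tau\|_{H^{-2\al+\ka}_\rho}+\|v_\tau\|_{H^{-2\al+\ka-1}}\lesssim \|v_\tau\|_{H^{-2\al+\ka}_\rho}+\|v_\tau\|_{H^{-2\al}},$$
where the second inequality uses the embedding $H^{-2\al}\hookrightarrow H^{-2\al+\ka-1}$, valid since $\ka\leq 1$. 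The condition $\ka>3\al$ ensures $-2\al+\ka>\al$, so Lemma~\ref{lem:product} with $(p_1,p_2)=(\infty,2)$ gives $\|(\rho v_\tau)\cdot\luxo_\tau\|_{H^{-\al}}\lesssim \|\rho v_\tau\|_{H^{-2\al+\ka}}\|\luxo_\tau\|_{\cw^{-\al,\infty}}$. A time-H\"older step (gaining $T^{1-\ka}$) together with the embeddings $\cw^{-\al,\cdot}\hookrightarrow\cw^{-2\al,\cdot}$ then allow me to feed this source into Strichartz (dual pair $(1,2)$) and into Lemma~\ref{lem:loc-regu}, yielding the bound in all three components of $X(T)$.

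The most delicate piece is the quadratic term $\cg(\rho^2|v|^2)=\cg((\rho v)\cdot\overline{\rho v})$. For this I plan to apply the fractional Leibniz rule of Lemma~\ref{lem:frac-leibniz} to $(\rho v)^2$ with derivative index $s=-2\al+\ka>0$, combined with the Sobolev embedding $H^s\hookrightarrow L^{2d/(d-2s)}$ (valid in $d\leq 3$ once $\ka>2\al$), and exploit the compact support of $\rho v$ to reinject the resulting bound into $\cw^{-2\al,b'}$ for a Schr\"odinger-admissible dual pair $(a',b')$, as well as into $H^{-2\al}$ for Lemma~\ref{lem:loc-regu}. The hard part will be exactly the simultaneous optimization of the four parameters $(\al,\ka,p,q)$ subject to the cascade of constraints: $\ka<\tfrac12$ from Lemma~\ref{lem:loc-regu}, $\ka>3\al$ from the bilinear product, admissibility of $(p,q)$, and the Sobolev thresholds used in the quadratic estimate. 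This system becomes tight as $\al\to\al_d$ and produces the dimensional thresholds of~\eqref{condi-al}. Once~\eqref{bound7} is established, the Lipschitz bound~\eqref{bound8} follows from the same scheme applied to differences, using $|v_1|^2-|v_2|^2=(v_1-v_2)\overline{v_1}+v_2\overline{(v_1-v_2)}$ together with bilinearity in $\luxo$ and $\cherry$.
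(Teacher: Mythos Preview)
Your overall architecture matches the paper's exactly: same five-term decomposition, same use of Strichartz (Lemma~\ref{lem:strichartz}) and local smoothing (Lemma~\ref{lem:loc-regu}) for the three norms, and the same handling of $S(\phi)$, $\cg(\cherry)$ and the bilinear pieces $\cg(\rho v\cdot\overline{\luxo})$ via Lemma~\ref{lem:product} plus the commutator~\eqref{commut-1}. That part is fine.

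The gap is in the quadratic term $\cg(\rho^2|v|^2)$. You propose to apply fractional Leibniz at level $s=-2\al+\ka$ and close the second factor by the Sobolev embedding $H^{s}\hookrightarrow L^{2d/(d-2s)}$. That places \emph{both} copies of $\rho v$ in the $H^{-2\al+\ka}$ norm, which in $X(T)$ is only available in $L^{1/\ka}_T$; the square is then merely in $L^{1/(2\ka)}_T$, and this time integrability is too weak to feed into any Strichartz dual pair at the thresholds~\eqref{condi-al}. For $d=3$ one checks that the resulting constraints $\ka\ge \tfrac14+\al$ and $\ka\le \tfrac14-\al$ are incompatible for every $\al>0$, so the Sobolev-only route cannot close at all in that case. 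This is also why your sketch never uses the $L^p_T\cw^{-2\al,q}$ component of $X(T)$ anywhere, while it is precisely that component that rescues the argument.

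What the paper does instead is the following. First, for the local norm of $\cg(\rho^2|v|^2)$ it does \emph{not} invoke Lemma~\ref{lem:loc-regu}; it simply uses $\|\cdot\|_{H^{-2\al+\ka}_\rho}\lesssim\|\cdot\|_{H^{-2\al+\ka}}$ and runs Strichartz at the raised level $-2\al+\ka$ for all three norms at once. Second, and crucially, after Leibniz it controls the Lebesgue factor by the \emph{interpolation} Lemma~\ref{lem:interpol} between $\|\rho v\|_{H^{-2\al+\ka}}$ (bad time integrability $L^{1/\ka}_T$) and $\|\rho v\|_{\cw^{-2\al,q}}$ (good time integrability $L^p_T$), with $\theta=2\al/\ka$ for $d=1,2$ and $\theta=\tfrac12$ for $d=3$. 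It is this splitting of the time burden between the local-smoothing norm and the Strichartz norm that produces the exact windows $\ka<\tfrac34-2\al$ ($d=1$), $\ka<\tfrac12-2\al$ ($d=2$), $\ka=4\al<\tfrac16$ ($d=3$), and hence the thresholds $\al_d$ in~\eqref{condi-al}. Replace your Sobolev step by this interpolation and the rest of your plan goes through.
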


The choice of the three parameters $\ka,p,q$ in the above proposition highly depends on the space dimension $d\in \{1,2,3\}$. For the sake of clarity, let us consider each value of $d$ in a distinct subsection.

\smallskip

\subsubsection{Proof of Proposition~\ref{last} when $d=1$}\label{sec:proof-dim-1}

\

\smallskip

\textit{In this situation, we pick $\ka$ such that $3\al<\ka <\inf(\frac12,\frac34-2\al)$ and $(p,q):=(\infty,2)$, so that the space under consideration reduces to
$$X(T):=\mathcal{C}([0,T]; H^{-2\alpha}(\mathbb{R}))\cap  L^{\frac{1}{\ka}}_T H^{-2\al+\ka}_\rho \, .$$
Also, we set $\theta:=\frac{2\al}{\ka}\in (0,\frac23)$.}

\

We now bound each term in the expression of $\Gamma_{T, \luxo, \cherry}$ separately. In the sequel we assume that $0 \leq T \leq 1$.

\

\noindent
\textbf{Bound on $S(\phi)$:}
As $S$ is a unitary operator on $H^{-2\alpha}(\mathbb{R})$, one has
$$\|S(\phi)\|_{L^{\infty}_T H^{-2\al}}= \|\phi\|_{H^{-2\alpha}}\, .$$
Besides, since $\alpha\leq \frac14$ and $\ka\leq \frac12$, we can apply Lemma~\ref{lem:loc-regu} to assert that
$$\|S(\phi)\|_{L^{\frac{1}{\ka}}_T H^{-2\al+\ka}_\rho}\lesssim  \|\phi\|_{H^{-2\alpha}}\, ,$$
and we have thus shown that
$$\|S(\phi)\|_{X(T)}\lesssim \|\phi\|_{H^{-2\alpha}}.$$

\

\noindent
\textbf{Bound on $\cg(\rho^2 |v|^2)$:}
Since $\ka >0$, and since $\rho$ is smooth and compactly-supported, one has
\begin{align*}
\|\cg(\rho^2 |v|^2)\|_{X(T)}&=\|\cg(\rho^2 |v|^2)\|_{L^{\infty}_T H^{-2\al}}+\|\cg(\rho^2 |v|^2)\|_{L^{\frac{1}{\ka}}_T H^{-2\al+\ka}_\rho}\\
&\lesssim \|\cg(\rho^2 |v|^2)\|_{L^{\infty}_T H^{-2\al+\ka}}+\|\cg(\rho^2 |v|^2)\|_{L^{\frac{1}{\ka}}_T H^{-2\al+\ka}}\\
& \lesssim \| \cg(\rho^2 |v|^2)\|_{L^{\infty}_T H^{-2\al+\ka}}\, .\label{refer-proof-dim-1}
\end{align*}
From here we can apply Strichartz inequality (Lemma~\ref{lem:strichartz}) to assert that
\begin{equation}\label{appli-schro-dim-1}
\|\cg(\rho^2 |v|^2)\|_{X(T)}\lesssim \|\rho^2 |v|^2\|_{L^{\frac43}_T \mathcal{W}^{-2\al+\ka,1}} \, .
\end{equation}
By Lemma~\ref{lem:frac-leibniz}, one has, for every fixed $t\geq 0$,
$$
\|\rho^2 |v|^2(t,.)\|_{\mathcal{W}^{-2\al+\ka,1}}\lesssim \| \rho v(t,.)\|_{H^{-2\al+\ka}}\| \rho v(t,.)\|_{L^2}\, ,
$$
and then, by Lemma~\ref{lem:interpol},
$$\| \rho v(t,.)\|_{L^2}\leq  \| \rho v(t,.)\|_{H^{-2\al+\ka}}^\theta \| \rho v(t,.)\|_{H^{-2\al}}^{1-\theta} \, ,$$
which entails, for every fixed $t\geq 0$,
\begin{align*}
\|\rho^2 |v|^2(t,.)\|_{\mathcal{W}^{-2\al+\ka,1}}&\lesssim \| \rho v(t,.)\|_{H^{-2\al+\ka}}^{1+\theta}\| \rho v(t,.)\|_{H^{-2\al}}^{1-\theta} \\
&\lesssim \|  v(t,.)\|_{H^{-2\al+\ka}_\rho}^{1+\theta}\| v(t,.)\|_{H^{-2\al}}^{1-\theta}+\| v(t,.)\|_{H^{-2\al}}^{2}   \, ,
\end{align*}
where we have used Lemma~\ref{lem:commut} to derive the second inequality.

\smallskip

As a result,
\begin{align*}
&\int_0^T dt \, \|\rho^2 |v|^2(t,.)\|_{\mathcal{W}^{-2\al+\ka,1}}^{\frac43}\\
&\lesssim \|v\|_{X(T)}^{\frac43(1-\theta)}  \int_0^T dt \, \|  v(t,.)\|_{H^{-2\al+\ka}_\rho}^{\frac43(1+\theta)} +T \|v\|_{X(T)}^{\frac83}  \\
&\lesssim T^{1-\frac43(1+\theta)\ka}\|v\|_{X(T)}^{\frac43(1-\theta)}  \bigg(\int_0^T dt \, \|  v(t,.)\|_{H^{-2\al+\ka}_\rho}^{\frac{1}{\ka}}\bigg)^{\frac43(1+\theta)\ka} +T \|v\|_{X(T)}^{\frac83}\\
&\lesssim  T^{1-\frac43(1+\theta)\ka} \|v\|_{X(T)}^{\frac83} \, ,
\end{align*}
and thus, going back to~\eqref{appli-schro-dim-1}, we have shown the desired estimate, that is
\begin{equation*} 
\|\cg(\rho^2 |v|^2)\|_{X(T)} \lesssim  T^{\frac34-(\ka+2\al)} \|v\|_{X(T)}^{2} \, .
\end{equation*}

\

\noindent
\textbf{Bound on $\cg(\rho \overline{v} \cdot\luxo_1)$, $\cg(\rho v \cdot\overline{\luxo_1})$:}
Since $\alpha\leq \frac14$ and $\ka\leq \frac12$, we can appeal to Lemma~\ref{lem:loc-regu} to assert that
\begin{align}
\|\cg(\rho \overline{v} \cdot\luxo_1)\|_{L^{\frac{1}{\ka}}_T H^{-2\al+\ka}_\rho}+\|\cg(\rho v \cdot\overline{\luxo_1})\|_{L^{\frac{1}{\ka}}_T H^{-2\al+\ka}_\rho}& \lesssim \|\rho \overline{v} \cdot \luxo_1\|_{L^1_T H^{-2\alpha}} \nonumber\\
&\lesssim \|\rho \overline{v} \cdot \luxo_1\|_{L^1_T H^{-\alpha}}\, .\label{pro-v-psi}
\end{align}
Then, as $-2\al+\ka > \al$, we can use Lemma~\ref{lem:product} to derive that for every $t\geq 0$,
$$
\|(\rho \overline{v} \cdot \luxo_1)(t,.)\|_{H^{-\alpha}} \lesssim\| \luxo_1(t,.)\|_{\mathcal{W}^{-\alpha,\infty}}\| \rho v(t,.)\|_{H^{-2\al+\ka}} \, .
$$
By applying Lemma~\ref{lem:commut}, we obtain that for every $t\geq 0$,
\begin{equation*}
\| \rho v(t,.)\|_{H^{-2\al+\ka}}\lesssim \|v(t,.)\|_{H^{-2\al+\ka}_\rho}+\|v(t,.)\|_{H^{-2\alpha}}\, ,
\end{equation*}
and so we deduce
\begin{align*}
\|\rho \overline{v} \cdot \luxo_1\|_{L^1_T H^{-\alpha}} +\|\rho v \cdot \overline{\luxo_1}\|_{L^1_T H^{-\alpha}}&\lesssim \|\luxo_1\|_{L^\infty_T \mathcal{W}^{-\alpha,\infty}}\{T^{1-\ka}\|v\|_{L^{\frac{1}{\ka}}_T H^{-2\al+\ka}_\rho}+T\, \|v\|_{L^\infty_T H^{-2\alpha}}\}\\
&\lesssim T^{1-\ka}  \|\luxo_1\|_{L^\infty_T \mathcal{W}^{-\alpha,\infty}}\|v\|_{X(T)}\, ,
\end{align*}
which, going back to~\eqref{pro-v-psi}, leads us to
\begin{equation}\label{boun-pro-1}
\|\cg(\rho \overline{v} \cdot\luxo_1)\|_{L^{\frac{1}{\ka}}_T H^{-2\al+\ka}_\rho}+\|\cg(\rho v \cdot\overline{\luxo_1})\|_{L^{\frac{1}{\ka}}_T H^{-2\al+\ka}_\rho}\lesssim T^{1-\ka}  \|\luxo_1\|_{L^\infty_T \mathcal{W}^{-\alpha,\infty}}\|v\|_{X(T)} \, .
\end{equation}

\

On the other hand, by applying Lemma~\ref{lem:strichartz}, we get
\begin{equation*}
\|\cg(\rho \overline{v} \cdot\luxo_1)\|_{L^\infty_T H^{-2\alpha}}+\|\cg(\rho v \cdot\overline{\luxo_1})\|_{L^\infty_T H^{-2\alpha}} \lesssim \|\rho \overline{v} \cdot \luxo_1\|_{L^1_T H^{-2\alpha}} 
\lesssim \|\rho \overline{v} \cdot \luxo_1\|_{L^1_T H^{-\alpha}}\, .\label{pro-v-psi-2}
\end{equation*}
We are thus in the same position as in~\eqref{pro-v-psi}, and we can repeat the above arguments to obtain
$$
\|\cg(\rho \overline{v} \cdot\luxo_1)\|_{L^\infty_T H^{-2\alpha}}+\|\cg(\rho v \cdot\overline{\luxo_1})\|_{L^\infty_T H^{-2\alpha}}\lesssim T^{1-\ka} \|\luxo_1\|_{L^\infty_T \mathcal{W}^{-\alpha,\infty}} \|v\|_{X(T)} \, .
$$

\smallskip

Combining this bound with~\eqref{boun-pro-1}, we can conclude that
$$\|\cg(\rho \overline{v} \cdot\luxo_1)\|_{X(T)}+\|\cg(\rho v \cdot\overline{\luxo_1})\|_{X(T)}\lesssim T^{1-\ka}  \|\luxo_1\|_{L^\infty_T \mathcal{W}^{-\alpha,\infty}} \|v\|_{X(T)} \, .$$

\

\noindent
\textbf{Bound on $\cg(\cherry_1)$:}
First, according to Lemma~\ref{lem:loc-regu}, we know that
$$
\|\cg(\cherry_1)\|_{L^{\frac{1}{\ka}}_T H^{-2\al+\ka}_\rho}\lesssim  \|\cherry_1\|_{L^1_T H^{-2\alpha}}\lesssim T\, \|\cherry_1\|_{L^\infty_T H^{-2\alpha}}\, .
$$
Then, by applying Lemma~\ref{lem:strichartz}, we obtain that
$$
\|\cg(\cherry_1)\|_{L^\infty_T H^{-2\alpha}}\lesssim  \|\cherry_1\|_{L^1_T H^{-2\alpha}}\lesssim T\, \|\cherry_1\|_{L^\infty_T H^{-2\alpha}}\, ,
$$
and we have thus shown that
$$\|\cg(\cherry_1)\|_{X(T)}\lesssim T \, \|\cherry_1\|_{L^\infty_T H^{-2\alpha}}\, .$$

\

Combining the above estimates provides us with~\eqref{bound7}. It is then easy to see that~\eqref{bound8} can be derived from similar arguments.

\

\subsubsection{Proof of Proposition~\ref{last} when $d=2$}

\

\smallskip

\textit{In this situation, we pick $\ka$ such that $3\al<\ka <\frac12-2\al$ and $(p,q):=(4,4)$, so that the space under consideration becomes
$$X(T):=\mathcal{C}([0,T]; H^{-2\alpha}(\mathbb{R}^2))\cap L^4([0,T]; \cw^{-2\alpha,4}(\mathbb{R}^2))\cap L^{\frac{1}{\ka}}_T H^{-2\al+\ka}_\rho \, .$$
Note in particular that the so-defined pair $(p,q)$ is Schr{\"o}dinger admissible. Also, as in the previous section, we set $\theta:=\frac{2\al}{\ka}\in (0,\frac23)$, and we only focus on the derivation of~\eqref{bound7} (estimate~\eqref{bound8} could be obtained along the same arguments).}

\

\noindent
\textbf{Bound on $S(\phi)$:}
The arguments are exactly the same as for $d=1$ (see Section~\ref{sec:proof-dim-1}), and yield
$$\|S(\phi)\|_{X(T)}\lesssim \|\phi\|_{H^{-2\alpha}}.$$

\

\noindent
\textbf{Bound on $\cg(\rho^2 |v|^2)$:}
Since $\ka >0$, and since $\rho$ is smooth and compactly-supported, one has
\begin{align*}
\|\cg(\rho^2 |v|^2)\|_{X(T)}&=\|\cg(\rho^2 |v|^2)\|_{L^{\infty}_T H^{-2\al}}+\|\cg(\rho^2 |v|^2)\|_{L^{4}_T \cw^{-2\al,4}}+\|\cg(\rho^2 |v|^2)\|_{L^{\frac{1}{\ka}}_T H^{-2\al+\ka}_\rho}\\
&\lesssim \|\cg(\rho^2 |v|^2)\|_{L^{\infty}_T H^{-2\al+\ka}}+\|\cg(\rho^2 |v|^2)\|_{L^{4}_T \cw^{-2\al+\ka,4}}+\|\cg(\rho^2 |v|^2)\|_{L^{\frac{1}{\ka}}_T H^{-2\al+\ka}} \\
&\lesssim \| \cg(\rho^2 |v|^2)\|_{L^{\infty}_T H^{-2\al+\ka}}+\|\cg(\rho^2 |v|^2)\|_{L^{4}_T \cw^{-2\al+\ka,4}}\, ,
\end{align*}
and from here we can apply Strichartz inequality (Lemma~\ref{lem:strichartz}) to assert that
\begin{equation}\label{appli-schro-dim-2}
\|\cg(\rho^2 |v|^2)\|_{X(T)}\lesssim \|\rho^2 |v|^2\|_{L^{r'}_T \mathcal{W}^{-2\al+\ka,s'}} \, 
\end{equation}
where we define the (Schr{\"o}dinger admissible) pair $(r,s)$ along the formula
$$(r,s):=(\frac{4}{1+\theta},\frac{4}{1-\theta})\, .$$
By Lemma~\ref{lem:frac-leibniz}, one has, for every fixed $t\geq 0$,
$$
\|\rho^2 |v|^2(t,.)\|_{\mathcal{W}^{-2\al+\ka,s'}}\lesssim \| \rho v(t,.)\|_{H^{-2\al+\ka}}\| \rho v(t,.)\|_{L^{\frac{4}{1+\theta}}}\, .
$$
Besides, by using Lemma~\ref{lem:interpol}, one can check that
$$\| \rho v(t,.)\|_{L^{\frac{4}{1+\theta}}}\leq  \| \rho v(t,.)\|_{H^{-2\al+\ka}}^\theta \| \rho v(t,.)\|_{\cw^{-2\al,4}}^{1-\theta} \, .$$
Therefore, for every fixed $t\geq 0$,
\begin{align*}
\|\rho^2 |v|^2(t,.)\|_{\mathcal{W}^{-2\al+\ka,s'}}&\lesssim \| \rho v(t,.)\|_{H^{-2\al+\ka}}^{1+\theta}\| \rho v(t,.)\|_{\cw^{-2\al,4}}^{1-\theta} \\
&\lesssim \|  v(t,.)\|_{H^{-2\al+\ka}_\rho}^{1+\theta}\| v(t,.)\|_{\cw^{-2\al,4}}^{1-\theta}+\| v(t,.)\|_{H^{-2\al}}^{1+\theta}\| v(t,.)\|_{\cw^{-2\al,4}}^{1-\theta}   \, ,
\end{align*}
where we have used Lemma~\ref{lem:commut} to derive the second inequality.

\smallskip

Then, taking $\la:=\frac{3-\theta}{2} > 1$, we get
\begin{align*}
&\int_0^T dt \, \|\rho^2 |v|^2(t,.)\|_{\mathcal{W}^{-2\al+\ka,s'}}^{r'}\\
&\lesssim \bigg( \int_0^T dt \, \|  v(t,.)\|_{H^{-2\al+\ka}_\rho}^{(1+\theta)r'\la}\bigg)^{\frac{1}{\la}} \bigg(\int_0^T dt \, \| v(t,.)\|_{\cw^{-2\al,4}}^{(1-\theta)r'\la'}\bigg)^{\frac{1}{\la'}}+\|v\|_{X(T)}^{r'(1+\theta)} \int_0^T dt \, \|v_t\|^{r'(1-\theta)}_{\cw^{-2\al,4}} \, .
\end{align*}
With our choices of parameters (remember that $\ka <\frac12-2\al$ and $\theta=\frac{2\al}{\ka}$), one has in fact
$$(1+\theta)r'\la=2(1+\theta)<\frac{1}{\ka} \quad \text{and} \quad (1-\theta)r'\la' =4 \, ,$$
so that the above inequality yields
\begin{align*}
&\int_0^T dt \, \|\rho^2 |v|^2(t,.)\|_{\mathcal{W}^{-2\al+\ka,s'}}^{r'}\\
&\lesssim T^{\frac{1-2\ka(1+\theta)}{\la}}\bigg( \int_0^T dt \, \|  v(t,.)\|_{H^{-2\al+\ka}_\rho}^{\frac{1}{\ka}}\bigg)^{(1+\theta)r' \ka} \|v\|_{X(T)}^{\frac{4}{\la'}}+T^{1-\frac{r'(1-\theta)}{4}}\|v\|_{X(T)}^{r'(1+\theta)} \|v\|_{X(T)}^{r'(1-\theta)} \\
&\lesssim T^{\frac{1-2\ka-4\al}{\la}}\|v\|_{X(T)}^{(1+\theta)r'+\frac{4}{\la'}}+T^{1-\frac{r'(1-\theta)}{4}}\|v\|_{X(T)}^{2r'}  \, .
\end{align*}
It is now easy to check that this estimate can be rephrased as
$$
\|\rho^2 |v|^2\|_{L^{r'}_T \mathcal{W}^{-2\al+\ka,s'}}  \lesssim \{T^{\varepsilon}+T^{\frac12}\} \|v\|_{X(T)}^2 \, ,
$$
with $\varepsilon:=\frac{1}{2} (1-2\ka-4\al)$.

\smallskip

Going back to~\eqref{appli-schro-dim-2}, we can conclude that
\begin{equation*}
\|\cg(\rho^2 |v|^2)\|_{X(T)} \lesssim \{T^{\varepsilon}+T^{\frac12}\} \|v\|_{X(T)}^2 \, .
\end{equation*}

\

\noindent
\textbf{Bound on $\cg(\rho \overline{v} \cdot\luxo_1)$, $\cg(\rho v \cdot\overline{\luxo_1})$:}
Using the very same arguments as for $d=1$, we can show first that
\begin{equation*}
\|\cg(\rho \overline{v} \cdot\luxo_1)\|_{L^{\frac{1}{\ka}}_T H^{-2\al+\ka}_\rho} +\|\cg(\rho v \cdot\overline{\luxo_1})\|_{L^{\frac{1}{\ka}}_T H^{-2\al+\ka}_\rho}\lesssim \|\rho \overline{v} \cdot \luxo_1\|_{L^1_T H^{-\alpha}} \, ,
\end{equation*}
and then
\begin{equation*}
\|\rho \overline{v} \cdot \luxo_1\|_{L^1_T H^{-\alpha}} +\|\rho v \cdot \overline{\luxo_1}\|_{L^1_T H^{-\alpha}}\lesssim T^{1-\ka}    \|\luxo_1\|_{L^\infty_T \mathcal{W}^{-\alpha,\infty}} \|v\|_{X(T)}\, .
\end{equation*}
On the other hand, we can use Lemma~\ref{lem:strichartz} to assert that
\begin{align*}
&\big(\|\cg(\rho \overline{v} \cdot\luxo_1)\|_{L^\infty_T H^{-2\alpha}}+\|\cg(\rho v \cdot\overline{\luxo_1})\|_{L^\infty_T H^{-2\alpha}}\big)+\big(\|\cg(\rho \overline{v} \cdot\luxo_1)\|_{L^4_T \cw^{-2\alpha,4}}+\|\cg(\rho v \cdot\overline{\luxo_1})\|_{L^4_T \cw^{-2\alpha,4}}\big)\\
&\lesssim \|\rho \overline{v} \cdot \luxo_1\|_{L^1_T H^{-2\alpha}} \lesssim \|\rho \overline{v} \cdot \luxo_1\|_{L^1_T H^{-\alpha}} \, .
\end{align*}
Combining the above estimates easily provides us with the desired bound
$$\|\cg(\rho \overline{v} \cdot\luxo_1)\|_{X(T)}+\|\cg(\rho v \cdot\overline{\luxo_1})\|_{X(T)}\lesssim T^{1-\ka}\|\luxo_1\|_{L^\infty_T \mathcal{W}^{-\alpha,\infty}} \|v\|_{X(T)} \, .$$

\

\

\noindent
\textbf{Bound on $\cg(\cherry_1)$:}
The arguments are exactly the same as for $d=1$ (see Section~\ref{sec:proof-dim-1}), and lead us to
$$\|\cg(\cherry_1)\|_{X(T)}\lesssim T \, \|\cherry_1\|_{L^\infty_T H^{-2\alpha}}\, .$$

\

\subsubsection{Proof of Proposition~\ref{last} when $d=3$}

\

\smallskip

\textit{In this situation, we pick $\ka:=4\al$ and $(p,q):=(2,6)$, so that one has 
$$X(T):=\mathcal{C}([0,T]; H^{-2\alpha}(\mathbb{R}^3))\cap L^2([0,T]; \cw^{-2\alpha,6}(\mathbb{R}^3))\cap L^{\frac{1}{\ka}}_T H^{-2\al+\ka}_\rho \, .$$
Observe here again that $(p,q)=(2,6)$ defines a Schr{\"o}dinger admissible pair.}

\

\noindent
\textbf{Bound on $S(\phi)$:}
We can repeat the arguments used for $d=1,2$ to assert that
$$\|S(\phi)\|_{X(T)}\lesssim \|\phi\|_{H^{-2\alpha}(\mathbb{R}^3)}.$$

\

\noindent
\textbf{Bound on $\cg(\rho^2 |v|^2)$:}
Let us first write, just as for $d=2$,
\begin{align*}
\|\cg(\rho^2 |v|^2)\|_{X(T)}&=\|\cg(\rho^2 |v|^2)\|_{L^{\infty}_T H^{-2\al}}+\|\cg(\rho^2 |v|^2)\|_{L^{2}_T \cw^{-2\al,6}}+\|\cg(\rho^2 |v|^2)\|_{L^{\frac{1}{\ka}}_T H^{-2\al+\ka}_\rho}\\
&\lesssim \| \cg(\rho^2 |v|^2)\|_{L^{\infty}_T H^{-2\al+\ka}}+\|\cg(\rho^2 |v|^2)\|_{L^{2}_T \cw^{-2\al+\ka,6}}\, ,
\end{align*}
and then apply Strichartz inequality (Lemma~\ref{lem:strichartz}) to obtain
\begin{equation}\label{appli-schro-dim-3}
\|\cg(\rho^2 |v|^2)\|_{X(T)}\lesssim \|\rho^2 |v|^2\|_{L^{2}_T \mathcal{W}^{-2\al+\ka,\frac65}} \, .
\end{equation}
By Lemma~\ref{lem:frac-leibniz}, one has, for every fixed $t\geq 0$,
$$
\|\rho^2 |v|^2(t,.)\|_{\mathcal{W}^{-2\al+\ka,\frac65}}\lesssim \| \rho v(t,.)\|_{H^{-2\al+\ka}}\| \rho v(t,.)\|_{L^3}\, .
$$
Besides, by using Lemma~\ref{lem:interpol}, one can check that
$$\| \rho v(t,.)\|_{L^3}\leq  \| \rho v(t,.)\|_{H^{-2\al+\ka}}^{\frac12} \| \rho v(t,.)\|_{\cw^{-2\al,6}}^{\frac12} \, .$$
Therefore, for every fixed $t\geq 0$,
\begin{align*}
\|\rho^2 |v|^2(t,.)\|_{\mathcal{W}^{-2\al+\ka,\frac65}}&\lesssim \| \rho v(t,.)\|_{H^{-2\al+\ka}}^{\frac32}\| \rho v(t,.)\|_{\cw^{-2\al,6}}^{\frac12} \\
&\lesssim \|  v(t,.)\|_{H^{-2\al+\ka}_\rho}^{\frac32}\| v(t,.)\|_{\cw^{-2\al,6}}^{\frac12}+\| v(t,.)\|_{H^{-2\al}}^{\frac32}\| v(t,.)\|_{\cw^{-2\al,6}}^{\frac12}   \, ,
\end{align*}
where we have used Lemma~\ref{lem:commut} to derive the second inequality.

\smallskip

This entails
\begin{align*}
&\int_0^T dt \, \|\rho^2 |v|^2(t,.)\|_{\mathcal{W}^{-2\al+\ka,\frac65}}^{2}\\
&\lesssim \bigg( \int_0^T dt \, \|  v(t,.)\|_{H^{-2\al+\ka}_\rho}^{6}\bigg)^{\frac{1}{2}} \bigg(\int_0^T dt \, \| v(t,.)\|_{\cw^{-2\al,6}}^{2}\bigg)^{\frac{1}{2}}+\|v\|_{X(T)}^{3} \int_0^T dt \, \|v_t\|_{\cw^{-2\al,6}} \\
&\lesssim \{T^{\frac12(1-24\al)}+T^{\frac12}\} \|v\|_{X(T)}^{4} \, ,
\end{align*}
which can obviously be recast as
$$
\|\rho^2 |v|^2\|_{L^{2}_T \mathcal{W}^{-2\al+\ka,\frac65}}  \lesssim  T^{\frac14(1-24\al)} \|v\|_{X(T)}^2 \, .
$$
Going back to~\eqref{appli-schro-dim-3}, we can conclude that
$$
\|\cg(\rho^2 |v|^2)\|_{X(T)} \lesssim T^{\frac14(1-24\al)} \|v\|_{X(T)}^2 \, .
$$

\

\noindent
\textbf{Bound on $\cg(\rho \overline{v} \cdot\luxo_1)$, $\cg(\rho v \cdot\overline{\luxo_1})$:}
Using the very same arguments as for $d=2$ (note indeed that $-2\al+\ka=2\al>\al$), we get that
$$\|\cg(\rho \overline{v} \cdot\luxo_1)\|_{X(T)}+\|\cg(\rho v \cdot\overline{\luxo_1})\|_{X(T)}\lesssim T^{1-\ka}\|\luxo_1\|_{L^\infty_T \mathcal{W}^{-\alpha,\infty}} \|v\|_{X(T)} \, .$$

\

\noindent
\textbf{Bound on $\cg(\cherry_1)$:}
Here again, the arguments are exactly the same as for $d=1,2$ and entail
$$\|\cg(\cherry_1)\|_{X(T)}\lesssim T \, \|\cherry_1\|_{L^\infty_T H^{-2\alpha}}\, .$$

\

\subsection{Proof of Theorem~\ref{resu1}}

\

\smallskip

With the statements of Proposition~\ref{sto1} and Theorem~\ref{thm:noregular} in hand, we are of course in the very same position as in Section~\ref{subsec:proof-main-theo-regu}, and so the desired properties follow again from an elementary combination of these results.

\section{Appendix}\label{appendix}

We gather here the proofs of two technical lemmas that have been used in the analysis of the rough case, namely Lemma~\ref{lem:loc-regu} and Lemma~\ref{lem:commut}.

\subsection{Proof of Lemma~\ref{lem:loc-regu}}\label{subsec:proof-loc-reg}

\

\smallskip

The argument is based on an interpolation procedure, combined with the following result:

\begin{proposition}[Constantin-Saut~\cite{constantin-saut}] \label{prop:constantin-saut}
Fix $d\geq 1$. Let $\rho:\R^d \to \R$ be of the form~\eqref{form-rho}, $0\leq\alpha\leq \frac12$ and $0\leq T\leq 1$. Assume that $\phi \in H^{-\alpha}(\mathbb{R}^d)$, $F \in L^1([0,T];H^{-\alpha}(\mathbb{R}^d))$, and consider the solution $u$ of the following inhomogeneous Schr\"{o}dinger equation on $\mathbb{R}^d$
\begin{equation*} 
\left\{
\begin{array}{l}
\imath \partial_t u(t,x)-\Delta u(t,x)= F(t,x) \, , \quad  t\in [0,T] \, , \, x\in \R^d \, ,\\
u_0=\phi \, .
\end{array}
\right.
\end{equation*}
Then it holds that 
\begin{equation}\label{constantin-saut}
\|u\|_{L^{2}_T H^{-\al+\frac12}_\rho}\lesssim \|\phi\|_{H^{-\alpha}}+\|F\|_{L^1_T H^{-\alpha}}\, ,
\end{equation}
where the proportional constant only depends on $\rho$ and $\al$.
\end{proposition}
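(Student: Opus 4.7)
The plan is to carry out a chain of reductions that turn the inhomogeneous $H^{-\alpha}$ statement into the classical one-dimensional Kato local smoothing estimate.

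\emph{Step 1 (Duhamel and Minkowski).} Writing $u(t)=S_t\phi-\imath\int_0^t S_{t-\tau}F(\tau)\,d\tau$, it is enough to establish the homogeneous bound $\|S_\cdot\phi\|_{L^2_T H^{-\alpha+\frac12}_\rho}\lesssim \|\phi\|_{H^{-\alpha}}$. Indeed, Minkowski's integral inequality applied to the Duhamel term then yields
\[
\Big\|\int_0^\cdot S_{\cdot-\tau}F(\tau)\,d\tau\Big\|_{L^2_T H^{-\alpha+\frac12}_\rho}\le\int_0^T \|S_{\cdot-\tau}F(\tau)\mathbf{1}_{[\tau,T]}(\cdot)\|_{L^2_T H^{-\alpha+\frac12}_\rho}\,d\tau\lesssim \|F\|_{L^1_T H^{-\alpha}}.
\]

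\emph{Step 2 (reduction to $\alpha=0$).} Since $S_t$ commutes with $(\id-\Delta)^{s/2}$ for every $s$, substituting $\psi:=(\id-\Delta)^{-\alpha/2}\phi$ (so that $\|\psi\|_{L^2}=\|\phi\|_{H^{-\alpha}}$) reduces the homogeneous bound to the Kato--Constantin--Saut local smoothing estimate
\[
\|\rho\,(\id-\Delta)^{1/4}S_\cdot\psi\|_{L^2_T L^2_x}\lesssim \|\psi\|_{L^2(\mathbb{R}^d)},\qquad \psi\in L^2(\mathbb{R}^d).
\]

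\emph{Step 3 (reduction to one dimension).} Exploiting the product form~\eqref{form-rho} of $\rho$ and the factorization $S_t=\bigotimes_{j=1}^d e^{it\partial_{x_j}^2}$, I will reduce the previous bound to a purely 1D estimate. Using a Littlewood--Paley decomposition $\psi=\sum_{\vec k\in\mathbb{N}^d}\psi_{\vec k}$ with $\widehat{\psi_{\vec k}}$ localized near $\{|\xi_j|\sim 2^{k_j}\}_j$, and grouping the blocks according to the index $j^*=j^*(\vec k)$ that maximizes $k_j$, the symbol $(1+|\xi|^2)^{1/4}$ is comparable to $(1+\xi_{j^*}^2)^{1/4}$ on the support of $\widehat{\psi_{\vec k}}$. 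Combined with standard almost-orthogonality arguments for the L--P blocks, this reduces the task to proving, for each $j\in\{1,\dots,d\}$,
\[
\|\rho\,(\id-\partial_{x_j}^2)^{1/4}S_\cdot\psi\|_{L^2_T L^2_x}\lesssim \|\psi\|_{L^2(\mathbb{R}^d)}.
\]

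\emph{Step 4 (1D Kato smoothing).} For each fixed $j$, bounding $|\rho(x)|^2\le C_\rho|\rho_j(x_j)|^2$, applying Plancherel in the $x^{(j)}:=(x_i)_{i\ne j}$ variables, and using that the propagators $e^{it\partial_{x_i}^2}$ ($i\ne j$) are $L^2$-unitary and that the factors $e^{-it|\xi^{(j)}|^2}$ are unit-modulus scalar phases, Fubini reduces matters to the purely 1D bound
\[
\|\rho_j(x)\,(\id-\partial_x^2)^{1/4}e^{it\partial_x^2}g\|_{L^2_{t,x}}\lesssim \|g\|_{L^2(\mathbb{R})},\qquad g\in L^2(\mathbb{R}).
\]
This in turn is a consequence of the classical Kato estimate $\||\partial_x|^{1/2}e^{it\partial_x^2}g\|_{L^\infty_x L^2_t}\lesssim \|g\|_{L^2}$: the high-frequency part matches directly, while the low-frequency part is trivial since $(\id-\partial_x^2)^{1/4}$ is bounded on $\{|\xi|\lesssim 1\}$ and the cutoff $\rho_j$ makes $L^2_{t,x}$ integrable. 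The Kato estimate itself follows from a one-line computation: writing $|\partial_x|^{1/2}e^{it\partial_x^2}g(x)$ as $\int|\xi|^{1/2}e^{ix\xi-it\xi^2}\hat g(\xi)\,d\xi$, splitting into $\xi>0$ and $\xi<0$, performing the change of variables $u=\xi^2$ so that the $t$-dependence becomes the Fourier phase $e^{-itu}$, and invoking Plancherel's theorem in $t$; the Jacobian $d\xi=du/(2\sqrt{u})$ exactly cancels the $|\xi|^{1/2}$ gain.

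\emph{Main obstacle.} The key difficulty is Step 3, namely the passage from the $d$-dimensional half-derivative $(\id-\Delta)^{1/4}$ to a directional one $(\id-\partial_{x_j}^2)^{1/4}$. It is precisely here that the product form~\eqref{form-rho} of the cutoff $\rho$ becomes essential, since it allows the cutoff to be distributed across the $d$ coordinate axes and then lets one invoke Fubini against a fixed direction $x_j$. Without this product structure, the argument would require a genuinely multidimensional analysis (of restriction-type flavor) rather than a sum of 1D Kato smoothing estimates. Ensuring the almost-orthogonality of the Littlewood--Paley blocks $\psi_{\vec k}$ is preserved after multiplication by $\rho$ and evolution by $S_t$ requires some care but follows by standard off-diagonal control.
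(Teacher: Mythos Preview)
The paper does not give its own proof of Proposition~\ref{prop:constantin-saut}: the result is simply quoted from Constantin--Saut~\cite{constantin-saut} and used as a black box in the proof of Lemma~\ref{lem:loc-regu}. There is therefore nothing to compare your argument against; what follows is an assessment of your proposal on its own merits.

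Your overall strategy is sound, and Steps~1, 2 and~4 are correct and standard. The reduction in Step~1 via Duhamel and Minkowski is routine; the reduction to $\alpha=0$ in Step~2 exploits the commutation of $S_t$ with $(\id-\Delta)^{s/2}$ in the obvious way; and the one-dimensional Kato computation in Step~4 is the textbook argument. The use of the product structure~\eqref{form-rho} in Step~4, via the bound $|\rho(x)|^2\le \big(\prod_{i\neq j}\|\rho_i\|_{L^\infty}^2\big)|\rho_j(x_j)|^2$ followed by Plancherel in $x^{(j)}$, is exactly the mechanism that makes the reduction to one dimension possible, and you have identified this correctly.

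The only place that deserves more care is Step~3, where you replace $(\id-\Delta)^{1/4}$ by a directional half-derivative $(\id-\partial_{x_{j^*}}^2)^{1/4}$ on each Littlewood--Paley block. The comparability of the two symbols on the support of $\widehat{\psi_{\vec k}}$ does \emph{not} by itself give comparability of the weighted norms $\|\rho\,\cdot\|_{L^2_{t,x}}$, because multiplication by $\rho$ sits in physical space. The clean way to close this step is to observe that the directional estimate of Step~4 applies to \emph{any} $L^2$ input: setting $\phi_{\vec k}:=(\id-\partial_{x_{j^*}}^2)^{-1/4}(\id-\Delta)^{1/4}\psi_{\vec k}$, one has $(\id-\partial_{x_{j^*}}^2)^{1/4}S_t\phi_{\vec k}=(\id-\Delta)^{1/4}S_t\psi_{\vec k}$ and $\|\phi_{\vec k}\|_{L^2}\le d^{1/4}\|\psi_{\vec k}\|_{L^2}$ by the symbol comparison on the block, so Step~4 gives $\|\rho(\id-\Delta)^{1/4}S_t\psi_{\vec k}\|_{L^2_{t,x}}\lesssim\|\psi_{\vec k}\|_{L^2}$ directly. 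The almost-orthogonality of the pieces $\rho(\id-\Delta)^{1/4}S_t\psi_{\vec k}$ in $L^2_{t,x}$ then follows, for each fixed $t$, from the rapid decay of $\widehat{\rho^2}$ and the frequency separation of distinct Littlewood--Paley blocks; this is indeed standard, as you say, but it is worth recording that it is this off-diagonal decay (and not just $L^2$-orthogonality of the $\psi_{\vec k}$) that is being used.

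In short: your proof is correct. It is a self-contained alternative to citing~\cite{constantin-saut}, and it makes explicit use of the product form~\eqref{form-rho}, whereas the original Constantin--Saut argument works for an arbitrary test function $\rho$ via a direct multidimensional computation. Your approach trades that generality for a more elementary reduction to the one-dimensional Kato estimate, which is entirely adequate for the paper's purposes.
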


\smallskip

\begin{proof}[Proof of Lemma~\ref{lem:loc-regu}]
For every fixed $t\in [0,T]$, we can use  the commutator estimate~\eqref{commut-1} and the fact that $\ka\leq 1$ to write
\begin{equation}\label{proof-loc-reg}
\|u(t,.)\|_{ H^{-\al+\ka}_\rho} \lesssim \|\rho \, u(t,.)\|_{ H^{-\al+\ka}} +\|u(t,.)\|_{ H^{-\al}} \, .
\end{equation}
Then, by a standard interpolation argument (see Lemma~\ref{lem:interpol}), we get that
\begin{align*}
\|\rho \, u(t,.)\|_{ H^{-\al+\ka}}&\lesssim\|\rho \, u(t,.)\|_{ H^{-\al}}^{1-2\ka}\|\rho \, u(t,.)\|_{ H^{-\al+\frac12}}^{2\ka}\\
&\lesssim \|u(t,.)\|_{ H^{-\al}}^{1-2\ka} \|u(t,.)\|_{ H^{-\al+\frac12}_\rho}^{2\ka}+\|u(t,.)\|_{ H^{-\al}} \, ,
\end{align*}
where we have used~\eqref{commut-1} to derive the second inequality.

\smallskip

By injecting the latter bound into~\eqref{proof-loc-reg}, we deduce that
$$\|u\|_{L^{\frac{1}{\ka}}_T H^{-\al+\ka}_\rho} \lesssim \|u\|_{ L^\infty_T H^{-\al}}^{1-2\ka} \|u\|_{L^2_T H^{-\al+\frac12}_\rho}^{2\ka}+\|u\|_{ L^\infty_T H^{-\al}} \, .$$
Using estimates~\eqref{strichartz} to bound $\|u\|_{ L^\infty_T H^{-\al}}$, and then estimate~\eqref{constantin-saut} to bound $\|u\|_{L^2_T H^{-\al+\frac12}_\rho}$, we get the desired inequality~\eqref{ineq-loc-reg}.
\end{proof}

\subsection{Proof of Lemma~\ref{lem:commut}}\label{subsec:proof-commut}

\

\smallskip

In order to establish this commutator estimate, we can essentially follow the arguments of Kato and Ponce in their proof of~\cite[Lemma X1]{kato-ponce}. However, \emph{in the specific case where $\rho$ is a test function} (which is the situation we would like to handle here), the bound~\eqref{improv-boun} is clearly sharper than the general estimate in~\cite[Lemma X1]{kato-ponce}. For this reason, let us briefly review the main modifications leading to~\eqref{improv-boun}.  The bound \eqref{improv-boun} also follows from the theory of pseudo-differential operators (see e.g. \cite{pseudo}) but the proof below only relies on the  classical estimate by Coifman and Meyer (\cite{coifman-meyer}).

\begin{proposition}\label{prop:coifman-meyer}
Fix $d\geq 1$ and consider a function $\sigma \in \mathcal{C}^{\infty}((\mathbb{R}^d \times \mathbb{R}^d)\backslash(0,0);\R)$ satisfying
\begin{equation}\label{bound-der-si}
|\partial^{\alpha}_{\xi}\partial^{\beta}_{\eta}\sigma(\xi,\eta)|\leq C_{\alpha,\beta}(|\xi|+|\eta|)^{-|\alpha|-|\beta|}
\end{equation}
for all $(\xi,\eta) \neq (0,0)$ and $\alpha,\beta \in \mathbb{N}^d.$ Let us denote by $B(\si)$ the bilinear operator defined for all test functions $\vp,\psi:\R^d \to\R$ as
$$B(\si)(\vp,\psi)(x)=\int_{\mathbb{R}^d\times\mathbb{R}^d}d\xi d\eta\, e^{i \langle x, \xi + \eta \rangle}\sigma(\xi,\eta)\cf \vp(\xi)\cf \psi(\eta) \, .$$
Then it holds that
$$\|B(\si)(\vp,\psi)\|_{L^2(\mathbb{R}^d)}\lesssim \|\vp\|_{L^{\infty}(\mathbb{R}^d)} \|\psi\|_{L^2(\mathbb{R}^d)}\, ,$$
where the proportional constant only depends on the coefficients $(C_{\alpha,\beta})_{\alpha,\beta \in \mathbb{N}^d}$ in~\eqref{bound-der-si}.
\end{proposition}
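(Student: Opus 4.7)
The plan is to prove the Coifman–Meyer bilinear multiplier estimate through a classical paraproduct decomposition of the symbol $\sigma$, reducing the bilinear operator to a superposition of shifted elementary paraproducts that can be bounded via Littlewood–Paley almost-orthogonality and, for one delicate piece, a Carleson-measure/BMO argument.

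First I would introduce a Littlewood–Paley partition of unity on $\R^d$: fix a radial $\Phi\in C^\infty_c(\R^d)$ with $\Phi\equiv 1$ on $\{|\xi|\leq 1\}$ and supported in $\{|\xi|\leq 2\}$, set $\Psi(\xi):=\Phi(\xi)-\Phi(2\xi)$, $\Psi_k:=\Psi(2^{-k}\cdot)$, $\Phi_k:=\Phi(2^{-k}\cdot)$. Use the identity $1=\Phi_0(\xi)+\sum_{k\geq 0}\Psi_k(\xi)$ in both variables to decompose $\sigma=\sigma^{LH}+\sigma^{HH}+\sigma^{HL}$, where (for instance) $\sigma^{LH}:=\sum_{k}\Phi_{k-3}(\xi)\Psi_{k}(\eta)\,\sigma$ is supported on $\{|\xi|\lesssim 2^{k-3},\,|\eta|\sim 2^k\}$, $\sigma^{HL}$ is its symmetric counterpart, and $\sigma^{HH}$ collects the remaining diagonal contributions. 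Each piece inherits the symbol estimates \eqref{bound-der-si} with uniform constants thanks to the homogeneity of those bounds.

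Next I would reduce each dyadic building block to shifts of Littlewood–Paley projections. Rescale $\sigma^{LH}_k(\xi,\eta):=\Phi_{k-3}(\xi)\Psi_k(\eta)\sigma(\xi,\eta)$ by $(\xi,\eta)\mapsto (2^k\xi,2^k\eta)$ to obtain a symbol supported on a fixed compact set and satisfying uniform $C^\infty$ bounds. Expanding it in a Fourier series on a fixed cube $[-C,C]^{2d}$ and undoing the rescaling yields the representation
\[
B(\sigma^{LH}_k)(\varphi,\psi)(x)=\sum_{(m,n)\in\mathbb{Z}^{2d}}c^{k}_{m,n}\,(S_{\leq k}\varphi)(x+2^{-k}m)\,(\Delta_k\psi)(x+2^{-k}n),
\]
where $S_{\leq k},\Delta_k$ are convolutions with Schwartz kernels adapted to the cutoffs $\Phi_{k-2}(\xi)$, $\Psi_k(\eta)$ (slight fattenings of those that are $\equiv 1$ on the support of $\sigma^{LH}_k$). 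Repeated integration by parts against the symbol estimates gives $|c^{k}_{m,n}|\lesssim_N(1+|m|+|n|)^{-N}$ for every $N$. The operators $B(\sigma^{HH}_k)$ and $B(\sigma^{HL}_k)$ admit entirely analogous representations.

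For the two easier regimes this reduction closes the argument. The output $B(\sigma^{LH}_k)(\varphi,\psi)$ has Fourier support in $\{|\zeta|\sim 2^k\}$, so Littlewood–Paley almost orthogonality gives $\|B(\sigma^{LH})\|_{L^2}^2\lesssim \sum_k\|B(\sigma^{LH}_k)\|_{L^2}^2$; placing $\varphi$ in $L^\infty$ and $\psi$ in $L^2$, one obtains $\|B(\sigma^{LH}_k)\|_{L^2}\lesssim \|\varphi\|_{L^\infty}\|\Delta_k\psi\|_{L^2}$, and $\sum_k\|\Delta_k\psi\|_{L^2}^2\lesssim \|\psi\|_{L^2}^2$ concludes. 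The diagonal piece $\sigma^{HH}$ is handled the same way since one of the two arguments still enters through an $\ell^2$-summable family of Littlewood–Paley blocks.

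The main obstacle is the high–low piece $\sigma^{HL}$, where the naive estimate would require $\sum_k\|\Delta_k\varphi\|_{L^\infty}^2\lesssim \|\varphi\|_{L^\infty}^2$, which is false. My plan is to absorb the shifts $(m,n)$ using the rapid decay of $c^k_{m,n}$ and recognize the remaining sum as a finite linear combination of shifted copies of the standard paraproduct $\Pi(\varphi,\psi):=\sum_k(\Delta_k\varphi)(S_{\leq k-3}\psi)$, and then invoke the classical bound $\|\Pi(\varphi,\psi)\|_{L^2}\lesssim \|\varphi\|_{BMO}\|\psi\|_{L^2}$. This in turn rests on the Carleson-measure characterization of $BMO$: the discrete measure $d\mu=\sum_k|\Delta_k\varphi(x)|^2 dx\otimes\delta_{2^{-k}}$ is Carleson with $\|\mu\|_{C}\lesssim \|\varphi\|_{BMO}^2$ (via John–Nirenberg), after which the Carleson embedding theorem yields $\int\sum_k|\Delta_k\varphi|^2|S_{\leq k-3}\psi|^2 dx\lesssim \|\varphi\|_{BMO}^2\|\psi\|_{L^2}^2$. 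Combined with the trivial embedding $L^\infty\hookrightarrow BMO$, this disposes of $\sigma^{HL}$ and completes the proof. The genuine conceptual hurdle is precisely this passage from the easy $L^\infty$ placement in the low-frequency slot to a $BMO$-type argument when $L^\infty$ sits in the high-frequency slot.
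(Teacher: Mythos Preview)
The paper does not prove this proposition at all: it is stated as the classical Coifman--Meyer estimate and simply cited from \cite{coifman-meyer}, then used as a black box in the proof of Lemma~\ref{lem:commut}. So there is no ``paper's own proof'' to compare against.

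Your sketch is the standard route to the Coifman--Meyer theorem: Littlewood--Paley paraproduct decomposition of the symbol into low-high, high-high, and high-low pieces, Fourier series expansion of each rescaled dyadic block to reduce to shifted elementary paraproducts with rapidly decaying coefficients, almost-orthogonality for the first two regimes, and the $BMO$/Carleson-measure argument (combined with $L^\infty\hookrightarrow BMO$) for the high-low piece. This is correct and is essentially how the result is proved in the original reference and in subsequent expositions. Since the paper treats the proposition as a quotable classical fact, your proposal goes well beyond what the paper does; it supplies a genuine proof where the paper supplies only a citation.
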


\smallskip

\begin{proof}[Proof of Lemma~\ref{lem:commut}]

The quantity under consideration can be written as
\begin{align*}
&\|(\id-\Delta)^{\frac{s}{2}}(\rho\cdot g)-\rho \cdot (\id-\Delta)^{\frac{s}{2}}(g)\|_{L^2(\mathbb{R}^d)}^2\\
&=c \int_{\R^d}d\xi\, \bigg| \int_{\R^d}d\eta \, \big[ \{1+|\xi|^2\}^{\frac{s}{2}}-\{1+|\eta|^2\}^{\frac{s}{2}}\big] \cf \rho (\xi-\eta) \cf g (\eta)\bigg|^2 \, .
\end{align*}
Let us introduce a smooth function $\Phi:\R\to [0,1]$ with support in $\big[-\frac13,\frac13\big]$ such that $\Phi=1$ on $\big[-\frac14,\frac14\big]$. Then bound the above integral as
\begin{equation*}
\int_{\R^d}d\xi\, \bigg| \int_{\R^d}d\eta \, \big[ \{1+|\xi|^2\}^{\frac{s}{2}}-\{1+|\eta|^2\}^{\frac{s}{2}}\big] \cf \rho (\xi-\eta) \cf g (\eta)\bigg|^2\lesssim   \cj_1+\cj_2 \, ,
\end{equation*}
where 
$$\cj_1:=\int_{\R^d}d\xi\, \bigg| \int_{\R^d}d\eta \, (1-\Phi)\Big( \frac{|\xi-\eta|}{|\eta|} \Big)\big[ \{1+|\xi|^2\}^{\frac{s}{2}}-\{1+|\eta|^2\}^{\frac{s}{2}}\big] \cf \rho (\xi-\eta) \cf g (\eta)\bigg|^2$$
and 
$$
\cj_2:=\int_{\R^d}d\xi\, \bigg| \int_{\R^d}d\eta \, \Phi\Big( \frac{|\xi-\eta|}{|\eta|} \Big)\big[ \{1+|\xi|^2\}^{\frac{s}{2}}-\{1+|\eta|^2\}^{\frac{s}{2}}\big] \cf \rho (\xi-\eta) \cf g (\eta)\bigg|^2 \, .
$$

\

\noindent
\textbf{Bound on $\cj_1$.} Using Cauchy-Schwarz inequality, we get first
\small
\begin{equation}\label{pro-comm-cs}
\cj_1 \leq  \|g\|_{H^{s-1}(\R^d)}^2 \, \bigg(\int_{\R^d\times \R^d}d\xi d\eta \, \big|(1-\Phi)\Big( \frac{|\xi-\eta|}{|\eta|} \Big)\big|^2\big| \cf \rho (\xi-\eta)\big|^2 \frac{\big| \{1+|\xi|^2\}^{\frac{s}{2}}-\{1+|\eta|^2\}^{\frac{s}{2}}\big|^2}{\{1+|\eta|^2\}^{s-1}} \bigg) \, . 
\end{equation} 
\normalsize
In order to show that the latter integral is indeed finite, observe that if $(1-\Phi)\big( |\xi-\eta|/|\eta| \big)\neq 0$, then $|\xi-\eta|\geq \frac14 |\eta|$, and so $|\xi-\eta|\geq \frac15 |\xi|$. Therefore, as $\rho$ is smooth and compactly-supported, one has, for all $\la,\be \geq 0$,
$$\big|(1-\Phi)\Big( \frac{|\xi-\eta|}{|\eta|} \Big)\big|^2\big| \cf \rho (\xi-\eta)\big|^2 \leq c_{\rho,\la,\be}\{1+|\xi|^2\}^{-\la}\{1+|\eta|^2\}^{-\be} \, ,$$
and the finiteness of the integral in~\eqref{pro-comm-cs} immediately follows.

\

\noindent
\textbf{Bound on $\cj_2$.} By Fourier isometry, we can write this quantity as $\cj_2=c\, \big\| F\|_{L^2(\R^d)}^2$, with 
$$F(x):=\int_{\R^d \times \R^d}d\xi d\eta \, e^{\imath \langle x,\xi+\eta\rangle} \Phi\Big(\frac{|\xi|}{|\eta|}\Big) \big[ \{1+|\xi+\eta|^2\}^{\frac{s}{2}}-\{1+|\eta|^2\}^{\frac{s}{2}}\big] \cf \rho (\xi) \cf g (\eta) \, .$$
Then
\begin{align*}
&\Phi\Big(\frac{|\xi|}{|\eta|}\Big) \big[ \{1+|\xi+\eta|^2\}^{\frac{s}{2}}-\{1+|\eta|^2\}^{\frac{s}{2}}\big] \cf \rho (\xi) \cf g (\eta)\\
&=\Phi\Big(\frac{|\xi|}{|\eta|}\Big) \{1+|\eta|^2\}^{\frac{s}{2}}\bigg[ \bigg(1+\frac{\langle \xi,\xi+2\eta\rangle}{1+|\eta|^2}\bigg)^{\frac{s}{2}}-1\bigg] \cf \rho (\xi) \cf g (\eta)\\
&=\Phi\Big(\frac{|\xi|}{|\eta|}\Big) \{1+|\eta|^2\}^{\frac{1}{2}}\bigg[ \bigg(1+\frac{\langle \xi,\xi+2\eta\rangle}{1+|\eta|^2}\bigg)^{\frac{s}{2}}-1\bigg] \cf \rho (\xi) \cf\big((\id-\Delta)^{\frac{s-1}{2}} g\big) (\eta) \, .
\end{align*}
At this point, observe that if $\Phi\Big(\frac{|\xi|}{|\eta|}\Big)\neq 0$, then $|\xi|\leq \frac13 |\eta|$, and so $|\langle \xi,\xi+2\eta\rangle | \leq \frac79 |\eta|^2$. Therefore, we can rely on the pointwise expansion
$$\Phi\Big(\frac{|\xi|}{|\eta|}\Big) \{1+|\eta|^2\}^{\frac{1}{2}}\bigg[ \bigg(1+\frac{\langle \xi,\xi+2\eta\rangle}{1+|\eta|^2}\bigg)^{\frac{s}{2}}-1\bigg] =\sum_{k\geq 1}a_k(s) \Phi\Big(\frac{|\xi|}{|\eta|}\Big)\{1+|\eta|^2\}^{\frac{1}{2}-k}\langle \xi,\xi+2\eta\rangle^k \, ,$$
where $a_k(s):=\frac{s(s-1)\cdots (s-k+1)}{k!}$.

\smallskip

Since $s>0$ and $\rho,g$ are assumed to be test functions, one has
\begin{align*}
&\int_{\R^d \times \R^d}d\xi d\eta \, \sum_{k\geq 1}\Big| a_k(s) \Phi\Big(\frac{|\xi|}{|\eta|}\Big)\{1+|\eta|^2\}^{\frac{1}{2}-k}\langle \xi,\xi+2\eta\rangle^k \cf \rho (\xi) \cf\big((\id-\Delta)^{\frac{s-1}{2}} g\big) (\eta) \Big|\\
&\lesssim \bigg(\sum_{k\geq 1}\big| a_k(s)\big|\bigg) \int_{\R^d \times \R^d}d\xi d\eta \, \{1+|\eta|^2\}^{\frac{1}{2}}  \big|\cf \rho (\xi)\big| \big|\cf\big((\id-\Delta)^{\frac{s-1}{2}} g\big) (\eta)\big| \ < \ \infty \, ,
\end{align*}
and accordingly we can write
\small
\begin{align*}
&F(x)=\sum_{k\geq 1}a_k(s) \int_{\R^d \times \R^d}d\xi d\eta \, e^{\imath \langle x,\xi+\eta\rangle}\Phi\Big(\frac{|\xi|}{|\eta|}\Big)\{1+|\eta|^2\}^{\frac{1}{2}-k}\langle \xi,\xi+2\eta\rangle^k   \cf \rho (\xi) \cf\big((\id-\Delta)^{\frac{s-1}{2}} g\big) (\eta)\\
&=\sum_{k\geq 1}a_k(s)\sum_{i=1}^d  \int_{\R^d \times \R^d}d\xi d\eta \,e^{\imath \langle x,\xi+\eta\rangle} \Phi\Big(\frac{|\xi|}{|\eta|}\Big)\{1+|\eta|^2\}^{\frac{1}{2}-k}\langle \xi,\xi+2\eta\rangle^{k-1}(\xi_i+2\eta_i) \big[\xi_i\cf \rho (\xi)\big] \cf\big((\id-\Delta)^{\frac{s-1}{2}} g\big) (\eta)\, .
\end{align*}
\normalsize
Using the notation of Proposition~\ref{prop:coifman-meyer} and the fact that $\xi_i\cf \rho (\xi)=\imath \cf\big(\partial_{x_i}\rho\big) (\xi)$, the latter identity can be rephrased as
\begin{equation*}
F(x)=\imath \sum_{k\geq 1}a_k(s)\sum_{i=1}^d  B(\si_{k,i})\big(\partial_{x_i}\rho,(\id-\Delta)^{\frac{s-1}{2}} g\big) \, ,
\end{equation*}
with
$$\si_{k,i}(\xi,\eta):=\Phi\Big(\frac{|\xi|}{|\eta|}\Big)\{1+|\eta|^2\}^{\frac{1}{2}-k}\langle \xi,\xi+2\eta\rangle^{k-1}(\xi_i+2\eta_i) \, .$$

\smallskip

It is not hard to check that for all $k\geq 1$ and $1\leq i\leq d$, the function $\si_{k,i}$ satisfies condition~\eqref{bound-der-si} with coefficients $(C_{\alpha,\beta})_{\alpha,\beta \in \mathbb{N}^d}$ independent of $k$ and $i$. Consequently, we are in a position to apply Proposition~\ref{prop:coifman-meyer} and conclude that
$$\|F\|_{L^2(\R^d)} \lesssim \sum_{k\geq 1}\big|a_k(s)\big|  \sum_{i=1}^d \|\partial_{x_i}\rho\|_{L^\infty(\R^d)} \|g\|_{H^{s-1}(\R^d)} \lesssim \|g\|_{H^{s-1}(\R^d)} \, .$$

\end{proof}




\begin{thebibliography}{99}

\bibitem{pseudo} S. Alinhac and P. G\'{e}rard: Op\'{e}rateurs pseudodiff\'{e}rentiels et th\'{e}or\`{e}me de Nash-Moser-EDP Sciences (1991).

\smallskip

\bibitem{Beje-DeSilva}
I. Bejenaru and D. De Silva: Low regularity solutions for a 2D quadratic nonlinear Schr\"odinger equation. {\it Trans. Amer. Math. Soc.} {\bf 360} (2008), no. 11, 5805-5830.

\smallskip

\bibitem{cate-chouk}
R. Catellier and K. Chouk: Paracontrolled distributions and the 3-dimensional stochastic quantization equation. {\it Ann. Probab.} {\bf 46} (2018), no. 5, 2621-2679.

\smallskip

\bibitem{caz} 
T. Cazenave: Semilinear Schr\"{o}dinger equations. Courant Lecture Notes in Mathematics, American Mathematical Society, Courant Institute of Mathematical Sciences, 2003.

\smallskip

\bibitem{CazWei} 
T. Cazenave and F. Weissler: The Cauchy problem for the critical nonlinear Schr\"odinger equation in $H^s$. {\it Nonlinear Anal.} {\bf 14} (1990), no. 10, 807-836.

\smallskip

\bibitem{coifman-meyer} 
R. R. Coifman and Y. Meyer: Au del\`{a} des Op\'{e}rateurs pseudodiff\'{e}rentiels. {\it Ast\'{e}risque} {\bf 57} (1978), Soci\'{e}t\'{e} Math\'{e}matique de France.

\smallskip
 
\bibitem{constantin-saut} 
P. Constantin and J.-C. Saut: Local smoothing properties of dispersive equations. \textit{J. Amer. Math.} {\bf 1} (1998), 413-439.

 

\smallskip

\bibitem{debou-debu-4}
A. de Bouard and A. Debussche: The stochastic nonlinear Schr{\"o}dinger equation in $H^1$. {\it  Stoch. Anal. Appl.} {\bf 21} (2003), no. 1, 97-126.

\smallskip

\bibitem{debou-debu-6}
A. de Bouard and A. Debussche: Blow-up for the supercritical stochastic nonlinear Schr{\"o}dinger equation with multiplicative noise. {\it Ann. Probab.} {\bf 33} (2005), no. 3, 1078-1110.



\smallskip

\bibitem{debou-debu-8}
A. de Bouard and A. Debussche: The nonlinear Schr{\"o}dinger equation with white noise dispersion. {\it J. Funct. Anal.} {\bf 259} (2010), pp. 1300-1321.



\smallskip

\bibitem{debu-weber}
A. Debussche and H. Weber: The Schr{\"o}dinger equation with spatial white noise potential. {\it Electron. J. Probab.} {\bf 23} (2018), 1-16.

\smallskip

\bibitem{debu-martin}
A. Debussche and J. Martin: Solution to the stochastic Schr{\"o}dinger equation on the full space. {\it Nonlinearity} {\bf 32} (2019), no. 4.



\smallskip

\bibitem{Co-De-Ke-Sta}
J. Colliander, J.-M. Delort, C. Kenig, and G. Staffilani: Bilinear estimates and applications to 2D NLS. {\it Trans. Amer. Math. Soc.} {\bf 353} (2001), no. 8, 3307-3325.

\smallskip

\bibitem{deya} 
A. Deya: On a modelled rough heat equation. {\it Probab. Theory Related Fields} {\bf 166} (2016) no. 1, 1-65. 

\smallskip
 
\bibitem{deya-wave} 
A. Deya: A nonlinear wave equation with fractional perturbation. {\it Ann. Probab.} {\bf 47} (2019), no. 3, 1775-1810.

\smallskip

\bibitem{deya-wave-2} 
A. Deya: On a non-linear 2D fractional equation. {\it Ann. Inst. H. Poincar{\'e} Probab. Statist.} {\bf 56} (2020), no. 1, 477-501.

\smallskip

\bibitem{oh-forlano-wang}
J. Forlano, T. Oh, and Y. Wang: Stochastic cubic nonlinear Schr{\"o}dinger equation with almost space-time white noise. {\it J. Aust. Math. Soc.} (2019), 1-24. 

\smallskip

\bibitem{GRR} 
A. M. Garsia, E. Rodemich, H. Rumsey Jr., and M. Rosenblatt: A Real Variable Lemma and the Continuity of Paths of Some Gaussian Processes, Indiana University Mathematics Journal, Vol. 20, No. 6 (December, 1970), pp. 565-578.

\smallskip

\bibitem{GinibreVelo}  
J. Ginibre and J. Velo:   On a class of nonlinear Schr\"odinger equations. I. The Cauchy problem, general case. {\it J. Funct. Anal.} {\bf 32} (1979), no. 1, 1-32.

\smallskip

\bibitem{gubi-imke-perk}
M. Gubinelli, P. Imkeller, and N. Perkowski: Paracontrolled distributions and singular PDEs. {\it Forum Math. Pi} {\bf 3} (2015), e6, 75pp.

\smallskip

\bibitem{gubi-koch-oh} 
M. Gubinelli, H. Koch, and T. Oh: Renormalization of the two-dimensional stochastic nonlinear wave equation. {\it Trans. Amer. Math. Soc.} {\bf 370} (2018), 7335-7359.

\smallskip

\bibitem{gubi-koch-oh-2}
M. Gubinelli, H. Koch, and T. Oh: Paracontrolled approach to the three-dimensional stochastic nonlinear wave equation with quadratic nonlinearity. {\it Arxiv preprint}.

\smallskip

\bibitem{hai-14}
M. Hairer: A theory of regularity structures. {\it Invent. Math.} {\bf 198} (2014), no. 2, 269-504.

\smallskip

\bibitem{hairer-shen}  
M. Hairer and H. Shen: The dynamical sine-Gordon model. {\it Comm. Math. Phys.} {\bf 341} (2016), no. 3, 933-989.

\smallskip

\bibitem{Iwa-Oga}  
T. Iwabuchi and T. Ogawa: Ill-posedness for the nonlinear Schr\"odinger equation with quadratic non-linearity in low dimensions. {\it Trans. Amer. Math. Soc.} {\bf 367} (2015), no. 4, 2613-2630. 

\smallskip

\bibitem{kato-ponce} 
T. Kato and G. Ponce: Commutator estimates and the Euler and Navier-Stokes equations. {\it Commun. Pure Appl. Math.} {\bf 41} (1988), 891-907.



\smallskip

\bibitem{wick} 
D. Nualart: The Malliavin Calculus and Related Topics. Probability and its Applications (New-York). Springer-Verlag, Berlin, 2006.

\smallskip

\bibitem{oh-okamoto}
T. Oh and M. Okamoto: On the stochastic nonlinear Schr{\"o}dinger equations at critical regularities. {\it Stoch. Partial Differ. Equ. Anal. Comput.} (2020), 26 pages.


\smallskip

\bibitem{oh-okamoto-2}
T. Oh and M. Okamoto: Comparing the stochastic nonlinear wave and heat equations: a case study. {\it Arxiv preprint}.

\smallskip

\bibitem{oh-popovnicu-wang}
T. Oh, O. Pocovnicu, and Y. Wang: On the stochastic nonlinear Schr{\"o}dinger equations with non-smooth additive noise. To appear in {\it Kyoto J. Math.}



\smallskip

\bibitem{oh-tzvetkov}
T. Oh and N. Tzvetkov: Quasi-invariant Gaussian measures for the cubic fourth order nonlinear Schr{\"o}dinger equation. {\it Probab. Theory Related Fields} {\bf 169} (2017), 1121-1168. 

\smallskip

\bibitem{prod} 
T. Runst and W. Sickel: Sobolev Spaces of Fractional Order, Nemytskij Operators, and Nonlinear Partial Differential Equations. Gruyter Series in Nonlinear Analysis and Applications 3, Berlin, 1996.

\smallskip

\bibitem{harmo} 
G. Samoradnitsky and M.S. Taqqu: Stable Non-Gaussian Random Processes: Stochastic Models with Infinite Variance. Chapman and Hall/CRC (June, 1994).

\smallskip

\bibitem{Tao} 
T. Tao: Multilinear weighted convolution of $L^2$-functions, and applications to nonlinear dispersive equations.  {\it Amer. J. Math.} {\bf 123} (2001), no. 5, 839-908. 

\smallskip

\bibitem{Taylor}
M. E. Taylor: Tools for PDE. Pseudodifferential operators, paradifferential operators, and layer potentials.
Mathematical Surveys and Monographs 81. American Mathematical Society, providence, RI, 2000.


\bigskip

\bigskip

















\end{thebibliography}
\end{document}